%final version for GAFA, on August 27, 2013
\documentclass[11pt]{amsart}
\usepackage{fullpage}
\usepackage{amsmath,amssymb,amsfonts}
\usepackage{enumerate}
\usepackage{amscd, amssymb, latexsym, amsmath, amscd}
\setcounter{MaxMatrixCols}{30}%
\usepackage{graphicx}
\usepackage{aurical}
\usepackage[T1]{fontenc}
\usepackage{mathrsfs}
\usepackage{url}
\usepackage{youngtab}
\usepackage[small,nohug,heads=vee]{diagrams}
\diagramstyle[labelstyle=\scriptstyle]
\numberwithin{equation}{section}

\theoremstyle{plain}
\newtheorem{theorem}{Theorem}[section]
\newtheorem*{thm}{Theorem}
\newtheorem{proposition}[theorem]{Proposition}
\newtheorem{lemma}[theorem]{Lemma}

\newtheorem{corollary}[theorem]{Corollary}

\theoremstyle{definition}
\newtheorem{definition}[theorem]{Definition}
\newtheorem*{dfn}{Definition}

\newtheorem{example}[theorem]{Example}

\theoremstyle{remark}
\newtheorem{remark}[theorem]{Remark}
\newtheorem{remarks}[theorem]{Remarks}

\makeatletter

\newcommand{\Rmnum}[1]{\expandafter\@slowromancap\romannumeral #1@}
\makeatother

\newcommand{\Z}{\mathbb{Z}}

\newcommand{\N}{\mathbb{N}}
\newcommand{\g}{\mathfrak{g}}
\newcommand{\n}{\mathfrak{n}}

\newcommand{\p}{\mathfrak{p}}

\newcommand{\m}{\mathfrak{m}}
\newcommand{\e}{\mathfrak{e}}
\newcommand{\f}{\mathfrak{f}}
\newcommand{\V}{\mathscr{V}}
\newcommand{\D}{\mathscr{D}}
\renewcommand{\S}{\mathscr{S}}
\newcommand{\Sch}{\mathscr{C}}
\newcommand{\Y}{\mathscr{Y}}
\renewcommand{\u}{\mathfrak{u}}
\newcommand{\set}[2]{ \left\{ {#1} \, \left| \, {#2} \right\}\right.}
\newcommand{\A}{\mathcal{A}}
\newcommand{\X}{\mathcal{X}}
\renewcommand{\Im}{\operatorname{Im}}

\newcommand{\HH}{\mathscr{H}}

\newcommand{\z}{\mathfrak{z}}
\renewcommand{\sl}{\mathfrak{sl}}
\newcommand{\R}{\mathbb{R}}
\newcommand{\C}{\mathbb{C}}
\renewcommand{\H}{\mathcal{H}}
\newcommand{\Ind}{\operatorname{Ind}}
\newcommand{\Tr}{\operatorname{Tr}}
\newcommand{\Orb}{\mathcal{O}}
\renewcommand{\k}{\mbox{\Fontauri k}}
\newcommand{\Ad}{\operatorname{Ad}}
\newcommand{\ad}{\operatorname{ad}}
\newcommand{\Hom}{\operatorname{Hom}}

\newcommand{\End}{\operatorname{End}}
\newcommand{\supp}{\operatorname{supp}}
\newcommand{\Stab}{\mbox{Stab}}
\newcommand{\Lie}{\operatorname{Lie}}
\newcommand{\GL}{\operatorname{GL}}
\newcommand{\Sp}{\operatorname{Sp}}
\newcommand{\Mp}{\operatorname{Mp}}
\renewcommand{\a}{\mathfrak{a}}
\newcommand{\Wh}{\operatorname{Wh}}
\newcommand{\Span}{\operatorname{Span}}
\newcommand{\WF}{\operatorname{WF}}
\newcommand{\AC}{\operatorname{AC}}
\newcommand{\Lin}{\operatorname{Lin}}
\newcommand{\Diff}{\operatorname{Diff}}
\newcommand{\Max}{\operatorname{Max}}
\newcommand{\Her}{\operatorname{Her}}
\newcommand{\vsp}{{\vspace{0.2in}}}

\title[Local theta lifting of generalized Whittaker models]{Local theta lifting of generalized Whittaker models associated to nilpotent orbits}

\author [R. Gomez] {Raul Gomez}
\address{Department of Mathematics\\
National University of Singapore\\
Block S17, 10 Lower Kent Ridge Road\\
Singapore 119076} 
\curraddr{Department of Mathematics, 593 Malott Hall, Cornell University, Ithaca, NY14853, USA}
\email{rg558@cornell.edu}

\author [C.-B. Zhu] {Chen-Bo Zhu}
\address{Department of Mathematics\\
National University of Singapore\\
Block S17, 10 Lower Kent Ridge Road\\
Singapore 119076} \email{matzhucb@nus.edu.sg}

\begin{document}

\subjclass[2000]{22E46 (Primary)}
\keywords{Local theta lifting, generalized Whittaker models, nilpotent orbits}

\thanks{Both authors are supported by MOE2010-T2-2-113}

\begin{abstract} Let $(G,\tilde{G})$ be a reductive dual pair over a local field $\mbox{\Fontauri k}$ of characteristic 0, and denote by $V$ and $\tilde{V}$ the standard modules
of $G$ and $\tilde{G}$, respectively. Consider the set $\Max \Hom (V,\tilde{V})$ of full rank elements in $\Hom(V,\tilde{V})$, and the nilpotent orbit correspondence $\Orb \subset \g$ and $\Theta (\Orb)\subset \tilde{\g}$ induced by elements of $\Max \Hom (V,\tilde{V})$ via the moment maps. Let $(\pi,\mathscr{V})$ be a smooth
irreducible representation of $G$. We show that there is a correspondence of the generalized Whittaker models of $\pi $ of type $\Orb$ and
of $\Theta (\pi)$ of type $\Theta (\Orb)$, where $\Theta (\pi)$ is the full theta lift of $\pi $. When $(G,\tilde{G})$ is in the stable range with $G$ the smaller member, every nilpotent orbit $\Orb \subset \g$ is in the image of the moment map from $\Max \Hom (V,\tilde{V})$. In this case, and for $\k$ non-Archimedean, the result has been previously obtained by M\oe{}glin in a different approach.
\end{abstract}

\maketitle

\tableofcontents

\section{Introduction and main result}

Let $\k$ be a local field of characteristic 0, for which we fix
a non-trivial unitary character $\psi$. Let $G$ be a reductive group
over $\k$, $\g$ its Lie algebra, on which we fix an
$\Ad G$-invariant non-degenerate bilinear form $\kappa$. Let
$\gamma=\{X,H,Y\}\subset \g$ be an $\sl_{2}$-triple, that is
\[
[H,X]=2X,\qquad [H,Y]=-2Y, \qquad [X,Y]=H.
\]
Set $\g_{j}=\set{Z\in \g}{\ad(H)Z=jZ}$, for $j\in \Z$. Then, from standard
$\sl_{2}$-theory, we have a finite direct sum $\g=\oplus_{j\in \Z}\g_{j}$.
Define the Lie subalgebras $\u=\oplus_{j\leq -2}
\g_{j}$, $\n=\oplus_{j\leq -1} \g_{j}$, $\p=\oplus_{j\leq 0} \g_{j}$
and $\m=\g_{0}$. Let $U$, $N$, $P$, and $M$ be the corresponding
subgroups of $G$. Thus $U=\exp \u$, $N=\exp \n$, $P=\set{p\in G}{\Ad(p)\p
\subset \p}$ and $M=\set{m\in G}{\mbox{$\Ad(m)H=H$}}$.
Let
\begin{equation}
\label{defchi}
\chi_{\gamma}(\exp Z)=\psi (\kappa(X,Z)), \ \ \ \ \forall \ Z\in \u.
\end{equation}
Since $\kappa(X,[\u,\u])=0$, $\chi_{\gamma}$ defines a character on
$U$. Let $M_{X}=\set{m\in M}{\mbox{$\Ad(m)X=X$}}$. Then it is known
\cite[Section 3.4]{CM92} that
\[
M_{X}=G_{\gamma}:=\set{g\in G}{\mbox{$\Ad(g)X=X$, $\Ad(g)Y=Y$, $\Ad(g)H=H$}}.
\]
In particular $M_{X}$ is reductive. For the moment assume that $\g_{-1}\neq 0$, or equivalently $\mathfrak{u}\subsetneq \mathfrak{n}$. In this case
$\ad(X)|_{\g_{-1}}:\g_{-1}\longrightarrow \g_{1}$ is an isomorphism,
and we may define a symplectic structure on $\g_{-1}$ by setting
\begin{equation}
\label{defsymg-1}
\kappa_{-1}(S,T)=\kappa(\ad(X)S,T)=\kappa(X,[S,T]), \qquad \mbox{for all  $S$, $T\in \g_{-1}$}.
\end{equation}
We may exhibit a canonical surjective group homomorphism from $N$ to the associated Heisenberg group $\H$ which maps $\exp Z$ to $\kappa(X,Z)$ in the center of $\H$, for $Z\in \u$. Then, according to the Stone-von
Neumann theorem, there exists a unique, up to equivalence, smooth
irreducible (unitarizable) representation
$(\rho_{\chi_{\gamma}},\S_{\chi_{\gamma}})$ of $N$ such that $U$
acts by the character $\chi_{\gamma}$. See Section \ref{subsec:GWM} for details. Since $M_{X}$ preserves
$\gamma $ and thus the symplectic form $\kappa_{-1}$, it is well-known \cite{Weil} that there exists a central
cover of $M_{X}$, to be denoted by $M_{\chi_{\gamma}}$, and a
representation of a semi-direct product $M_{\chi_{\gamma}}\ltimes N$
on $\S_{\chi_{\gamma}}$ which extends the representation
$\rho_{\chi_{\gamma}}$ of $N$. We refer to the representation of
$M_{\chi_{\gamma}}\ltimes N$ on $\S_{\chi_{\gamma}}$ as the smooth
oscillator-Heisenberg (or Weil) representation.
If $\g_{-1}=0$, then we define
$M_{\chi_{\gamma}}$ to be just $M_{X}$. For notational convenience, we also denote by $(\rho_{\chi_{\gamma}},\S_{\chi_{\gamma}})$ the $1$-dimensional
representation of $N=U$ given by the character $\chi_{\gamma}$. We may again view $(\rho_{\chi_{\gamma}},\S_{\chi_{\gamma}})$ as a representation of
$M_{X}\ltimes N$, with the trivial $M_{X}$ action.

In this article, a smooth representation of a reductive group over
$\k$ will mean a smooth representation in the usual sense for $\k$
non-Archimedean, namely it is locally constant, and a
Casselman-Wallach representation for $\k$ Archimedean. The reader
may consult \cite[Chapter 11]{Wa92} for more information about
Casselman-Wallach representations.

\begin{dfn} Fix an $\sl_{2}$-triple $\gamma=\{X,H,Y\}\subset \g$.
Let $(\pi,\mathscr{V})$ be a smooth representation of $G$. We define
the \emph{space of generalized Whittaker models of $\pi$ associated
to $\gamma $} to be
\begin{equation}
\label{defwhittaker}
\Wh_{\gamma}(\pi)=\Hom_{N}(\mathscr{V},\S_{\chi_{\gamma}}).
\end{equation}
Note that $\Wh_{\gamma}(\pi)$ is naturally an
$M_{\chi_{\gamma}}$-module:
\[(m\cdot \lambda )(v)=m\cdot \lambda (\bar{m}^{-1}\cdot v), \ \ m\in M_{\chi_{\gamma}}, \
\lambda \in \Hom_{N}(\mathscr{V},\S_{\chi_{\gamma}}), \ v\in \mathscr{V}, \]
where $m\mapsto \bar{m}$ is the covering map from $M_{\chi_{\gamma}}$ onto $M_{X}$.
\end{dfn}

From the well-known results of Jacobson-Morozov and Kostant \cite[Chapter 3]{CM92}, the map $\gamma=\{X,H,Y\}\mapsto \Orb =\Ad G
\cdot X$ yields a 1-1 correspondence between
\[
\left\{\begin{array}{c} \mbox{$\Ad G$ conjugacy classes of}\\
\mbox{$\sl_{2}$-triples in $\g$}\end{array}\right\}
\longleftrightarrow \left\{\begin{array}{c} \mbox{Nonzero nilpotent $\Ad G$-orbits}\\
\mbox{$\Orb \subset \g$}\end{array}\right\}.
\]
If the conjugacy class of an $\sl_{2}$-triple $\gamma$ corresponds
to a nilpotent orbit $\Orb\subset \g$  we say that $\gamma$ is an
\emph{$\sl_{2}$-triple of type $\Orb$.} It is clear that given two
conjugate $\sl_{2}$-triples $\gamma$, $\gamma '$, there exists an
isomorphism $\phi:\Wh_{\gamma}(\pi)\longrightarrow
\Wh_{\gamma '}(\pi)$ that intertwines the action of
$M_{\chi_{\gamma}}$ and $M_{\chi_{\gamma '}}$. For this reason,
sometimes we will abuse notation and denote
$\Wh_{\gamma}(\pi)$ just by $\Wh_{\Orb}(\pi)$ and we will
call it the \emph{space of generalized Whittaker models associated
to $\Orb$}, or the \emph{space of generalized Whittaker models of
type $\Orb$}. Note that this definition is (essentially) equivalent
to the one given in \cite{Ya86, MW87}.

The study of Whittaker and generalized Whittaker models for
representations of reductive groups over local fields evolved in
connection with the theory of automorphic forms (via their Fourier
coefficients), and has found important applications in both areas.
See for example \cite{Sh74,NPS73,Ko78,Ka85,Ya86, WaJI}, and a recent
article of Jiang \cite{Ji07} discussing its role in a general theory
of periods of automorphic forms. From the point of view of
representation theory, the space of generalized Whittaker models may
be viewed as one kind of nilpotent invariant associated to smooth
representations.
Another nilpotent invariant is the wave front cycle:
\[
\WF(\pi)=\sum_{\scriptstyle \begin{array}{c} \Orb\subset{\g}\\ \mbox{nilpotent}\end{array}} c_{\Orb}(\pi)[\Orb],
\]
defined by Harish-Chandra in the non-Archimedean case  (\cite{HC78}; see also \cite{Hei85,Pr91b}) and by Howe and Barbasch-Vogan in the Archimedean case (\cite{HoWF,BV80}; see also
\cite{Ro95,SV00}). For $\k$ non-Archimedean, M\oe{}glin and Waldspurger \cite{MW87} have established that $\WF(\pi)$ completely controls the spaces of generalized Whittaker models of interest, namely, if $\Orb$ is a nilpotent orbit which is maximal in the wave front set of $\pi$ \cite{HoWF}, then
\[
c_{\Orb}(\pi)=\dim \Wh_{\Orb}(\pi).
\]
For $\k$ Archimedean, the corresponding phenomenon is not yet (fully) understood, except for the representations with the largest Gelfand-Kirillov dimension \cite{Vo78,Ma92} and unitary highest weight modules \cite{Ya01}. For the latter, the wave front set was computed earlier in \cite{Pr91a}.

On the other hand, when $\k$ is Archimedean, there is another nilpotent invariant defined by Vogan \cite{Vo91}, which is called the associated cycle:
\[
\AC(\pi) =\sum_{\scriptstyle \begin{array}{c} \Orb\subset{\mathfrak{s}}\\ \mbox{nilpotent}\end{array}} d_{\Orb}(\pi)[\Orb].
\]
Here $\g_{\C}=\mathfrak{k}\oplus
\mathfrak{s}$ is the complexified Cartan decomposition. By the work of Schmid and Vilonen \cite{SV00}, we know that the wave front cycle and the associate cycle determine
each other through the Kostant-Sekiguchi correspondence \cite{Se87}.

\vsp

Now we come to the other part of the title regarding local theta lifting. Let $(G,\tilde{G})$ be a reductive dual pair in
the sense of Howe \cite{Ho79}. Let $(\omega , \Y )$ be the smooth oscillator representation
associated to the dual pair $(G,\tilde{G})$ and to the character
$\psi$ of $\k$. It is well-known that $(\omega , \Y)$ yields a representation
of $G\times \tilde{G}$ except when $G$ is the isometry group of an
odd dimensional quadratic space, in which case it is a representation of a double cover of $\tilde{G}$. (In general one may need to tensor with a genuine character of the relevant double cover to achieve this.) In the exceptional case, we shall simply redefine $\tilde{G}$ to be the induced double
cover, so as to simplify notation. Also in this case we shall only consider genuine representations of $\tilde{G}$,
namely those representations such that the non-trivial element of the covering map acts by $-1$. For all other cases, representations are called genuine by convention.

Let $(\pi,\mathscr{V})$ be a smooth irreducible genuine representation of
$G$, and consider the maximal $\pi$-isotypic quotient of
$\Y$, called the Howe quotient of $\pi $. If it is non-zero, it is of the form
$\mathscr{V} \otimes \Theta(\mathscr{V})$ (algebraic tensor product for $\k$ non-Archimedean, and
completed projective tensor product for $\k$ Archimedean), where $\Theta(\mathscr{V})$ carries a smooth
representation $\Theta(\pi)$ of $\tilde{G}$. The representation $\Theta(\pi)$ is sometimes referred to as the
full theta lift (or colloquially the big theta lift) of $\pi$. Results of Howe \cite{Ho89} and Waldspurger \cite{Wald} say that $\Theta(\pi)$
is an admissible representation of finite length, moreover, with the possible exception when $\k$ is a dyadic field,
$\Theta(\pi)$ has a unique irreducible quotient $\theta(\pi)$ called the (local) theta
lift of $\pi$. In this article we shall focus on the full theta lift $\Theta(\pi)$, and shall thus place no restriction
on the residue characteristic of $\k$.

Let $V$, $\tilde{V}$, be the standard modules of $G$, $\tilde{G}$,
respectively; and consider the moment maps $\varphi$,
$\tilde{\varphi}$, from $\Hom(V,\tilde{V})$ to $\g$ and
$\tilde{\g}$, respectively:
\[
\begin{diagram}
 & &  \Hom(V,\tilde{V}) & &   \\
&  \ldTo^{\varphi} & &\rdTo^{\tilde{\varphi}} & \\
\g&  & & &\tilde{\g}
\end{diagram}
\]
See Section \ref{subsec:liftOrbit} for the definition of the moment maps. If $T\in \Hom(V,\tilde{V})$, then $\varphi (T)$ is nilpotent if and
only if $\tilde{\varphi}(T)$ is nilpotent. As usual, this yields a
notion of correspondence for nilpotent orbits $\Orb \subset \g$ and
$\tilde{\Orb}\subset \tilde{\g}$. (In general this may not be one to
one.) Denote by $\Max \Hom (V,\tilde{V})$ the set of full rank
elements in $\Hom(V,\tilde{V})$. Without any loss of generality, we
assume that $\dim V \leq \dim \tilde{V}$, and elements of $\Max \Hom
(V,\tilde{V})$ are then represented by injective maps from $V$ to
$\tilde{V}$. In this article, we shall only be concerned with
nilpotent orbits $\Orb \subset \g$ in the image of $\Max \Hom
(V,\tilde{V})$ under the moment map $\varphi$, namely it satisfies
\begin{equation}
\label{assumption0}
\varphi ^{-1}(\Orb) \cap \Max \Hom (V,\tilde{V})\ne \emptyset.
\end{equation}
This will be our standing assumption. We write the resulting
nilpotent orbit correspondence as $\Orb \mapsto
\tilde{\Orb}=\Theta(\Orb)$. See Section \ref{subsec:liftOrbit} for
details.

As noted in the beginning, the space of generalized Whittaker
models depends on an $\sl_{2}$-triple. To relate the two
$\sl_{2}$-triples in $\g$ and $\tilde{\g}$, we make the following

\begin{dfn}
Let $\gamma=\{X,H,Y\}\subset \g$ and $\tilde{\gamma}=\{\tilde{X},\tilde{H},\tilde{Y}\}\subset \tilde{\g}$ be two $\sl_{2}$-triples of type $\Orb$ and $\tilde{\Orb}$, respectively. We say that $T\in \Hom(V,\tilde{V})$ \emph{lifts} $\gamma$ to $\tilde{\gamma}$ if $\varphi(T)=X$, $\tilde{\varphi}(T)=\tilde{X}$ and $T(V_{j})\subset \tilde{V}_{j+1}$ for all $j$. Here
$V_{j}=\{v\in V\, | \, Hv=jv\}$, and likewise for $\tilde{V}_{j+1}$.
\end{dfn}

Set
\[
\Orb_{\gamma,\tilde{\gamma}}=\{T\in \Hom(V,\tilde{V})\, | \, \mbox{$T$ lifts $\gamma$ to $\tilde{\gamma}$}\},\]
and
\begin{equation}
\label{defoggMax0}
\Orb^{\Max}_{\gamma,\tilde{\gamma}}=\Orb_{\gamma,\tilde{\gamma}}\cap
\Max \Hom (V,\tilde{V}).
\end{equation}
(By Lemma \ref{lem:oggMax}, this is a single $M_{X}\times \tilde{M}_{\tilde{X}}$-orbit.)

Our main result, in a slightly less concrete form than what will be proved in Section \ref{liftWhittaker}, is

\begin{theorem}
\label{MainThm}
Let $(G,\tilde{G})$ be a reductive dual pair, and let $(\omega , \Y )$ be the smooth oscillator representation
associated to the dual pair $(G,\tilde{G})$ and to the character
$\psi$ of $\k$. Let $(\pi,\mathscr{V})$ be a smooth
irreducible genuine representation of $G$. Let $\Orb\subset \g$ be a
nilpotent $G$-orbit in the image of $\Max \Hom (V,\tilde{V})$ under the moment map $\varphi$ and let $\tilde{\Orb}=\Theta(\Orb)\subset \tilde{\g}$ be the corresponding
nilpotent $\tilde{G}$-orbit. Then
\[
\Wh_{\tilde{\Orb}}(\Theta(\pi))\cong \Wh_{\Orb}(\check{\pi}),
\]
where $\check{\pi}$ is the contragredient representation of $\pi$. More precisely, let $\gamma$ and $\tilde{\gamma}$ be two $\mathfrak{sl}_{2}$-triples of type $\Orb$ and $\tilde{\Orb}$, respectively. Then, given any $T=T_{\gamma,\tilde{\gamma}}\in \Orb^{\Max}_{\gamma,\tilde{\gamma}}$, there is a surjective homomorphism $\phi _T:\widetilde{M}_{\chi_{\tilde{\gamma}}} \twoheadrightarrow M_{\chi_{\gamma}}$ (depending on $T$), inducing an action of $\widetilde{M}_{\chi_{\tilde{\gamma}}}$ on $ \Wh_{\gamma}(\check{\pi})$, such that
\[
\Wh_{\tilde{\gamma}}(\Theta(\pi))\cong \Wh_{\gamma}(\check{\pi})
\]
as  $\widetilde{M}_{\chi_{\tilde{\gamma}}}$-modules.
\end{theorem}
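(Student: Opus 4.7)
The plan is to identify both sides of the asserted isomorphism with a common $\Hom$-space built out of the oscillator representation $\Y$ and the transfer data furnished by $T\in \Orb^{\Max}_{\gamma,\tilde\gamma}$, and then to track the $\widetilde{M}_{\chi_{\tilde\gamma}}$-equivariance through the covering $\phi_T$.

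First I would reduce the left-hand side to a $\Hom$-space out of $\Y$. By the universal property of the Howe quotient $\Y\twoheadrightarrow \mathscr{V}\otimes\Theta(\mathscr{V})$ and Schur's lemma for $\pi$, the $G$-module $\mathscr{V}\otimes \S_{\chi_{\tilde\gamma}}$ is $\pi$-isotypic (with $G$ acting trivially on the second factor), so precomposition with the quotient yields a natural isomorphism
\[
\Wh_{\tilde\gamma}(\Theta(\pi))\ =\ \Hom_{\tilde N}\bigl(\Theta(\mathscr{V}),\S_{\chi_{\tilde\gamma}}\bigr)\ \cong\ \Hom_{G\times \tilde N}\bigl(\Y,\ \mathscr{V}\otimes \S_{\chi_{\tilde\gamma}}\bigr),
\]
equivariant for the action of $\widetilde{M}_{\chi_{\tilde\gamma}}$ coming from its action on $\S_{\chi_{\tilde\gamma}}$. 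The goal is then to match this common $\Hom$-space with $\Wh_\gamma(\check\pi) = \Hom_N(\check{\mathscr{V}},\S_{\chi_\gamma})$.

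The core of the proof is to construct, for each $T\in \Orb^{\Max}_{\gamma,\tilde\gamma}$, a mixed Schr\"odinger model of $\Y$ adapted jointly to the gradings $V=\oplus_j V_j$ and $\tilde V=\oplus_j \tilde V_j$ and to the compatibility $T(V_j)\subset \tilde V_{j+1}$. In such a model the unipotent actions of $N$ via $\chi_\gamma$ and of $\tilde N$ via $\chi_{\tilde\gamma}$ become simultaneously triangular, and the two characters appear as compatible central characters of a single Heisenberg group realized inside $\omega$. Stone--von Neumann then forces this common Heisenberg factor to be isomorphic to both $\S_{\chi_\gamma}$ and $\S_{\chi_{\tilde\gamma}}$, with the identification determined canonically by $T$. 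The joint stabilizer of $T$ in $M_X\times \tilde M_{\tilde X}$ acts on this factor via two Weil representations, and comparison of their central covers produces the required surjection $\phi_T\colon \widetilde{M}_{\chi_{\tilde\gamma}}\twoheadrightarrow M_{\chi_\gamma}$. Cancelling this Heisenberg factor on both sides of the $\Hom$-space above (using Frobenius reciprocity and the fact that the $G$- and $\tilde G$-actions commute) identifies it with $\Wh_\gamma(\check\pi)$, with $\widetilde{M}_{\chi_{\tilde\gamma}}$ acting on the target through $\phi_T$.

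The main obstacle is the mixed-model/twisted-Jacquet computation outside the stable range. The standing assumption $\varphi^{-1}(\Orb)\cap \Max\Hom(V,\tilde V)\neq \emptyset$ is precisely what allows one to realize $\S_{\chi_\gamma}$ and $\S_{\chi_{\tilde\gamma}}$ as a single Heisenberg factor of the mixed model; for $T$ outside $\Max\Hom(V,\tilde V)$ the two twisted Jacquet modules would decompose with extra summands, and these must be ruled out by a careful orbit analysis on $\Hom(V,\tilde V)\setminus \Max\Hom(V,\tilde V)$. The fact, from Lemma~\ref{lem:oggMax}, that $\Orb^{\Max}_{\gamma,\tilde\gamma}$ is a single $M_X\times \tilde M_{\tilde X}$-orbit guarantees that the mixed-model construction depends on $T$ only up to the residual ambiguity already built into the $\mathfrak{sl}_2$-triple conjugacy class, and is therefore essentially canonical; the Archimedean case will require in addition that all the identifications are performed in the Casselman--Wallach category, which should follow because every step involves either Schur's lemma or a Stone--von Neumann uniqueness statement whose smooth version is available.
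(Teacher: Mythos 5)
You have guessed the paper's overall strategy correctly (compute the $(\tilde{U},\chi_{\tilde{\gamma}})$-covariants of $\Y$ in a mixed model adapted to $T$, then pair against $\pi$), but the mechanism you put at the center is wrong as stated, and the two results that actually carry the proof are deferred rather than proved. First, the claim that Stone--von Neumann forces a single Heisenberg factor to be ``isomorphic to both $\S_{\chi_{\gamma}}$ and $\S_{\chi_{\tilde{\gamma}}}$'' cannot hold: $\g_{-1}$ and $\tilde{\g}_{-1}$ are in general symplectic spaces of different dimensions, so these two oscillator--Heisenberg modules are not isomorphic. What is true (Lemma \ref{lem:sigma} and Lemma \ref{lemma:tauTisomorphism}) is that there is one symplectic space $W_{\gamma,\tilde{\gamma}}=\bigoplus_{k}\Hom(V_{k},\tilde{V}_{k})$, identified via $J_{T}$ with $\g_{-1}\oplus\tilde{\g}_{-1}$ carrying the form $-\kappa_{-1}\oplus\tilde{\kappa}_{-1}$, whose module $\HH_{\gamma,\tilde{\gamma}}$ restricts through $\alpha_{T}$ to the \emph{tensor product} $\S_{\check{\chi}_{\gamma}}\otimes\S_{\chi_{\tilde{\gamma}}}$; the sign flip (the character $\check{\chi}_{\gamma}$, not $\chi_{\gamma}$) is exactly why $\check{\pi}$ rather than $\pi$ appears in the theorem, a point your sketch never accounts for. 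Second, the covariant space is not a multiple of a Heisenberg module that one can ``cancel'': the key statement (Propositions \ref{prop:mainproposition} and \ref{prop:tildechicoinvariants}) is that $\Y_{\tilde{U},\chi_{\tilde{\gamma}}}\cong\Sch(N\backslash G;\HH_{\gamma,\tilde{\gamma}})$, a Schwartz-induced space from $N$ up to $G$, and it is only this induced structure that makes a Frobenius-type pairing with $\pi$ possible.

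These are precisely the places where the work lies, and your proposal leaves them as acknowledgements rather than arguments. Establishing $\Y_{\tilde{U},\chi_{\tilde{\gamma}}}\cong\Sch(N\backslash G;\HH_{\gamma,\tilde{\gamma}})$ requires the layer-by-layer induction on the height of the gradings (Lemmas \ref{lemma:outerlayer} and \ref{lemma:inducedaction}), and at each layer one must show that $\chi_{\tilde{\gamma}}$-equivariant distributions supported on non-full-rank null maps vanish; this is a Bruhat-type orbit argument which, in the Archimedean case, needs the transverse jet bundle machinery of \cite{KV96} together with the norm estimates of Section \ref{frechet} to land in the rapidly decreasing space rather than merely in smooth vectors. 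Saying the extra contributions ``must be ruled out by a careful orbit analysis'' names this step without supplying it. Likewise, identifying the $(G,\pi)$-isotypic quotient of $\Sch(N\backslash G;\S_{\check{\chi}_{\gamma}})$ with $W_{\gamma}(\check{\pi})\otimes\pi$ (Proposition \ref{prop:SchwartzWhittakerModels}) is not formal Frobenius reciprocity: in the Archimedean case it uses moderate growth, Lemma \ref{lemma:invariantSchwartz}, and automatic-continuity input. Finally, $\phi_{T}$ is not produced by ``comparison of central covers'': it is defined directly from the graded components of $T$ as in \eqref{defphi}, and its surjectivity comes from the explicit product decompositions \eqref{eq:productisometrygroups} of $M_{X}$ and $\tilde{M}_{\tilde{X}}$, the extra factor $G(\tilde{V}^{\tilde{\gamma},1}_{0})$ lying in the kernel. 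So the route is the right one, but as written the central claim is false and the proof's two pillars are missing.
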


\noindent {\bf Remarks}:  (a) Nilpotent orbits in $\g$ may be
parameterized by equivalence classes of admissible
$\epsilon$-Hermitian Young tableaux (Section
\ref{sub:Young-tableaux}). Using this parametrization, the
correspondence $\Orb \mapsto \Theta(\Orb)$ can be described
explicitly and in simple terms. Note that the assumption in
(\ref{assumption0}) allows us to ``embed" the sesquilinear Young
tableau parameterizing $\Orb $ into the sesquilinear Young tableau
parameterizing $\Theta(\Orb)$. See Section \ref{subsec:liftOrbit}.

\noindent (b) One particularly important circumstance is when $(G,\tilde{G})$
is in the stable range with $G$ the smaller member (\cite{HoST}; cf.
\cite{Li89}). Then every nilpotent orbit $\Orb \subset \g$ is in the
image of $\Max \Hom (V,\tilde{V})$ under the moment map $\varphi$,
and for any smooth irreducible genuine representation
$(\pi,\mathscr{V})$ of $G$, we have $\Theta(\pi)\ne 0$
\cite{MVW,PP}. In this case and for $\k$ non-Archimedean, Theorem
\ref{MainThm} is due to M\oe{}glin \cite{Mo98}. Our approach,
explained in the following paragraph, is in some sense more
conceptual. For the type of questions considered in this article, it
is also well-known that substantially more effort is required to
treat the Archimedean case.

\noindent (c) Assume that we are in the stable
range and $\k$ is Archimedean. There is a similar notion of theta
lifting of  nilpotent $K_{\C}$-orbits and one similarly expects a
correspondence of associate cycles \cite{NOT}. We refer the reader to
\cite{Ya01, NZ04} for some results in this direction, and the
definitive result in the recent paper of Loke and Ma \cite{LM13}. We
also refer to several earlier works of Przebinda \cite{Pr91a,Pr93,DP96,Pr00} on
correspondence of wave front sets.

\noindent (d) Generically $\Theta (\pi)$
should be irreducible. One then obtains the space of generalized
Whittaker models for $\theta (\pi)$.  For example it is expected
that under the assumption of stable range, $\Theta (\pi)$ is
irreducible whenever $\pi $ is unitarizable. For $\k$ Archimedean,
this is established in \cite{LM13}.

\noindent (e) As a direct consequence of Theorem \ref{MainThm}, we
have $\Theta(\pi)\ne 0$, if $\Wh_{\Orb}(\check{\pi})\ne 0$, for some
$\Orb$ in the image of $\Max \Hom (V,\tilde{V})$ under the moment map $\varphi$.

\noindent (f) For ``small" $\Orb$, the condition in (\ref{assumption0}) is actually quite restrictive. For example for the zero orbit, the condition implies that the dual pair $(G,\tilde{G})$ is in the stable range with $G$ the smaller member. For this reason there is a need to consider orbit correspondence covered by a $G\times \tilde{G}$ stable set larger than $\Max \Hom (V,\tilde{V})$, and to investigate how their generalized Whittaker models behave. This will be taken up in a forthcoming work of the authors.

\vsp

The main ingredient in the proof of Theorem \ref{MainThm} is our
description of the space of covariants $\Y_{\tilde{U},
\chi_{\tilde{\gamma}}}:=\Hom_{\tilde{U}}(\Y, \chi_{\tilde{\gamma}})$ of the smooth oscillator representation
$\Y$, where $(\tilde{U}, \chi_{\tilde{\gamma}})$ for the
$\sl_{2}$-triple $\tilde{\gamma}$ in $\tilde{\g}$ is as
$(U,\chi_{\gamma})$ for the $\sl_{2}$-triple $\gamma$, and
$\tilde{\gamma}$ is a lift of $\gamma$. (This is reminiscent of the computation of
the Jacquet modules of the smooth oscillator representation by Kudla \cite{Ku86}, though not in method.)
We show it is isomorphic to
a certain space $\Sch(N\backslash G;\HH_{\gamma, \tilde{\gamma}})$ of
rapidly decreasing functions on $N\backslash G$ with values in
$\HH_{\gamma, \tilde{\gamma}}$, where $\HH_{\gamma, \tilde{\gamma}}$ is the
space of the smooth oscillator-Heisenberg representation associated
to a certain symplectic subspace $W_{\gamma, \tilde{\gamma}}$ of
$\Hom (V,\tilde{V})$.  This is the key Proposition
\ref{prop:tildechicoinvariants}. An important point is that we may
construct an isomorphism of $\g_{-1}\oplus \tilde{\g}_{-1}$ with
$W_{\gamma, \tilde{\gamma}}$, which depends on $T\in
\Orb^{\Max}_{\gamma,\tilde{\gamma}}$.  The key proposition basically
says that we can define a natural surjective (matrix coefficient)
map (using ``homogeneous components" of $T$), from $\Y$ to
$\Sch(N\backslash G;\HH_{\gamma, \tilde{\gamma}})$, which induces an
isomorphism on $\Y_{\tilde{U},\chi_{\tilde{\gamma}}}$!

\vsp

Here are some additional words on the organization and contents of
this article. In Section \ref{classical}, we describe classical
groups as the isometry groups of $\epsilon$-Hermitian $D$-modules,
where $D$ is a division algebra over $\k$. In Section \ref{orbit},
we review the well-known parametrization of nilpotent orbits in the
classical Lie algebras, following the book by Collingwood and
McGovern \cite{CM92}. We also introduce the generalized Whittaker
models associated to the nilpotent orbits. In Section \ref{frechet},
we introduce some Fr\'echet spaces of functions on $G$ to realize
the space of generalized Whittaker models for $\k$ Archimedean. In
Section \ref{sec:liftOrbit}, we recall the notion of lifting of
nilpotent orbits in the setting of dual pairs (via the moment maps),
and we describe the fine structure of lifting for those orbits
$\Orb$ and $\tilde{\Orb}$ which correspond via injective maps from
$V$ to $\tilde{V}$. We also prove an explicit isomorphism of $\g_{-1}\oplus
\tilde{\g}_{-1}$ with a symplectic subspace $W_{\gamma,
\tilde{\gamma}}$ of $\Hom(V,\tilde{V})$. In Section
\ref{liftWhittaker}, we relate the generalized Whittaker models of
$\check{\pi}$ and $\Theta (\pi)$. As mentioned previously, its main
ingredient is the description of the space of covariants
$\Y_{\tilde{U}, \chi_{\tilde{\gamma}}}$ of the smooth oscillator
representation $\Y$. To arrive at this, we make extensive use of the
gradation in the standard modules $V$ and $\tilde{V}$ given by the
semisimple elements $H$, $\tilde{H}$ of the two $\sl_{2}$-triples
$\gamma\subset \g$ and $\tilde{\gamma}\subset\tilde{\g}$ of type
$\Orb$ and $\Theta(\Orb)$, respectively. On the one hand, it gives
rise to totally isotropic subspaces and thus convenient realizations
of $\Y$. On the other hand, it facilitates an inductive argument
based on the heights of the gradations. Together with the
isomorphism of $\g_{-1}\oplus \tilde{\g}_{-1}$ with $W_{\gamma,
\tilde{\gamma}}$, this implies the relationship between the
generalized Whittaker models of $\check{\pi}$ and $\Theta(\pi)$.

\vsp

\noindent {\bf Acknowledgements}: the authors thank W. T. Gan, D. Jiang, and B. Sun for their interests and comments. The authors also wish to express their gratitude to the anonymous referee for his critical comments as well as numerous detailed suggestions.

\section{Classical groups as isometry groups of $\varepsilon$-Hermitian modules}
\label{classical}

\subsection{Hermitian $D$-modules}
\label{Hermitianmodules}
Let $\k$ be a local field, $|\cdot|$  its absolute
value, and let $\psi$ be a fix non-trivial unitary character of $\k$. Let $D$ be one of the following division algebras over $\k$:
the field $\k$ itself, a quadratic field extension of $\k$ or the
quaternion division $\k$-algebra. Observe that $D$ comes equipped
with a canonical involutive anti-automorphism (the identity map, the non-trivial Galois element, or
the main involution, respectively) which we will denote by $x \mapsto
\overline{x}$.
Throughout this article, we will only consider finitely generated
modules over $D$.

Let $V$ and $W$ be two right $D$-modules. We will denote the set of
right $D$-module morphisms from $V$ to $W$ by
\[
 \Hom_{D}(V,W)=\{T:V\longrightarrow W \,| \, \mbox{$T(v_{1}a+v_{2}b)=T(v_{1})a+T(v_{2})b$ for all $v_{1}$, $v_{2} \in V$, $a$, $b\in D$}\}.
\]
If $V=W$, we will denote this set by $\End_{D}(V)$. Set
\[
 \GL(V,D)=\{T\in \End_{D}(V) \, | \, \mbox{$T$ is invertible}\}.
\]
When it is clear from the context what the division algebra $D$ is,
we may just omit $D$ in various of these notations.

 Let $V'$ be the set of right $D$-linear functionals on $V$. There is a natural left
 $D$-module structure on $V'$ given by setting
 \[  (a\lambda)(v)=a\lambda(v), \quad \text{for all $a\in D$, $v\in V$, and
 $\lambda \in V'$.} \]
Observe that with this structure, $W\otimes_{D}V'$ is naturally
isomorphic to $\Hom_{D}(V,W)$ as a $\k$-vector space. Given $T\in
\Hom_{D}(V,W)$, we will specify an element in $\Hom_{D}(W',V')$
(analogously defined), which we will also denote $T$, by setting
$(\lambda T)(v):=\lambda(Tv)$, for $v\in V$ and $\lambda \in W'$. This correspondence gives rise to
natural isomorphisms between $\End_{D}(V)$ and $\End_{D}(V')$, and
between $\GL_{D}(V)$ and $\GL_{D}(V')$.

\begin{definition}
Let $\varepsilon=\pm 1$. We say that $(V,B)$ is a right $\varepsilon$-Hermitian $D$-module if $V$ is a right $D$-module and $B$ is an $\varepsilon$-Hermitian form, i.e., $B:V\times V \longrightarrow D$ is a map such that
\begin{enumerate}
 \item $B$ is \emph{sesquilinear}: for all $v_{1}$, $v_{2}$, $v_{3}\in V$, $a$, $b\in D$,
  \[ B(v_{1},v_{2}a+v_{3}b)=B(v_{1},v_{2})a+B(v_{1},v_{3})b \quad \text{and} \quad B(v_{1}a + v_{2}b,v_{3})=\overline{a}B(v_{1},v_{3})+\overline{b}B(v_{2},v_{3}).\]
\item $B$ is $\varepsilon$-\emph{Hermitian}:
$$
B(v,w)=\varepsilon\overline{B(w,v)} \qquad \mbox{ for all $v,w\in V$.}
$$
\item $B$ is \emph{non-degenerate}.
\end{enumerate}
\end{definition}

Given a right $\varepsilon$-Hermitian $D$-module $(V,B)$, we may
construct a left $\varepsilon$-Hermitian $D$-module
$(V^{\ast},B^{\ast})$ in the following way: as a set, $V^{\ast}$
will be the set of symbols $\{v^{\ast}\, |\, v\in V\}$. Then we give to
$V^{\ast}$ a left $D$-module structure by setting, for all $v$, $w
\in V$, $a\in D$,
\[  \text{$v^{\ast}+w^{\ast}=(v+w)^{\ast}$ and $av^{\ast}=(v\overline{a})^{\ast}$.} \]
 Finally, we set
\[
 B^{\ast}(v^{\ast},w^{\ast})=\overline{B(w,v)} \qquad \mbox{for all $v$, $w\in V$.}
\]
 In an analogous way, we may define the $*$ operation on left $\varepsilon$-Hermitian $D$-modules. Then $V^{\ast\ast}$ is naturally isomorphic with $V$.
 Given $T\in \End_{D}(V)$, we define $T^{\ast}\in \End_{D}(V^{\ast})$ by setting $v^{\ast}T^{\ast}:=(Tv)^{\ast}$. With this definition, it is easily seen that $(TS)^{\ast}=S^{\ast}T^{\ast}$, for all $S$, $T\in \End_{D}(V)$. Therefore the map $g\mapsto (g^{\ast})^{-1}$ defines a group isomorphism between $\GL_{D}(V)$ and $\GL_{D}(V^{\ast})$.

Observe that the form $B$ induces a left $D$-module isomorphism
$B^{\flat}:V^{\ast}\longrightarrow V'$ given by
$B^{\flat}(v^{\ast})(w)=B(v,w)$ for $v$, $w\in V$. In what follows,
we will make implicit use of this map to identify these two spaces.
With this identification, for any $T\in \End_{D}(V)$, we can think of $T^{\ast}$ as an element in
$\End_{D}(V)$ defined by
$v^{\ast}(T^{\ast}w):=(v^{\ast}T^{\ast})(w)$, i.e., $T^{\ast}$ is
defined by the usual condition that
\[
 B(v,T^{\ast}w)=B(Tv,w) \qquad \mbox{for all $v$, $w \in V$}.
\]

A $D$-submodule $E\subset V$ is said to be \emph{totally isotropic}
if $B|_{E\times E}=0$. If $E$ is a totally isotropic submodule, then
there exists a totally isotropic submodule $F\subset V$ such that
$B|_{(E\oplus F)\times (E\oplus F)}$ is non-degenerate.  If we set
\[
 U=(E\oplus F)^{\perp}:=\{u\in V\, | \, \mbox{$B(u,w)=0$ for all $w\in E\oplus F$}\},
\]
then $V=E\oplus F \oplus U$, and $B|_{U\times U}$ is non-degenerate.
In this case we say that $E$ and $F$ are totally isotropic,
\emph{complementary} submodules. Observe that then
$B^{\flat}|_{F^{\ast}}:F^{\ast}\longrightarrow E'$ is an
isomorphism. As before we will make implicit use of this isomorphism
to identify $F^{\ast}$ with $E'$.

\subsection{Isometry groups}
\label{Isometry}

Given a right $\varepsilon$-Hermitian $D$-module $(V,B)$, we define
its isometry group
\[
G(V,B)=\{g\in \GL(V) \, | \, \mbox{$B(g v,g w)=B(v,w)$ for all $v$,
$w\in V$}\}.
\]
When there is no risk of confusion regarding $B$, we will denote
this group just by $G(V)$ or even just as $G$. Observe that if $g\in
G$, then $g^{*}=g^{-1}$.

Associated to the group $\GL(V)$ we have the Lie algebra
$\mathfrak{gl}(V)=\End(V)$ with bracket $[T,S]:=TS-ST$, for all
$T$, $S \in \mathfrak{gl}(V)$. Similarly, associated to the group
$G$ we have the Lie algebra
\begin{eqnarray*}
\g &=&\{T\in \mathfrak{gl}(V) \, | \, \mbox{$B(T v,w) + B(v,Tw)=0$ for all $v$, $w\in V$}\}\\
   &=&\{T\in \mathfrak{gl}(V) \, | \, T^*=-T\}.
\end{eqnarray*}
We define a bilinear form on $\g$ by
\begin{equation}
\label{knormalized}
\kappa(T,S)=\Tr(T^{\ast}S)/2,
\end{equation}
for all $T$, $S \in \g$. Here $\Tr(T^{\ast}S)$ is the trace
 of $T^{\ast}S$ as a $\k$-linear transformation. This bilinear form is non-degenerate
 and \emph{invariant} with respect to
 the adjoint action of $G$ on $\g$.

\begin{remark} In the literature, the isometry groups
$G(V,B)$ are called type I. Given a division algebra $D$, one may
consider the algebra $\mathbb{D}=D\oplus D$, which has the canonical
involution $\overline{(a,b)}=(b,a)$, for $a$, $b\in D$. Then one may
similarly define right $\epsilon $-Hermitian $\mathbb{D}$-modules
($\epsilon =\pm 1$) and their isometry groups which are easily seen
to be isomorphic to $\GL(V,D)$, where $V$ are right $D$-modules.
They are called type II. With this modification (from $D$ to
$\mathbb{D}$), all results in this article, which are stated for type I dual pairs,
are expected to carry over to the case of type II dual pairs. We leave this to the interested readers.
\end{remark}

\section{Nilpotent orbits of classical groups}
\label{orbit}

\subsection{Nilpotent orbits and $\sl_{2}$-triples} \label{subsection:nilpotentorbits}
In order to set up notation, and facilitate the exposition of the
reminder of this article, we will review the basic structural results on nilpotent orbits and $\sl_{2}$-triples in $\g$. The exposition given here is based on the
book of Collingwood and McGovern \cite{CM92}. Another standard reference is the book of Carter \cite[Chapter 5]{Ca85}.

Let $X\in \g$ be a nonzero nilpotent element. Then, by the Jacobson-Morozov
theorem, there exists an $\sl_{2}$-triple
$\gamma=\{H,X,Y\} \subset \g$, containing $X$ as the nilpositive element, namely
\[
 [H,X]=2X, \qquad [H,Y]=-2Y, \qquad [X,Y]=H.
\]
Let $\g_{i}=\{Z \in \g \, | \,
\ad(H)(Z)=iZ\}$, for $i \in \Z$. Then, from standard $\sl_{2}$-theory, we have that
\begin{equation}
\g=\bigoplus_{i\in \Z}\g_{i}.
\end{equation}
Now let
\[
 \p=\bigoplus_{i \leq 0}\g_{i}, \qquad  \overline{\p}=\bigoplus_{i \geq 0}\g_{i},
\]
and set
\[
P=\{g\in G \, | \, \Ad(g)\p\subset \p\},\qquad    \overline{P}=\{g\in G \, | \, \Ad(g)\overline{\p}\subset \overline{\p}\}.
\]
 $\overline{P}$ (resp.\ $\overline{\p}$) is called the Jacobson-Morozov parabolic subgroup (resp. parabolic subalgebra) associated to $X$. Observe that $P$ (resp.\ $\p$) is a parabolic subgroup  opposite to $\overline{P}$ (resp. parabolic subalgebra opposite to $\overline{\p}$). (The subgroup $\overline{P}$ depends only on $X$, but the subgroup $P$ depends on the choice of $\sl_{2}$-triple $\gamma$ containing $X$.) Let $M=\{m\in G\, | \, \Ad(m)H=H\}$. Its Lie algebra is $\m =\{Z\in \g\, | \, \ad(Z)H=0\}$, which is exactly $\g _{0}$. Let
\[
\n=\bigoplus_{i\leq -1} \g_{i} \qquad \mbox{and} \qquad \u=\bigoplus_{i\leq -2} \g_{i}.
\]
Observe that if $Z\in \n$, then $Z$ is nilpotent and hence $\exp Z=\sum_{j=0}^{\infty}Z^{j}/j!$ is a well defined element in $\End(V)$. Let $N=\exp \n=\set{\exp Z}{Z\in \n}$ and $U=\exp \u=\set{\exp Z}{Z\in \u}$; then $U$,  $N$ are subgroups of $G$, and $P=MN$. Similarly we have the Levi decomposition $\overline{P}=M\overline{N}$, with $\overline{N}$ the unipotent radical of $\overline{P}$.

For $i\in \Z$, let $V_{i}=\{v\in V\, | \, Hv=iv\}$. Then, again from standard $\sl_{2}$-theory,
\begin{equation}
V=\bigoplus_{i\in \Z} V_{i}.   \label{eq:Vdirectsumdecomposition}
\end{equation}
We may also characterize $M$ as the set of $m\in G$ that preserves
the direct sum decomposition given in
(\ref{eq:Vdirectsumdecomposition}). Now observe that, since
$H^{*}=-H$, $B|_{V_{0}\times V_{0}}$ is non-degenerate and $B$
establishes a perfect pairing between $V_{i}$ and $V_{-i}$ for all
$i > 0$. Using this pairing we define for any $T\in \End(V_{i})$
a map $T^{*}\in \End(V_{-i})$ given by
 \[
 B(Tv,w)=B(v,T^{*}w), \qquad \mbox{for all $v\in V_{i}$, $w\in V_{-i}$}.
 \]
It follows immediately that, if $g\in \GL(V_{i})$, then
$B(gv,(g^{*})^{-1}w)=B(v,w)$, for all $v\in V_{i}$, $w\in V_{-i}$.
From this we conclude that there is an embedding of $\GL(V_{i})$
into $M$ for all $i >0$. We will denote the image of this embedding
by $M_{i}$. Proceeding in a similar manner we may also define a
natural embedding of $G(V_{0},B|_{V_{0}\times V_{0}})$ into $M$,
whose image we will denote by $M_{0}$. Then
\begin{equation}
M = \prod_{i \geq 0} M_{i} \cong G(V_0)\times \prod_{i>0} \GL(V_{i}). \label{eq:descriptionofM}
\end{equation}

Set $\g_{\gamma}=\Span_{\k}\{X,H,Y\}\subset \g$. We have the $\g_{\gamma}$-isotypic decomposition
\begin{equation}
\label{isotypic}
 V=\bigoplus_{j=1}^{l} V^{\gamma,t_{j}},
\end{equation}
where $V^{\gamma,t_{j}}$ is a direct sum of irreducible
$t_{j}$-dimensional $\g_{\gamma}$-modules, and $t_{1}> t_{2}> \ldots > t_{l}>0$. Let
$V^{\gamma,t_{j}}_{i}=V^{\gamma,t_{j}}\cap V_{i}$. It is nonzero if and only if $|i|<t_{j}$ and $i\equiv t_j-1$ ($\mbox{mod } 2$).
Then it is clear that
\begin{equation}
V_{i}=\bigoplus_{j,\, t_{j}>|i|} V^{\gamma,t_{j}}_{i}. \label{eq:inducedhermitianform}
\end{equation}
Using standard results from the representation theory of
$\sl_{2}$, we have that for all $i\geq 0$, the map
$(X|_{V_{-i}})^{i}:V_{-i}\longrightarrow V_{i}$ is invertible. (Here
we are using the convention that $(X|_{V_{0}})^{0}$ is the identity
map on $V_{0}$.) The statement remains true for $i<0$ if we
interpret a negative power of an invertible map as the positive
power of its inverse.

Now define a Hermitian form $B_{i}$ on $V_{i}$ by setting $B_{i}(v,w)=B(X|_{V_{i}}^{-i}v,w)$ for all $v$,
$w\in V_{i}$. Since $B$ establishes a perfect pairing between
$V_{i}$ and $V_{-i}$, it is clear that $B_{i}$ is non-degenerate.
The analysis applied to $(V,B)$ applies equally to $(V^{\gamma,t_{j}},B^{\gamma,t_{j}})$, where $B^{\gamma,t_{j}}$ is the restriction of $B$ to $V^{\gamma,t_{j}}$.
Thus $B_{i}$ is in fact non-degenerate when restricted to any
$V^{\gamma,t_{j}}_{i}$. We will denote by $B_{i}^{\gamma,t_{j}}$ the restriction of $B_{i}$ to $V^{\gamma,t_{j}}_{i}$.
The following result is also clear:
\begin{equation}
\label{eq:XfromVitoViplustwo}
\mbox{If $-t_{j}+1\leq i <i+2 \leq t_{j}-1$,} \qquad \mbox{then $B_{i+2}(Xv,Xw)=-B_{i}(v,w)$} \qquad \mbox{for all $v$, $w\in V^{\gamma,t_{j}}_{i}$}.
\end{equation}
In particular, all the $B_{i}^{\gamma,t_{j}}$'s are determined by $B_{t_{j}-1}^{\gamma,t_{j}}$ (on $V_{t_{j}-1}^{\gamma,t_{j}}$, the space of highest weight vectors in $V^{\gamma,t_{j}}$).

Let $M_{X}=\{m\in M\,| \, \Ad(m)X=X\}$. The following result is due to Kostant. See \cite[Section 3.4]{CM92} or \cite[Proposition 5.5.9]{Ca85}.

\begin{thm}\label{thm:kostant} Let $G_{X}=\{g\in G\, | \, \mbox{$\Ad(g)X=X$}\}$. Then
\[
G_{X}=M_{X} \overline{N}_{X},
\]
where $\overline{N}_{X}=\{n\in \overline{N}\, | \,
\Ad(n)X=X\}$. Furthermore
\[
M_{X} =G_{\gamma}:=\{g\in G\, | \,  \mbox{$\Ad(g)X=X$, $\Ad(g)H=H$,
 $\Ad(g)Y=Y$}\}.
\]
\end{thm}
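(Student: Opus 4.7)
The plan is to split the theorem into two essentially independent assertions: the product decomposition $G_X = M_X \overline{N}_X$, and the identification $M_X = G_\gamma$. The real work lies in establishing the containment $G_X \subset \overline{P}$; once that is in hand, the rest is bookkeeping with the $H$-grading and elementary $\sl_2$-representation theory.

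For the containment $G_X \subset \overline{P}$, I would exploit the fact that the descending filtration $V_{\geq k} := \bigoplus_{i \geq k} V_i$ on the standard module is intrinsically associated to the nilpotent operator $X$ alone: it is the Jacobson (monodromy) filtration, uniquely characterized by $X(V_{\geq k}) \subset V_{\geq k+2}$ together with the condition that $X^k$ induces an isomorphism from $V_{\geq k}/V_{\geq k+1}$ onto $V_{\geq -k}/V_{\geq -k+1}$. Any $g \in G$ commuting with $X$ must therefore preserve each $V_{\geq k}$. A direct computation shows $V_{\geq k}^\perp = V_{\geq -k+1}$ (using that $V_i$ and $V_j$ pair nontrivially under $B$ only when $i+j=0$), so the flag is self-dual and its stabilizer in $G$ is a parabolic subgroup whose Lie algebra is $\overline{\p}$; this parabolic is $\overline{P}$, and therefore $G_X \subset \overline{P}$.

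With the containment secured, for $g \in G_X$ I would write $g = m\overline{n}$ using the Levi decomposition $\overline{P} = M\overline{N}$, with $\overline{n} = \exp Z$, $Z = Z_1 + Z_2 + \cdots \in \bigoplus_{i \geq 1} \g_i$. Expanding $\Ad(\exp Z) = \exp(\ad Z)$ and using $[\g_i, \g_j] \subset \g_{i+j}$ gives $\Ad(\overline{n})X \in X + \bigoplus_{j \geq 3} \g_j$. Since $m$ centralizes $H$, $\Ad(m)$ preserves each weight space $\g_i$; comparing the $\g_2$-components of $\Ad(m)\Ad(\overline{n})X = X$ yields $\Ad(m)X = X$, hence $m \in M_X$, and consequently $\Ad(\overline{n})X = X$, so $\overline{n} \in \overline{N}_X$.

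For the identification $M_X = G_\gamma$, the inclusion $G_\gamma \subset M_X$ is immediate. Conversely, let $m \in M_X$; then $\Ad(m)H = H$ and $\Ad(m)X = X$, so it suffices to show $\Ad(m)Y = Y$. Set $Y' = \Ad(m)Y$; since $m$ centralizes $H$ and $Y \in \g_{-2}$, we have $Y' \in \g_{-2}$. Applying $\Ad(m)$ to $[X,Y] = H$ and using $\Ad(m)X = X$, $\Ad(m)H = H$ gives $[X, Y'] = H = [X, Y]$, so $Y - Y' \in \g_{-2} \cap \ker(\ad X)$; but $\ker(\ad X)$ consists of highest-weight vectors for the $\g_\gamma$-action on $\g$, which have non-negative $H$-weight, forcing this intersection to be zero and $Y' = Y$. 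The main obstacle, as indicated, is the step $G_X \subset \overline{P}$: the Lie-algebra inclusion $\ker(\ad X) \subset \overline{\p}$ is immediate from $\sl_2$-theory, but upgrading to the full group $G_X$ (potentially with disconnected components) is what requires the canonical filtration argument above.
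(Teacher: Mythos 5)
Your proof is correct, but it is worth noting that the paper itself offers no argument for this statement: it is attributed to Kostant and cited from \cite{CM92} (Section 3.4) and \cite{Ca85} (Proposition 5.5.9). Your route is the standard ``Jacobson--Morozov parabolic'' argument: the weight filtration $V_{\geq k}$ is intrinsic to $X$, so $G_{X}$ preserves it and hence lies in $\overline{P}$; then the Levi decomposition $\overline{P}=M\overline{N}$ together with the $\ad(H)$-grading splits any $g\in G_{X}$ as $m\overline{n}$ with $m\in M_{X}$, $\overline{n}\in\overline{N}_{X}$; finally $M_{X}=G_{\gamma}$ follows from injectivity of $\ad(X)$ on $\g_{-2}$. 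The reference proof is organized differently: it first shows (Kostant's conjugacy theorem) that any two $\sl_{2}$-triples with the same nilpositive element $X$ are conjugate by an element of the unipotent radical of $G_{X}$, and deduces the semidirect decomposition $G_{X}=G_{\gamma}\ltimes \overline{N}_{X}$ from that; your filtration argument buys a self-contained proof that avoids the conjugacy theorem and handles disconnected groups (e.g. orthogonal groups) transparently, since the group-level containment $G_{X}\subset\overline{P}$ never appeals to connectedness.

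Two small points to repair in the write-up. First, the characterization of the filtration has the isomorphism running the wrong way: since $X$ raises $H$-weights by $2$, the condition is that $X^{k}$ induces an isomorphism from $V_{\geq -k}/V_{\geq -k+1}$ onto $V_{\geq k}/V_{\geq k+1}$ (not the reverse); with this correction the uniqueness statement you invoke is the standard one for the monodromy filtration. Second, for the step ``the stabilizer of the flag is $\overline{P}$,'' the only containment you actually need is that the flag stabilizer lies in $\overline{P}$, and the cleanest justification is: the stabilizer is a closed subgroup whose Lie algebra is exactly $\overline{\p}$ (your weight computation), any closed subgroup normalizes its own Lie algebra under $\Ad$, and $\overline{P}$ is by definition the normalizer of $\overline{\p}$ in $G$. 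Neither point affects the correctness of the overall argument.
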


We now have all the ingredients to give a description of $M_{X}$ in
the spirit of the description of $M$ given in equation
(\ref{eq:descriptionofM}). Observe that $M_{X}$ acts on
$V^{\gamma,t_{j}}_{t_{j}-1}$ preserving $B^{\gamma,t_{j}}_{t_{j}-1}$. From this observation and
equation (\ref{eq:XfromVitoViplustwo}) we conclude that there is an
embedding of $G(V^{\gamma,t_{j}}_{t_{j}-1},B^{\gamma,t_{j}}_{t_{j}-1})$ into $M_{X}$. Let
$M_{X,t_{j}-1}$ be the image of this embedding. Then
\begin{equation}\label{eq:productisometrygroups}
M_{X}=\prod_{j=1}^{l}M_{X,t_{j}-1}\cong \prod_{j=1}^{l} G(V^{\gamma,t_{j}}_{t_{j}-1}).
\end{equation}

\begin{remark}
\label{rmkzero}
For $X=0$, we may take $\g_{0}=\g$, and $\g_{i}=0$ for $i\ne 0$. With this convention, all the subgroups defined in this section will make sense for any $X$. We will adopt this convention in the sequel, sometimes without mentioning the appropriate (minor) adjustment for the special case of the zero orbit.
\end{remark}

\subsection{$\epsilon$-Hermitian Young tableaux}
\label{sub:Young-tableaux}
A partition of $n$ is a tuple
 $[d_{1},\ldots,d_{k}]$ of positive integers such that
 $d_{1}\geq d_{2} \geq \ldots \geq d_{k}$ and $d_{1}+\cdots+d_{k}=n$. Partitions of
 $n$ are frequently represented by \emph{Young diagrams} in the following way:
 given a partition $\mathbf{d}=[d_{1},\ldots,d_{k}]$ we construct a left-justified
 array of empty boxes such that the $i$-th row has $d_{i}$ boxes. This array is the
 Young diagram associated to $\mathbf{d}$. Another way of describing a partition $\mathbf{d}=[d_{1},\ldots,d_{k}]$ of $n$ is using the \emph{exponential notation} $\mathbf{d}=[t_{1}^{i_{1}},\ldots,t_{l}^{i_{l}}]$,
 which means that the number $t_{k}$ appears $i_{k}$ times (the multiplicity) in the
 partition, and $t_1>\cdots >t_l>0$.

\begin{definition} A \emph{sesquilinear Young tableau} is a pair
$\Gamma=(\mathbf{d}^{\Gamma},(V^{\Gamma},B^{\Gamma}))$ such that
$\mathbf{d}^{\Gamma}=[t_{1}^{i_{1}},\ldots,t_{l}^{i_{l}}]$ is a partition
of $n$, and $(V^{\Gamma},B^{\Gamma})$ is an assignment, for each $1 \leq j\leq
l$, of an $\epsilon_{j}$-Hermitian module
$(V^{\Gamma,t_{j}}_{t_{j}-1},B^{\Gamma,t_{j}}_{t_{j}-1})$ of dimension ${i_{j}}$.
\end{definition}

\begin{example} The following picture represents a sesquilinear Young tableau $\Gamma=(\mathbf{d}^{\Gamma},(V^{\Gamma},B^{\Gamma}))$, where $\mathbf{d}^{\Gamma}=[3^{2},2^{3},1^{2}]$ is a partition of $14$, and $V_{2}^{\Gamma,3}$, $V_{1}^{\Gamma,2}$, $V_{0}^{\Gamma,1}$ have dimensions $2$, $3$ and $2$, respectively.
\[
\raisebox{35pt}{$\begin{array}{l} \raisebox{9pt}{$ (V_{2}^{\Gamma,3},B_{2}^{\Gamma,3})\left\{ \raisebox{15pt}[12pt][0pt]{} \right.$} \\ \raisebox{15pt}{$ (V_{1}^{\Gamma,2},B_{1}^{\Gamma,2}) \left\{ \raisebox{0pt}[14pt][12pt]{} \right.$} \\\raisebox{10pt}{$( V_{0}^{\Gamma,1},B_{0}^{\Gamma,1}) \left\{ \raisebox{0pt}[12pt][-10pt]{} \right.$} \end{array}$} \yng(3,3,2,2,2,1,1)
\]
\end{example}
 Let $(\rho_{m},\k^{m})$ be the irreducible representation of $\sl_{2}$ of
 dimension $m$, and fix, for each $m$, a non-degenerate invariant bilinear form $(\cdot,\cdot)_{m}$
 on $\k^{m}$. Recall that this form is unique up to scalar and that
 $(f_{1},f_{2})_{m}=(-1)^{m-1}(f_{2},f_{1})_{m}$, for all $f_{1}$, $f_{2} \in \k^{m}$. This follows from the analysis surrounding equation (\ref{eq:XfromVitoViplustwo}).

 Given a sesquilinear Young tableau $\Gamma=(\mathbf{d}^{\Gamma},(V^{\Gamma},B^{\Gamma}))$ we will set
 \begin{equation}
 V^{\Gamma,t_{j}}:=V^{\Gamma,t_{j}}_{t_{j}-1}\otimes_{\k}\k^{t_{j}}.
 \end{equation}
 On this space we define a $(-1)^{t_{j}-1}\epsilon_{j}$-Hermitian form $B^{\Gamma,t_{j}}$ by
\begin{equation}
\label{Bgt}
B^{\Gamma,t_{j}}(v_{1}\otimes f_{1}, v_{2}\otimes f_{2})=B^{\Gamma,t_{j}}_{t_{j}-1}(v_{1},v_{2})
 (f_{1},f_{2})_{t_{j}}, \qquad \mbox{$v_{1}$, $v_{2}\in V^{\Gamma,t_{j}}_{t_{j}-1}$, and $f_{1}$, $f_{2} \in \k^{t_{j}}$}.
\end{equation}

\begin{definition} Let $\Gamma=(\mathbf{d}^{\Gamma},(V^{\Gamma},B^{\Gamma}))$ and $\Phi=(\mathbf{d}^{\Phi},(V^{\Phi},B^{\Phi}))$ be two sesquilinear Young Tableaux.
\begin{itemize}
\item[(i)] We say that $\Gamma$ and $\Phi$ are \emph{equivalent} if $\mathbf{d}^{\Gamma}=\mathbf{d}^{\Phi}=[t_{1}^{i_{1}},\ldots,t_{l}^{i_{l}}]$ and $(V^{\Gamma,t_{j}}_{t_{j-1}},B^{\Gamma,t_{j}}_{t_{j-1}})$ is isomorphic to $(V^{\Phi,t_{j}}_{t_{j-1}},B^{\Phi,t_{j}}_{t_{j-1}})$ for all $j=1,\ldots,l$.
\item [(ii)] We say that $\Gamma$ is \emph{$\epsilon$-Hermitian} if $(V^{\Gamma,t_{j}},B^{\Gamma,t_{j}})$ is an $\epsilon$-Hermitian module for all $j=1,\ldots,l$.
\item [(iii)] Given an $\epsilon$-Hermitian module $(V,B)$, we say that  $\Gamma$  is \emph{admissible} for $(V,B)$, or \emph{$(V,B)$-admissible}, if $(\oplus_{j}V^{\Gamma,t_{j}},\oplus_{j}B^{\Gamma,t_{j}})$ is isomorphic to $(V,B)$.
\end{itemize}
\end{definition}

Recall that given an $\sl_{2}$-triple $\gamma=\{X,H,Y\} \subset \g$ there exists $t_{1}>\ldots >t_{l}$ such that $V=\oplus_{j} V^{\gamma,t_{j}}$. Using this decomposition we may define an $\epsilon$-Hermitian Young tableaux $\Gamma_{\gamma}=(\mathbf{d}^{\Gamma_{\gamma}},(V^{\Gamma_{\gamma}},B^{\Gamma_{\gamma}}))$ by setting $\mathbf{d}^{\Gamma_{\gamma}}=[t_{1}^{i_{1}},\ldots,t_{l}^{i_{l}}]$, where $i_{j}=\dim V^{\gamma,t_{j}}_{t_{j}-1}$, and
\begin{equation}
\label{defvgt}
(V^{\Gamma_{\gamma},t_{j}}_{t_{j}-1},B^{\Gamma_{\gamma},t_{j}}_{t_{j}-1}) := (V^{\gamma,t_{j}}_{t_{j}-1},B^{\gamma,t_{j}}_{t_{j}-1}),
\end{equation}
for all $j=1,\ldots,l$.

Methods of \cite[Section 9.3]{CM92} imply that this assignment gives a
bijection between the set of $\sl_{2}$-triples in $\g$
up to the Adjoint action of $G$ and equivalence classes of
admissible $\epsilon$-Hermitian Young tableaux. We thus have

\begin{proposition}
There is a $1$-$1$ correspondence between the following sets:
\[
\{\mbox{Nilpotent orbits in $\g$}\} \longleftrightarrow
\left\{\begin{array}{c} \mbox{Equivalence classes of admissible}\\
\mbox{$\epsilon$-Hermitian Young tableaux}\end{array}\right\}.
\]
\end{proposition}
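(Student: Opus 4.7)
The plan is to factor the correspondence through $G$-conjugacy classes of $\sl_{2}$-triples. The Jacobson-Morozov-Kostant theorem, already invoked in the introduction, gives a bijection between nonzero nilpotent $\Ad G$-orbits in $\g$ and $G$-conjugacy classes of $\sl_{2}$-triples, with the zero orbit handled by the convention of Remark \ref{rmkzero}. Thus it suffices to show that the assignment $\gamma\mapsto\Gamma_{\gamma}$ defined in (\ref{defvgt}) descends to a bijection between $G$-conjugacy classes of $\sl_{2}$-triples in $\g$ and equivalence classes of admissible $\epsilon$-Hermitian Young tableaux.

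Well-definedness on conjugacy classes is straightforward: if $g\in G$ and $\gamma'=\Ad(g)\gamma$, then $g$ carries each isotypic summand $V^{\gamma,t_{j}}$ isometrically onto $V^{\gamma',t_{j}}$ and permutes weight spaces consistently, hence restricts to an isometry of the highest weight data. Admissibility of $\Gamma_{\gamma}$ follows from the identification $V^{\gamma,t_{j}}\cong V^{\gamma,t_{j}}_{t_{j}-1}\otimes_{\k}\k^{t_{j}}$ induced by the $\sl_{2}$-structure (sending $Y^{k}v\mapsto v\otimes e_{k}$ for $v$ in the highest weight space, with $e_{0},\dots,e_{t_{j}-1}$ a standard basis of $\k^{t_{j}}$), under which the restriction of $B$ matches the tensor form (\ref{Bgt}) after a rescaling absorbed into the choice of $(\cdot,\cdot)_{t_{j}}$. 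Note that the relation $(f_{1},f_{2})_{t_{j}}=(-1)^{t_{j}-1}(f_{2},f_{1})_{t_{j}}$ forces $\epsilon_{j}=(-1)^{t_{j}-1}\epsilon$, consistent with the admissibility hypothesis.

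For surjectivity, I would construct a section: given an admissible tableau $\Gamma$, fix an isomorphism of $\bigoplus_{j}(V^{\Gamma,t_{j}},B^{\Gamma,t_{j}})$ with $(V,B)$, and transport the standard $\sl_{2}$-triple on $\k^{t_{j}}$ acting on the second tensor factor of $V^{\Gamma,t_{j}}_{t_{j}-1}\otimes\k^{t_{j}}$ to produce operators $X,H,Y\in\End(V)$. Because the standard nilpotent and neutral elements on $\k^{t_{j}}$ are skew with respect to $(\cdot,\cdot)_{t_{j}}$, formula (\ref{Bgt}) yields $X^{\ast}=-X$, $H^{\ast}=-H$, $Y^{\ast}=-Y$, so $\gamma=\{X,H,Y\}\subset\g$, and by construction $\Gamma_{\gamma}=\Gamma$.

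The main obstacle is injectivity. Suppose $\gamma$ and $\gamma'$ have equivalent Young tableaux, so that for each $j$ there exists an isometry $\phi_{j}:(V^{\gamma,t_{j}}_{t_{j}-1},B^{\gamma,t_{j}}_{t_{j}-1})\to (V^{\gamma',t_{j}}_{t_{j}-1},B^{\gamma',t_{j}}_{t_{j}-1})$. I would extend $\phi_{j}$ to an $\sl_{2}$-module isomorphism $\tilde\phi_{j}:V^{\gamma,t_{j}}\to V^{\gamma',t_{j}}$ by the forced rule $\tilde\phi_{j}(Y^{k}v)=(Y')^{k}\phi_{j}(v)$ for $v$ in the highest weight space and $0\leq k\leq t_{j}-1$. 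The analogue of (\ref{eq:XfromVitoViplustwo}) for $Y$ shows that the form $B_{i}^{\gamma,t_{j}}$ on each lower weight space is determined, up to a universal sign depending only on $i$ and $t_{j}$, by $B^{\gamma,t_{j}}_{t_{j}-1}$, and likewise for $\gamma'$; since the same universal signs appear on both sides, $\tilde\phi_{j}$ is an isometry with respect to the restrictions of $B$. Assembling, $\tilde\phi=\bigoplus_{j}\tilde\phi_{j}$ lies in $G(V,B)$ and satisfies $\Ad(\tilde\phi)\gamma=\gamma'$. The hard part is the sign bookkeeping: one must verify row by row that the sign $(-1)^{t_{j}-1}$ appearing in the symmetry of $(\cdot,\cdot)_{t_{j}}$, the admissibility sign $\epsilon_{j}=(-1)^{t_{j}-1}\epsilon$, and the universal sign relating $B_{i}^{\gamma,t_{j}}$ to $B^{\gamma,t_{j}}_{t_{j}-1}$ all balance, so that $\tilde\phi_{j}$ really preserves the Hermitian structure.
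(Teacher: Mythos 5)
Your proposal is correct and takes essentially the same route as the paper: the paper simply combines the Jacobson--Morozov--Kostant bijection with the assignment $\gamma\mapsto\Gamma_{\gamma}$ of (\ref{defvgt}) and delegates the resulting bijection between $G$-conjugacy classes of $\sl_{2}$-triples and equivalence classes of admissible tableaux to the methods of \cite[Section 9.3]{CM92}, which is exactly the argument you spell out. The sign bookkeeping you flag in the injectivity step is harmless: the constant relating $B(Y^{k}v,Y^{t_{j}-1-k}w)$ to $B^{\gamma,t_{j}}_{t_{j}-1}(v,w)$ is universal (it depends only on $k$ and $t_{j}$), hence is identical for $\gamma$ and $\gamma'$ and cancels when comparing the two sides, exactly as you anticipate.
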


\begin{remark} If $\k$ is the field of real numbers then Hermitian Young
tableaux can be more concretely described by \emph{signed} Young diagrams
\cite[Section 9.3]{CM92}.
\end{remark}

\subsection{Generalized Whittaker models associated to nilpotent orbits}
\label{subsec:GWM}

Let $\gamma=\{X,H,Y\}\subset \g$ be an $\sl_{2}$-triple. As in the introduction, we define the character $\chi_{\gamma}$ of $U$, by $\chi_{\gamma}(\exp Z)=\psi(\kappa(X,Z))$ for all $Z\in \u$; and a symplectic structure on $\g_{-1}$ by setting
\[
\kappa_{-1}(S,T)=\kappa(\ad(X)S,T)=\kappa(X,[S,T]) \qquad \mbox{for all $S$, $T \in \g_{-1}$.}
\]
(Note the similarity between this definition and the definition of the forms $B_{i}$ on $V_{i}$ in Section \ref{subsection:nilpotentorbits}.) Let $\H_{\gamma}$ be the Heisenberg group associated to the symplectic space $(\g_{-1}, \kappa_{-1})$. That is $\H_{\gamma}=\g_{-1}\times \k$, $\{0\}\times \k$ is central, and $(T,0)(S,0)=(T+S,\kappa _{-1}(T,S)/2)$ for all $T$, $S \in \g_{-1}$. Then, according to the Stone-von Neumann theorem, there exists a unique, up to equivalence,
 \emph{smooth} irreducible (unitarizable) representation $(\rho_{\gamma},\S_{\gamma})$ of $\H_{\gamma}$ such that the center of $\H_{\gamma}$ acts by the character $\psi$. Here smooth means that it is locally constant if $\k$ is non-Archimedian, and if $\k$ is Archimedian, $(\rho_{\gamma},\S_{\gamma})$ is the smoothing of the usual irreducible unitary representation of $\H_{\gamma}$ with the central character $\psi$.

Let $\alpha_{\gamma}:U\longrightarrow \k$ be the map given by $\alpha_{\gamma}(\exp Z)=\kappa(X,Z)$, for all $Z\in \u$. It is standard to check that $\alpha_{\gamma}$ defines a surjective group homomorphism. Furthermore, it extends to a group homomorphism $\alpha_{\gamma}:N\mapsto \H_{\gamma}$ given by
\begin{equation}
\alpha_{\gamma}(\exp T \exp Z)=(T,\kappa(X,Z)), \qquad \mbox{for all $T\in \g_{-1}$, $Z\in \u$}. \label{eq:alphagammadefinition}
\end{equation}
By composition, this yields a representation $(\rho_{\chi_{\gamma}},\S_{\chi_{\gamma}})$ of $N$, where $\S_{\chi_{\gamma}}:=\S_{\gamma}$ and
\begin{equation}
\label{eq:defrcg}
\rho_{\chi_{\gamma}}(n)v=\rho_{\gamma}(\alpha_{\gamma}(n))v, \qquad
\mbox{for all $n\in N$, $v\in \S_{\chi_{\gamma}}$.}
\end{equation}
Observe that then, for all $Z\in \u$, $v\in \S_{\chi_{\gamma}}$,
\[
\rho_{\chi_{\gamma}}(\exp Z)v=\rho_{\gamma}(0,\kappa(X,Z))v=\psi(\kappa(X,Z))=\chi_{\gamma}(\exp Z).
\]
In particular, if $\g_{-1}=0$, then $N=U$ acts on the $1$-dimensional space $\S_{\chi_{\gamma}}$  by the character $\chi_{\gamma}$. Since $M_{X}$ preserves $\gamma $,
it is well-known \cite{Weil} that there exists a central cover of
$M_{X}$, to be denoted by $M_{\chi_{\gamma}}$, and a representation
of a semi-direct product $M_{\chi_{\gamma}}\ltimes N$ on
$\S_{\chi_{\gamma}}$ which extends the representation
$\rho_{\chi_{\gamma}}$ of $N$. (When the central cover splits, one may take $M_{\chi_{\gamma}}$ to be $M_{X}$ itself. See \cite{RR93} for the explicit description of the central cover.) We refer to the representation $(\rho_{\chi_{\gamma}},\S_{\chi_{\gamma}})$ of $M_{\chi_{\gamma}}\ltimes N$ as the smooth oscillator-Heisenberg representation associated to $\chi_{\gamma}$. We remark that there is a notion of ``smooth Fr\'{e}chet representations of moderate growth" for groups of the type $M_{\chi_{\gamma}}\ltimes N$. See \cite[Definition 1.4.1]{du} or \cite[Section 2]{Su}.

\begin{definition}
\label{def:gwm}
Let $(\pi,\mathscr{V})$ be a smooth representation of $G$, and let $\gamma=\{X,H,Y\}\subset \g$ be an $\sl_{2}$-triple. We define
the \emph{space of generalized Whittaker models of $\pi$ associated
to $\gamma $} to be
\begin{equation}
\label{defwhittaker}
\Wh_{\gamma}(\pi)=\Hom_{N}(\mathscr{V},\S_{\chi_{\gamma}}).
\end{equation}
Note that $\Wh_{\gamma}(\pi)$ is naturally an
$M_{\chi_{\gamma}}$-module.
\end{definition}

We also make the following definition.

\begin{definition}
Let $\Orb\subset \g$ be a nonzero nilpotent orbit. We say that $\gamma=\{X,H,Y\}$ is an \emph{$\sl_{2}$-triple of type $\Orb$} if $X\in \Orb$, where $X$ is the nilpositive element of the $\sl_{2}$-triple.
\end{definition}

From the well-known results of Jacobson-Morozov and Kostant \cite[Chapter 3]{CM92}, the map $\gamma=\{X,H,Y\}\mapsto \Orb =\Ad G
\cdot X$ yields a 1-1 correspondence between
\[
\left\{\begin{array}{c} \mbox{$\Ad G$ conjugacy classes of}\\
\mbox{$\sl_{2}$-triples in $\g$}\end{array}\right\}
\longleftrightarrow \left\{\begin{array}{c} \mbox{Nonzero nilpotent $\Ad G$-orbits}\\
\mbox{$\Orb \subset \g$}\end{array}\right\}.
\]
As noted in the introduction, it is clear that given two
conjugate $\sl_{2}$-triples $\gamma$, $\gamma '$, there will be an obvious
isomorphism $\phi:\Wh_{\gamma}(\pi)\longrightarrow
\Wh_{\gamma '}(\pi)$ that intertwines the action of
$M_{\chi_{\gamma}}$ and $M_{\chi_{\gamma '}}$. By abuse of notation, we will denote
$\Wh_{\gamma}(\pi)$ just by $\Wh_{\Orb}(\pi)$ and we will
call it the \emph{space of generalized Whittaker models associated
to $\Orb$}, or the \emph{space of generalized Whittaker models of
type $\Orb$}.

\begin{remark} Recall the convention in Remark \ref{rmkzero} for $X=0$. With this convention the expression in \eqref{defwhittaker} and therefore Definition \ref{def:gwm} will make sense for all nilpotent orbits.
\end{remark}

\section{A realization of generalized Whittaker models: $\k$ Archimedean}

Let $(\pi,\V)$ be a smooth representation of $G$, and let
$\gamma=\{X,H,Y\}\subset\g$ be an $\sl_{2}$-triple. In this section
we will give convenient realizations of the spaces
$\S_{\chi_{\gamma}}$ and $\Wh_{\gamma}(\pi)$ associated to $\gamma$
and $\pi$. As we will see later in this section, the analysis
required for $\k$ Archimedian is substantially more involved than
the case where $\k$ is non-Archimedian. For this reason, we will
devote this section to the Archimedean case and will be contented to
just indicate how the analog results work in the non-Archimedian
case.

\label{frechet}

\subsection{Norms on $G$}\label{norms} Assume that $\k=\R$ or $\C$.
Let $(V,B_{V})$ be an $\epsilon$-Hermitian module of dimension $n$. Then we
have the following possibilities for $G=G(V)$:

\begin{enumerate}
\item $\k=\R$ and $D=\R$. In this case, either
\[
\mbox{$G\cong O(p,q)$,  $p+q=n$,  if $\epsilon=1$,} \qquad \mbox{or}\qquad  \mbox{$G\cong Sp(n,\R)$ if $\epsilon=-1$ and $n$ is even.}
\]

\item $\k=\R$ and $D=\C$. In this case
\[
\mbox{$G\cong U(p,q)$, $p+q=n$, \qquad \qquad \qquad \qquad \mbox{regardless of} \, $\epsilon \ \ (=\pm 1$).}
\]

\item $\k=\R$ and $D=\mathbb{H}$. In this case either
\[
\mbox{$G\cong Sp(p,q)$,  $p+q=n$, if $\epsilon=1$,} \qquad \mbox{or}\qquad  \mbox{$G\cong O^{\ast}(2n)$, if $\epsilon=-1$}.
\]

\item $\k=\C$ and $D=\C$. Then
\[
\mbox{$G\cong O(n,\C)$ if $\epsilon=1$} \qquad \mbox{or} \qquad \mbox{$G \cong Sp(n,\C)$ if $\epsilon=-1$ and $n$ is even.}
\]
\end{enumerate}

Let $V=E\oplus U\oplus F$, with $E$ and $F$ totally isotropic,
complementary submodules of maximal dimension. Thus $U$ is anisotropic. Let
$P_{E}=\Stab_{E}$, the stabilizer of $E$. Then $P_{E}=M_{E}N_{E}$, is a Langlands decomposition of $P_{E}$,
where $M_{E}=\{m\in P_{E}\, | \, m\cdot F\subset F\}$ and the Lie algebra of $N_{E}$ is
\[
\n_{E}\cong \Hom(U,E)\oplus \z,
\]
with $\z\cong \{Z:F \rightarrow E\, | \, Z^{\ast}=-Z\}$. It follows from this description that $M_{E}\cong \GL(E)\times G(U)$ and that $G(U)$ is compact. Thus there exists
a real inner product $B^{+}_{U}$ on $U$ such that $G(U,B)\subset G(U,B^{+}_{U})$.

Let $\{e_{1},\ldots, e_{l}\}$, and $\{f_{1},\ldots,f_{l}\}$ be basis of $E$ and $F$, respectively, such that
\[
B_{V}(e_{i},f_{j})=\delta_{i,l-j+1}, \qquad \mbox{for all $1\leq i,j\leq l$.}
\]
Then we can extend $B^{+}_{U}$ to an inner product $B^{+}_{V}$ on $V$ by setting $\{e_{1},\ldots, e_{l},f_{1},\ldots,f_{l}\}$ to be an orthonormal basis of $E\oplus F$, and $(E\oplus F)^{\perp}=U$.

If we set $K=G(V,B)\cap G(V,B^{+}_{V})$, then $K$ is a maximal compact subgroup of $G(V,B_{V})$. Moreover, if we set
\[a(\lambda _1,...,\lambda_l)=\left[\begin{smallmatrix} \lambda_{1} & & & & & & \\
                                 & \ddots & & & & & \\
                                 &  &\lambda_{l} & & & & \\
                                 &  & &I_{\dim U} & & & \\
                                 &  & & & \lambda_{l}^{-1}& & \\
                                 &  & & & & \ddots & \\
                                 &  & & & & & \lambda_{1}^{-1} \end{smallmatrix}\right],\]
then \[
A=\left. \left\{ a(\lambda _1,...,\lambda_l)\, \right| \, \mbox{$\lambda_{i}\in \R^{\ast}$, for $i=1,\ldots,l$}\right\}
\]
is a maximal split torus in $G(V, B_{V})$ and, according to the Cartan decomposition, for any $g\in G(V,B_{V})$, there exists $k_{1}$, $k_{2}\in K$, and $a\in A$ such that $g=k_{1}ak_{2}$.

Given $g\in G(V, B_{V})$, let
\begin{equation}
\label{normg}
\|g\|=\sup_{B^{+}_{V}(v,v)=1} [B^{+}_{V}(gv,gv)]^{\frac{1}{2}}
\end{equation}
be the operator norm restricted to $G(V, B)$. Observe that if $g=k_{1}ak_{2}$, with $k_{1}$, $k_{2}\in K$ and $a=a(\lambda _1,...,\lambda_l)$, then
\[
\|g\|=\max\{|\lambda_{1}|,\ldots,|\lambda_{l}|,|\lambda_{1}|^{-1},\ldots,|\lambda_{l}|^{-1}\}.
\]
From this observation it is immediate that $\|g\|=\|g^{-1}\|$ and $\|\exp tX\|=\|\exp X\|^{t}$ for all $t\geq 0$, and all $X\in \a=\Lie(A)$. Since $\|\cdot \|$ is the operator norm in $\End(V)$ we also have that $\|g_{1}g_{2}\|\leq \|g_{1}\|\|g_{2}\|$ for all $g_{1}$, $g_{2}\in G$, and hence $\|\cdot \|$ satisfies all the properties of a norm on $G$ \cite[2.A.2]{Wa88}.

\subsection{Inequalities regarding norms}

Let $\gamma=\{X,H,Y\}\subset \g$ be an $\sl_{2}$-triple, and let
$P=MN$ be as in Section \ref{subsection:nilpotentorbits}. Assume that $V=\oplus_{k=-r}^{r} V_{k}$, and set
$\End_{i}(V)=\oplus_{k=-r}^{r}
\Hom(V_{k},V_{k+i})$, where $V_{k}=0$ if $|k|>r$. Then
\[
\End(V)=\bigoplus_{i=-2r}^{2r} \End_{i}(V),
\]
and $\g_{i}=\g\cap \End_{i}(V)$.  Recall that, if $\g_{-1}\neq 0$,
then there is a symplectic structure on $\g_{-1}$ given by $\kappa
_{-1}(S,T)=\frac{1}{2}\Tr(X[S,T])$. Let $\g_{-1}=\e\oplus \f$ be a complete
polarization of $\g_{-1}$ with respect to this symplectic structure,
and let $\Her_{-1}(V)=\set{T\in \End_{-1}(V)}{T^{\ast}=T}$. (Here
$\Her_{-1}$ stands for Hermitian of degree $-1$.) Then we have a
decomposition $\End_{-1}(V)=\g_{-1}\oplus \Her_{-1}(V)=\e\oplus \f
\oplus \Her_{-1}(V)$. Define a norm $\|\cdot\|_{-1}$ on
$\End_{-1}(V)$ by setting
\[
\|T_{\e}+T_{\f}+T_{\Her_{-1}(V)}\|_{-1}=\|T_{\e}\|+\|T_{\f}\|+\|T_{\Her_{-1}(V)}\|,
\]
where, as before, $\|\cdot \|$ is the operator norm, $T_{\e}\in \e$,
$T_{\f}\in \f$, and $T_{\Her_{-1}(V)}\in \Her_{-1}(V)$ are
arbitrary. Now given $T\in \End(V)$, let
\[
\|T\|_{\gamma}=\|T_{-1}\|_{-1}+\sum_{i\neq -1} \|T_{i}\|,
\]
where $T=\oplus T_{i}$, with $T_{i}\in \End_{i}(V)$. Then $\|\cdot \|_{\gamma}$ defines a norm on $\End(V)$ and, since all norms on a finite dimensional vector space are equivalent, there exists constants $C_{1}$, $C_{2}>0$ such that for all $T\in \End(V)$
\begin{equation}\label{eq:equivalenceofnorms}
C_{1}\|T\| \leq \|T\|_{\gamma} \leq C_{2}\|T\|.
\end{equation}

Recall that $\n =\u \oplus \g_{-1}=\u \oplus \e\oplus \f$. Let $\n_{\e}=\u\oplus\e$ and $\n_{\f}=\u\oplus\f$, which are ideals of $\n$.
Let
\begin{equation}
\label{eq:nenf}
N_{\e}=\exp\n_{\e} \ \ \ \text{and} \ \ \ N_{\f}=\exp \n_{\f}
\end{equation}
be the corresponding normal subgroups of $N$. Observe that for every $n\in N$ there exists unique $u\in U$, $Z_{\f}\in \f$ and $Z_{\e}\in \e$ such that $n=u(\exp Z_{\f})(\exp Z_{\e})$. It follows immediately that  $N_{\f}\backslash N\cong \e$.

\begin{lemma} For all $Z\in \e$, $\tilde{n}\in N_{\f}$, we have
\begin{equation}
\label{eq:ntildeZ}
\|\tilde{n}\, \exp Z\|\geq C_1(1+\|Z\|).
\end{equation}
\end{lemma}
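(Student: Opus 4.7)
The plan is to exploit the graded decomposition of $\End(V)$ together with the norm equivalence (\ref{eq:equivalenceofnorms}). First, I would write a generic element $\tilde{n}\in N_{\f}$ as $\tilde{n}=u\exp Z_{\f}$ with $u=\exp N_{u}$, $N_{u}\in \u\subset \bigoplus_{j\leq -2}\g_{j}$, and $Z_{\f}\in \f$, and then analyze the product
\[
\tilde{n}\exp Z \;=\; u\cdot \exp Z_{\f}\cdot \exp Z \;\in\; \End(V)
\]
with respect to the gradation $\End(V)=\bigoplus_{i}\End_{i}(V)$. Since $\u\subset \bigoplus_{j\leq -2}\g_{j}$ and $Z_{\f}, Z\in \g_{-1}$, each of the three exponential factors has non-zero components only in non-positive degrees, with degree-$0$ component equal to $I_{V}$. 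A short bookkeeping of the triples $(i,j,k)$ with $i+j+k=-1$, $i\in\{0\}\cup\{\leq -2\}$, and $j,k\leq 0$ shows that the only contributing triples are $(0,0,-1)$ and $(0,-1,0)$; hence the degree-$(-1)$ component of $\tilde{n}\exp Z$ is exactly $Z + Z_{\f}$, and its degree-$0$ component is $I_{V}$.

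Next, because $Z\in \e$ and $Z_{\f}\in \f$ lie in transverse halves of the polarization $\g_{-1}=\e\oplus \f$, the element $(\tilde{n}\exp Z)_{-1}=Z+Z_{\f}$ has vanishing $\Her_{-1}(V)$-component, so by the definition of $\|\cdot\|_{-1}$,
\[
\|(\tilde{n}\exp Z)_{-1}\|_{-1} = \|Z\| + \|Z_{\f}\| \geq \|Z\|.
\]
Combining this with $\|(\tilde{n}\exp Z)_{0}\| = \|I_{V}\| = 1$ and summing in the definition of $\|\cdot\|_{\gamma}$ yields
\[
\|\tilde{n}\exp Z\|_{\gamma} \geq 1 + \|Z\|.
\]

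Finally, applying the right-hand inequality of (\ref{eq:equivalenceofnorms}) gives $\|\tilde{n}\exp Z\|\geq C_{2}^{-1}\|\tilde{n}\exp Z\|_{\gamma}\geq C_{2}^{-1}(1+\|Z\|)$, which is the stated inequality after relabeling the constant $C_{2}^{-1}$ as $C_{1}$. The argument is essentially graded-components bookkeeping, so no serious obstacle is anticipated; the one point warranting care is that cross terms from $u$ with the higher-order terms of $\exp Z_{\f}$ and $\exp Z$ cannot contribute to the degree-$(-1)$ piece, which is guaranteed by the strict inclusion $\u\subset \bigoplus_{j\leq -2}\g_{j}$ together with the non-positivity of all degrees appearing in the other two exponentials.
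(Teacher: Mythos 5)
Your argument is correct and follows essentially the same route as the paper: decompose $\tilde{n}\exp Z=\exp Z_{1}\exp Z_{\f}\exp Z$, observe that its degree-$0$ and degree-$(-1)$ graded components in $\End(V)$ are $1$ and $Z+Z_{\f}$ respectively, bound $\|\cdot\|_{\gamma}$ from below by $1+\|Z\|$, and finish with the norm equivalence (\ref{eq:equivalenceofnorms}). Your explicit bookkeeping of contributing degrees and your remark that the constant is really $C_{2}^{-1}$ (which the paper loosely labels $C_{1}$) are fine refinements of the same proof.
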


\begin{proof}
Observe that $\tilde{n}=\exp Z_{1} \exp Z_{2}$, for some $Z_{1}\in
\u$, $Z_{2}\in \f$. Hence, as an element of $\End(V)$,  $\tilde{n}\, \exp
Z= \exp Z_{1} \exp Z_{2} \exp Z =1+Z+Z_{2}+\tilde{Z}$, for
some $\tilde{Z}\in \oplus_{k=2}^{2r} \End_{-k}(V)$. Therefore,
\begin{eqnarray*}
\|\tilde{n}\, \exp Z\|_{\gamma} & = & \|1+Z+Z_{1}+\tilde{Z}\|_{\gamma} \nonumber \\
                                           & = & 1+\|Z\|+\|Z_{1}\|+\|\tilde{Z}\|_{\gamma} \nonumber\\
                                           &\geq & 1+\|Z\|.
\end{eqnarray*}
The lemma then follows from (\ref{eq:equivalenceofnorms}).
\end{proof}

\begin{lemma}\label{lemma:ntildeninequality} There exists constants $\tilde{d}$, $\tilde{C} >0$ such that for all $Z\in \e$, $\tilde{n}\in N_{\f}$
\[
\|\tilde{n} (\exp Z)\|^{\tilde{d}}\geq \tilde{C}\|\tilde{n}\|\|(\exp Z)\|.
\]
\end{lemma}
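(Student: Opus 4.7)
The strategy is to bootstrap from the preceding lemma by exploiting that, because $N$ sits in $\GL(V)$ acting with bounded nilpotency length, the exponential map is polynomial on $\e$, so that $\|\exp(\pm Z)\|$ is polynomially bounded by $\|Z\|$. Concretely, any $Z\in\e\subset\g_{-1}$ lowers the grading of $V$ by one, and because the grading of $V$ lives between $-r$ and $r$ we have $Z^{2r+1}=0$; thus $\exp(\pm Z)=\sum_{k=0}^{2r}(\pm Z)^k/k!$, and there is a universal constant $C>0$ with
\[ \|\exp(\pm Z)\|\le C(1+\|Z\|)^{2r}. \]

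Feeding the preceding lemma ($\|\tilde n\exp Z\|\ge C_1(1+\|Z\|)$) into this estimate gives
\[ \|\exp Z\|\le C_3\,\|\tilde n\exp Z\|^{2r},\qquad C_3:=C/C_1^{2r}. \]
To obtain an analogous bound on $\|\tilde n\|$, the plan is to factor $\tilde n=(\tilde n\exp Z)\,\exp(-Z)$ and apply submultiplicativity of the operator norm, combined with the same polynomial estimate applied to $\exp(-Z)$:
\[ \|\tilde n\|\le\|\tilde n\exp Z\|\cdot\|\exp(-Z)\|\le C_3\,\|\tilde n\exp Z\|^{2r+1}. \]
Multiplying the two displayed inequalities yields $\|\tilde n\|\,\|\exp Z\|\le C_3^{\,2}\,\|\tilde n\exp Z\|^{4r+1}$, so the lemma follows with $\tilde d=4r+1$ and $\tilde C=C_3^{-2}$.

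The only nontrivial ingredient is the polynomial bound on $\|\exp(\pm Z)\|$, which is automatic from the boundedness of the grading on $V$ (so that elements of $\e$ are uniformly nilpotent as matrices of bounded index), and the rest is a one-line manipulation of the preceding lemma together with submultiplicativity of the operator norm; I do not foresee any substantive obstacle. Note that $\|\cdot\|\ge 1$ on $G$, so the degenerate boundary cases ($Z=0$ or $\tilde n=1$) are automatically accommodated by choosing $\tilde C\le 1$.
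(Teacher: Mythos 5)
Your proof is correct and takes essentially the same approach as the paper: the truncated-exponential bound $\|\exp(\pm Z)\|\le C(1+\|Z\|)^{2r}$ combined with the preceding lemma gives $\|\exp Z\|\le C_3\|\tilde{n}\exp Z\|^{2r}$, and submultiplicativity then controls $\|\tilde{n}\|$, yielding $\tilde{d}=4r+1$ exactly as in the paper. The only cosmetic difference is that the paper bounds $\|(\exp Z)^{-1}\|$ via the identity $\|g\|=\|g^{-1}\|$ on $G$ rather than re-applying the polynomial estimate to $\exp(-Z)$, which changes nothing.
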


\begin{proof}
Since $\|\cdot \|$ is the operator norm on $\End(V)$, we have that
\begin{equation}\label{eq:nZinequality}
\|\exp Z\| \leq 1+\|Z\|+\frac{\|Z\|^{2}}{2!}+\cdot + \frac{\|Z\|^{2r}}{2r!}\leq C_{3}(1+\|Z\|)^{2r},
\end{equation}
for some constant $C_{3} >0$. Here we have used that $V=\oplus_{k=-r}^{r} V_{k}$. Combining equations  (\ref{eq:ntildeZ}) and (\ref{eq:nZinequality})  we get that
\begin{equation}\label{eq:inequalityntildenn}
\|\tilde{n} (\exp Z)\|^{2r}\geq C_{1}^{2r}(1+\|Z\|)^{2r}\geq \frac{C_{1}^{2r}}{C_{3}} \|\exp Z\|.
\end{equation}
On the other hand, since the restriction of $\|\cdot\|$ to $G$ defines a norm on $G$, we have that $\|\tilde{n}\|\leq \|(\exp Z)^{-1}\|\|\tilde{n} (\exp Z)\|=\|\exp Z\|\|\tilde{n} (\exp Z)\|
$. From this and equation (\ref{eq:inequalityntildenn})
\[
\|\tilde{n} (\exp Z)\| \geq \frac{\|\tilde{n}\|}{\|\exp Z\|}\geq \frac{C_{1}^{2r}}{C_{3}} \frac{\|\tilde{n}\|}{\|\tilde{n} (\exp Z)\|^{2r}},
\]
and hence $\|\tilde{n} (\exp Z)\|^{2r+1}\geq \frac{C_{1}^{2r}}{C_{3}}\|\tilde{n}\|$. But now this inequality and equation  (\ref{eq:inequalityntildenn}) imply that, if we set $\tilde{C}=(\frac{C_{1}^{2r}}{C_{3}})^{2}$ and $\tilde{d}=4r+1$, then
\[
\|\tilde{n} (\exp Z)\|^{\tilde{d}}\geq \tilde{C}\|\tilde{n}\| \|\exp Z\|.
\]
\end{proof}

\begin{corollary}\label{cor:nmktildeninequality}
There exists constants $d_{0}$, $C_{0} >0$  such that, for all $k\in K$, $m\in M$, $Z\in \e$, and $\tilde{n}\in N_{\f}$
\[
\|\tilde{n} (\exp Z)mk\|^{d_{0}} \geq C_{0}\|\tilde{n}\|\|\exp Z\|\|m\|.
\]
\end{corollary}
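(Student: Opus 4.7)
The plan is to reduce the three-factor bound to Lemma \ref{lemma:ntildeninequality} by peeling off the factors $k$ and $m$ separately, using the fact that the reference inner product $B^{+}_{V}$ can be chosen compatibly with the $H$-weight decomposition of $V$. Since $K\subset G(V,B^{+}_{V})$, every $k\in K$ is a $B^{+}_{V}$-isometry; hence $\|k\|=1$, and more usefully right multiplication by $k$ preserves the operator norm, so $\|gk\|=\|g\|$ for all $g\in G$. In particular $\|\tilde{n}(\exp Z)mk\|=\|\tilde{n}(\exp Z)m\|$ and $\|mk\|=\|m\|$, so the factor $k$ can be dropped from the outset.

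The main technical step is the test-vector lower bound $\|\tilde{n}(\exp Z)m\|\geq\|m\|$. Because $H^{*}=-H$, the form $B$ pairs $V_j$ with $V_{-j}$ perfectly and vanishes on $V_i\times V_j$ for $i+j\neq 0$; consequently the polarization $V=E\oplus U\oplus F$ underlying $B^{+}_{V}$ may (and shall) be chosen compatibly with the grading: put $E=\bigoplus_{j>0}V_j\oplus E_0$ and $F=\bigoplus_{j<0}V_j\oplus F_0$, where $V_0=E_0\oplus U\oplus F_0$ is a polarization of $(V_0,B|_{V_0\times V_0})$. With such a choice, the weight spaces are pairwise $B^{+}_{V}$-orthogonal. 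Since $m\in M$ preserves each $V_j$, one can pick some $j$ and a unit vector $v_0\in V_j$ with $\|mv_0\|_+=\|m\|$; setting $v=k^{-1}v_0$ (still of unit norm), we have $\tilde{n}(\exp Z)mkv=\tilde{n}(\exp Z)mv_0$. Writing $\tilde{n}(\exp Z)=\exp W$ with $W\in\n=\bigoplus_{i\leq-1}\g_i$, every application of $W$ strictly lowers the $H$-weight, so $\tilde{n}(\exp Z)mv_0=mv_0+w$ with $w\in\bigoplus_{i<j}V_i$; orthogonality then gives $\|\tilde{n}(\exp Z)mv_0\|_+^{2}=\|mv_0\|_+^{2}+\|w\|_+^{2}\geq\|m\|^{2}$, whence $\|\tilde{n}(\exp Z)mk\|\geq\|m\|$.

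To finish, I combine this with the routine norm inequality $\|\tilde{n}(\exp Z)mk\|\cdot\|(mk)^{-1}\|\geq\|\tilde{n}(\exp Z)\|$; using $\|(mk)^{-1}\|=\|m^{-1}\|=\|m\|$ (the last equality from the Cartan decomposition) yields $\|\tilde{n}(\exp Z)mk\|\geq\|\tilde{n}(\exp Z)\|/\|m\|$. Multiplying this by the test-vector inequality produces $\|\tilde{n}(\exp Z)mk\|^{2}\geq\|\tilde{n}(\exp Z)\|$. Raising both sides to the $\tilde{d}$-th power and invoking Lemma \ref{lemma:ntildeninequality} gives $\|\tilde{n}(\exp Z)mk\|^{2\tilde{d}}\geq\tilde{C}\|\tilde{n}\|\|\exp Z\|$, and one final multiplication by $\|\tilde{n}(\exp Z)mk\|\geq\|m\|$ delivers $\|\tilde{n}(\exp Z)mk\|^{2\tilde{d}+1}\geq\tilde{C}\|\tilde{n}\|\|\exp Z\|\|m\|$, so $d_{0}=2\tilde{d}+1$ and $C_{0}=\tilde{C}$ suffice. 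The main obstacle is the test-vector lower bound, which relies on setting up $B^{+}_{V}$ so as to make $H$-weight spaces orthogonal; once that is in place, the rest is a routine chain of submultiplicative inequalities, and any incompatibility in the polarization would be absorbed into the constant $C_{0}$ without affecting $d_{0}$.
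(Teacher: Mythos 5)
Your argument is correct, and its final assembly is exactly the paper's: discard $k$ using $K$-invariance of the operator norm, establish $\|\tilde{n}(\exp Z)m\|\geq\|m\|$ and $\|\tilde{n}(\exp Z)m\|^{2}\geq\|\tilde{n}(\exp Z)\|$, and then combine with Lemma \ref{lemma:ntildeninequality} to get $d_{0}=2\tilde{d}+1$, $C_{0}=\tilde{C}$. The difference is in how the two intermediate inequalities are obtained: the paper simply quotes them from the proof of \cite[Theorem 7.2.1]{Wa88}, whereas you prove the first one directly by a test-vector argument (a highest-norm unit vector in a single $H$-weight space, perturbed only into strictly lower weights by the unipotent factor) and deduce the second from it by submultiplicativity. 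This buys a self-contained proof, at the price of needing the reference form $B^{+}_{V}$ to make the weight spaces orthogonal: for that you must not only take $E=\bigoplus_{j>0}V_{j}\oplus E_{0}$, $F=\bigoplus_{j<0}V_{j}\oplus F_{0}$ but also choose the orthonormal bases $\{e_{i}\},\{f_{i}\}$ adapted to the weights (with $B$-dual bases across $V_{j}$ and $V_{-j}$), which is possible since $B$ pairs $V_{j}$ with $V_{-j}$ and vanishes otherwise. Since $\gamma$ is fixed throughout Section \ref{frechet}, one may simply make this adapted choice of $B^{+}_{V}$ (and hence of $K$) once and for all; alternatively, as you note, an unadapted choice is handled by norm equivalence together with compactness of $K$, which changes only $C_{0}$ and not $d_{0}$ — it would be worth spelling out that this transfer also covers the fact that the $K$ in the statement is the one attached to the original $B^{+}_{V}$, not to your adapted form. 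With these small points made explicit, your proof is a complete and slightly more elementary substitute for the citation to Wallach.
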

\begin{proof}
By definition of $\|\cdot\|$, $\|\tilde{n} (\exp Z)mk\|=\|\tilde{n} (\exp Z)m\|$. Now, from the proof of \cite[Theorem 7.2.1]{Wa88}, we have that
\begin{equation}
\label{eq:ntildenminequality}
\|\tilde{n} (\exp Z)m\| \geq \|m\| \ \mbox{and}\ \|\tilde{n} (\exp Z)m\|^{2} \geq \|\tilde{n} (\exp Z)\|.
\end{equation}
Let $\tilde{C}$ and $\tilde{d}$ be as in Lemma \ref{lemma:ntildeninequality}. Then, if we set $d_{0}=2\tilde{d}+1 $ and $C_{0}=\tilde{C}$, we have that
\[
\|\tilde{n} (\exp Z)mk\|^{d_{0}}\geq \|\tilde{n} (\exp Z)\|^{\tilde{d}}\|m\|\geq C_{0}\|\tilde{n}\| \|\exp Z\| \|m\|.
\]
\end{proof}

\subsection{Some spaces of rapidly decreasing functions on $G$}

We retain all of the notation from the previous section.

Let $(\rho_{\chi_{\gamma}},\S_{\chi_{\gamma}})$ be the smooth
Heisenberg representation of $N$ associated to the character
$\chi_{\gamma}$ of $U$, as in (\ref{eq:defrcg}). We first give a
realization of $\S_{\chi_{\gamma}}$, as follows: fix
$\g_{-1}=\e\oplus \f$ (a complete polarization of $\g_{-1}$). Set
$\D(\e)$ to be the space of constant-coefficient differential
operators on $\e$, and let
\[\S(\e)=\set{f\in C^{\infty}(\e)}{\mbox{$p_{Z,d}(f) <\infty$ for
all $Z\in \D(\e)$, $d\in \N$}}\]
be the Schwartz space of $\e$, where
\begin{equation}
p_{Z,d}(f)=\sup_{T\in \e}|Zf(T)|(1+\|T\|)^{d}. \label{eq:pZddefinition}
\end{equation}
 Extend $\chi_{\gamma}$ to $N_{\f}$ by setting
 \[
\chi_{\gamma}(u\exp Z)=\chi_{\gamma}(u), \qquad \mbox{for all  $u\in U$, $Z\in \f$.}
\]

We shall adopt the following notation. For a smooth representation $\sigma$ of a closed subgroup $B$ of a Lie group $A$, let
\begin{equation}
\label{sminduction}
C^{\infty}(B\backslash A; \sigma)=\{f\in C^{\infty}(A; \sigma) \, | \, f(ba)=\sigma (b)f(a), \text{for all $b\in B$ and $a\in A$}\}.
\end{equation}
Through right multiplication, this becomes a representation of $A$ (smoothly induced from $\sigma$). Later, we will also need to consider the space $C_{c}^{\infty}(B\backslash A; \sigma)$ consisting of those elements in $C^{\infty}(B\backslash A; \sigma)$ with compact support modulo $B$.

Given $f\in\C^{\infty}(N_{\f}\backslash N; \chi_{\gamma})$, define $\hat{f}\in C^{\infty}(\e)$ by $\hat{f}(Z)=f(\exp Z)$, for all $Z\in \e$. Conversely, given $f\in C^{\infty}(\e)$, set $\check{f}(n\exp Z)=\chi_{\gamma}( n)f(Z)$ for all $n\in N_{\f}$, $Z\in \e$. Clearly these two maps are inverse of each other, and if we set
\[
\S (N_{\f}\backslash N; \chi_{\gamma})=\{f\in C^{\infty}(N_{\f}\backslash N; \chi_{\gamma}) \, | \, \, \, \hat{f}\in \S(\e)\},
\]
then
\begin{equation}
\S_{\chi_{\gamma}}\cong \S (N_{\f}\backslash N; \chi_{\gamma}). \label{eq:Schigammarealization}
\end{equation}
In the rest of this article we will frequently use this realization of $\S_{\chi_{\gamma}}$ (implicitly).

\begin{remarks} (a) In the literature, the Lie algebras $\u$, $\n$ are frequently denoted by $\n_{2}$ and $\n_{1}$, respectively. When that is the case, the Lie algebra $\n_{\f}$ is often denoted by $\n_{1.5}$.

\noindent (b) When $\k$ is non-Archimedian, we set $\S(\e)$ to be the space of all the locally constant, compactly supported functions on $\e$ (known as the Bruhat-Schwartz space on $\e$). With this definition equation (\ref{eq:Schigammarealization}) remains valid for $\k$ non-Archimedian.
\end{remarks}

\vsp
Let $U(\g)$ be the universal enveloping algebra of $\g$. Given $f\in C^{\infty}(G)$, $Z\in U(\g)$ and $d\in \N$, we set
\[
q_{Z,d}(f)=\sup_{g\in G} |R_{Z}f(g)|\|g\|^{d},
\]
where $R_{Z}$ acts on $C^{\infty}(G)$ via the right regular representation of $U(\g)$. Let
\[
\Sch(G)=\set{f\in C^{\infty}(G)}{\mbox{$q_{X,d}(f) < \infty$, for all $X\in U(\g)$, $d\in \N$}}.
\]
It is easy to check that $\Sch(G)$ is a Fr\'echet space and that
$q_{Z,d}$ is a semi-norm on $\Sch(G)$ for all $Z\in U(\g)$, $d\in \N$.
We call $\Sch(G)$ the \emph{space of rapidly decreasing functions} on
$G$.

We now define certain space of rapidly decreasing functions on $N\backslash G$. Given $f\in C^{\infty}(N\backslash G;\S_{\chi_{\gamma}})$, $Z_{1}\in
U(\g)$, $Z_{2}\in \D(\e)$, $d_{1}$, $d_{2}\in \N$, set
\begin{equation}\label{eq:seminormonGmodN}
q_{Z_{1},Z_{2},d_{1},d_{2}}(f) = \sup_{k\in K,\, m\in M,\, T\in \e} |Z_{2}(R_{Z_{1}}f(mk))(T)|(1+\|T\|)^{d_{2}}\|m\|^{d_{1}}.
\end{equation}
Then we define
\[
\Sch(N \backslash G;\S_{\chi_{\gamma}})=\{f\in C^{\infty}(N\backslash G;\S_{\chi_{\gamma}}) \, | \,\mbox{$q_{Z_{1},Z_{2},d_{1},d_{2}}(f)<\infty$ for all $q_{Z_{1},Z_{2},d_{1},d_{2}}$ as in (\ref{eq:seminormonGmodN})}\}.
\]
Observe that
\[
q_{Z_{1},Z_{2},d_{1},d_{2}}(f)=\sup_{k\in K, m\in M}p_{Z_{2},d_{2}}(R_{Z_{1}}f(mk))\|m\|^{d_{1}},
\]
where $p_{Z_{2},d_{2}}$ is as in (\ref{eq:pZddefinition}). In general, given a smooth representation of $N$, $(\tau,\HH)$, and a seminorm $\rho$ on $\HH$, we set
\begin{equation}
q_{Z,d,\rho}(f)=\sup_{k\in K, m\in M}\rho(R_{Z}f(mk))\|m\|^{d}, \label{eq:generalNG}
\end{equation}
and define
\[
\Sch(N \backslash G;\HH)=\{f\in C^{\infty}(N\backslash G;\HH) \, | \,\mbox{$q_{Z,d,\rho}(f)<\infty$ for all $q_{Z,d,\rho}$ as in (\ref{eq:generalNG})}\}.
\]

Recall that there exists a covering $M_{\chi_{\gamma}} \twoheadrightarrow M_{X}$ such that $(\rho_{\chi_{\gamma}},\S_{\chi_{\gamma}})$ extends to a representation of $M_{\chi_{\gamma}}\ltimes N$. Using this extension, we define a natural action of $M_{\chi_{\gamma}}$ on $\Sch(N \backslash G;\S_{\chi_{\gamma}})$ by
\begin{equation}
m\cdot f(g)=\rho_{\chi_{\gamma}}(m)f(\bar{m}^{-1}g), \qquad \mbox{for all $m\in M_{\chi_{\gamma}}$, $g\in G$.} \label{eq:mactiononNG}
\end{equation}
Here $\bar{m}$ is the image of $m$ under the map $M_{\chi_{\gamma}}\twoheadrightarrow M_{X}$.

Now, given $f\in C^{\infty}(N_{\f} \backslash G;\chi_{\gamma})$, $Z\in U(\g)$ and $d\in \N$, we set
\[
q_{Z,d}(f)=\sup_{k\in K, m\in M, T\in \e} |R_{Z}f((\exp T)mk)|\|(\exp T)mk\|^{d},
\]
and define
\[
\Sch(N_{\f} \backslash G;\chi_{\gamma})=\{f\in C^{\infty}(N_{\f} \backslash G;\chi_{\gamma})\, | \, \mbox{$q_{Z,d}(f) <\infty$ for all $Z\in U(\g), d\in \N$}\}.
\]

\begin{lemma}
\label{lemma:equalityss}
As $G$-modules, we have $\Sch(N_{\f} \backslash G;\chi_{\gamma})\cong \Sch(N \backslash G;\S_{\chi_{\gamma}})$.
\end{lemma}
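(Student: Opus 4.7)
The plan is to construct the isomorphism induced by the realization $\S_{\chi_\gamma}\cong\S(N_\f\backslash N;\chi_\gamma)$ of \eqref{eq:Schigammarealization}. For $f\in\Sch(N_\f\backslash G;\chi_\gamma)$ define
\[
\Phi(f)(g)(n):=f(ng),\qquad g\in G,\ n\in N.
\]
The $\chi_\gamma$-equivariance of $f$ under $N_\f$ makes $\Phi(f)(g)$ well-defined as an element of $C^\infty(N_\f\backslash N;\chi_\gamma)$, and $\Phi(f)(n'g)(n)=f(nn'g)=\Phi(f)(g)(nn')=\bigl(\rho_{\chi_\gamma}(n')\Phi(f)(g)\bigr)(n)$, so $\Phi(f)$ is $N$-equivariant. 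The candidate inverse is $\Psi(F)(g):=F(g)(1_N)$, which is well-defined because the $N$-equivariance of $F$ forces $\Psi(F)(n_\f g)=\chi_\gamma(n_\f)\Psi(F)(g)$. Both $\Phi$ and $\Psi$ obviously commute with right translation by $G$, so only the continuity in both directions remains.

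One direction is immediate. Under the further realization $\S_{\chi_\gamma}\cong\S(\e)$ we have $\widehat{\Phi(f)(mk)}(T)=f((\exp T)mk)$ for $T\in\e$, whence differentiating under the evaluation yields $(R_{Z_1}\Phi(f)(mk))(\exp T)=(R_{Z_1}f)((\exp T)mk)$ for $Z_1\in U(\g)$. Using $\|k\|=1$ and submultiplicativity of $\|\cdot\|$ together with the bound $\|\exp T\|\leq C_3(1+\|T\|)^{2r}$ from \eqref{eq:nZinequality} gives $\|(\exp T)mk\|\leq C_3(1+\|T\|)^{2r}\|m\|$, hence
\[
q_{Z,d}(f)\leq C_3^{\,d}\,q_{Z,\mathrm{id},d,2rd}(\Phi(f)),
\]
establishing continuity of $\Psi$.

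For the continuity of $\Phi$, fix $Z_2=\partial_{S_1}\cdots\partial_{S_j}$ with $S_i\in\e$. Since $[\e,\e]\subset\g_{-2}=\u$ and $\n$ is nilpotent, the Baker--Campbell--Hausdorff formula on $\n$ terminates in a polynomial; after moving the resulting $U$-valued correction to the left through the normal subgroup $U\triangleleft N$ one writes
\[
\exp\!\Bigl(T+\sum_i s_iS_i\Bigr)=v(T,s)\cdot\exp T\cdot\exp\!\Bigl(\sum_i s_iS_i\Bigr),
\]
with $v(T,s)\in U$ polynomial in $(T,s)$ and $v(T,0)=1$. The $\chi_\gamma$-equivariance of $f$ under $U$ then absorbs $v(T,s)$ into the scalar factor $\chi_\gamma(v(T,s))$, whose $s$-derivatives at $s=0$ are polynomial in $T$ and of unit modulus. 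Combining this with the identity $\exp T\cdot\exp(\sum s_iS_i)\cdot mk=(\exp T\cdot mk)\cdot\exp(\sum s_i\Ad(mk)^{-1}S_i)$ and expanding each $\Ad(mk)^{-1}S_i$ in a fixed basis of $\g$ with coefficients of polynomial growth in $\|m\|$ (since $\|k\|=1$), iterated differentiation and Leibniz yield a finite expansion
\[
Z_2\bigl(R_{Z_1}\Phi(f)(mk)\bigr)(T)=\sum_\alpha c_\alpha(T,m,k)\,(R_{W_\alpha Z_1}f)((\exp T)mk),
\]
with $W_\alpha$ running over a fixed finite family in $U(\g)$ and $|c_\alpha(T,m,k)|$ of polynomial growth in $\|T\|$ and $\|m\|$. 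Finally Lemma \ref{lemma:ntildeninequality} and Corollary \ref{cor:nmktildeninequality} with $\tilde n=1$ give $\|(\exp T)mk\|^{d_0}\geq C_0C_1(1+\|T\|)\|m\|$, which absorbs the factor $(1+\|T\|)^{d_2}\|m\|^{d_1}\cdot c_\alpha$ into a single power of $\|(\exp T)mk\|$ and bounds $q_{Z_1,Z_2,d_1,d_2}(\Phi(f))$ by a finite sum of seminorms $q_{W_\alpha Z_1,D}(f)$.

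The main obstacle is the bookkeeping in the BCH expansion and the $\Ad(mk)^{-1}$-twist: one must verify that the finite set $\{W_\alpha\}$ is independent of $T,m,k$ and that $c_\alpha(T,m,k)$ satisfies uniform polynomial bounds, both of which rest on the nilpotency of $\n$ (truncating BCH to a polynomial formula) and on $\Ad(g)$ depending polynomially on matrix entries. Once this is carried out, $\Phi$ and $\Psi$ are mutually inverse continuous $G$-equivariant maps, proving the lemma. The non-Archimedean case is strictly simpler, since the Bruhat--Schwartz space on $\e$ consists of compactly supported locally constant functions and only support conditions, not differentiability or decay, need to be matched.
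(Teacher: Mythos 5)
Your proposal is correct and follows essentially the same route as the paper: the same pair of mutually inverse maps (evaluation at the identity coset of $N$, and $g\mapsto\bigl(T\mapsto f(\exp(T)g)\bigr)$), the same norm inequalities \eqref{eq:nZinequality}, \eqref{eq:ntildeZ} and Corollary \ref{cor:nmktildeninequality}, and the same use of moderate growth of the finite-dimensional representation $(\Ad,U^{l}(\g))$ to control the $\Ad(mk)^{-1}$-twist. The only difference is that you spell out the Baker--Campbell--Hausdorff correction $v(T,s)\in U$ (which in fact contributes trivially, since only the $\g_{-2}$-component pairs with $X$ and $\kappa(X,[\e,\e])=0$), a point the paper passes over by directly identifying $Z_{2}$ with an element of $U^{l}(\g)$.
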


\begin{proof}
Given $f\in \Sch(N \backslash G;\S_{\chi_{\gamma}})$, define $\hat{f}\in C^{\infty}(N_{\f} \backslash G;\chi_{\gamma})$ by $\hat{f}(g)=f(g)(0)$. We claim that $\hat{f}\in \Sch(N_{\f} \backslash G;\chi_{\gamma})$. Effectively, given $Z\in U(\g)$, $d\in \N$,
\begin{eqnarray*}
q_{Z,d}(\hat{f})  & = & \sup_{k\in K, m\in M, T\in \e} |R_{Z}\hat{f}((\exp T)mk)|\|(\exp T)mk\|^{d}\\
             & = & \sup_{k\in K, m\in M, T\in \e} |R_{Z}f(nmk)(0)|\|(\exp T)mk\|^{d}\\
             & \leq & \sup_{k\in K, m\in M, T\in \e} |R_{Z}f(mk)(T)|\|mk\|^{d}\|\exp T\|^{d}\\
             & \leq & \sup_{k\in K, m\in M, T \in \e} C_{3}|R_{Z}f(mk)(T)|\|mk\|^{d}(1+\|T\|)^{2rd}\\
            & \leq & C_{3}q_{Z,1,d,2rd}(f) <\infty,
\end{eqnarray*}
where we have used equation (\ref{eq:nZinequality}) in the next to last equality.

Now given $f\in \Sch(N_{\f} \backslash G;\chi_{\gamma})$, set $\check{f}(g)(T)=f(\exp (T)\,g)$, for all $g\in G$, $T\in \e$. We claim that $\check{f}\in \Sch(N \backslash G;\S_{\chi_{\gamma}})$. Effectively, given $Z_{1}\in U(\g)$, $Z_{2}\in \D(\e)$, $d_{1}$, $d_{2}\in \N$, we have that
\begin{eqnarray*}
q_{Z_{1},Z_{2},d_{1},d_{2}}(\check{f}) & = &  \sup_{k\in K, m\in M, T\in \e} |Z_{2}(R_{Z_{1}}\check{f}(mk))(T)|  \|mk\|^{d_{1}}(1+\|T\|)^{d_{2}}\\
                & = &  \sup_{k\in K, m\in M, T\in \e} |(R_{\Ad(mk)^{-1} Z_{2}Z_{1}}f(\exp (T) mk )|  \|mk\|^{d_{1}}(1+\|T\|)^{d_{2}},
\end{eqnarray*}
where we have identified $Z_{2}$ with an element in $U^{l}(\g)$ for some $l$. Let $\{\tilde{Z}_{1},\ldots,\tilde{Z}_{s}\}$ be a basis of $U^{l}(\g)$. Then
\[
\Ad(mk)^{-1} Z_{2}=\sum_{j=1}^{s} a_{j}(mk)\tilde{Z}_{j},
\]
for some functions $a_{j}$. Since $(\Ad, U^{l}(\g))$ is finite
dimensional, it is of moderate growth, and hence
there exists  constants $C_{l}$, $d_{l}>0$ such that
$|a_{j}(mk)|\leq C_{l}\|mk\|^{d_{l}}$, for all $k\in K$, $m\in M$.
From this, equations (\ref{eq:ntildeZ}) and (\ref{eq:ntildenminequality}), we have
\begin{eqnarray*}
q_{Z_{1},Z_{2},d_{1},d_{2}}(\check{f}) & \leq & C_{l}\sum_{j=1}^{s} \sup_{k\in K, m\in M, T\in \e}  |R_{\tilde{Z}_{j}Z_{1}}f(\exp (T) mk )|  \|mk\|^{d_{1}+d_{l}}(1+\|T\|)^{d_{2}} \\
   & \leq & \frac{C_{l}}{C_{1}^{d_{2}}}\sum_{j=1}^{s} \sup_{k\in K, m\in M, T\in \e}  |R_{\tilde{Z}_{j}Z_{1}}f(\exp (T) mk )|  \|mk\|^{d_{1}+d_{l}}\|\exp T\|^{d_{2}} \\
   & \leq & \frac{C_{l}}{C_{1}^{d_{2}}}\sum_{j=1}^{s} \sup_{k\in K, m\in M, T\in \e}  |R_{\tilde{Z}_{j}Z_{1}}f(\exp (T) mk )|  \|\exp (T) mk\|^{d_{1}+d_{l}+2d_{2}} \\
   & \leq & \frac{C_{l}}{C_{1}^{d_{2}}}\sum_{j=1}^{s} q_{\tilde{Z}_{j}Z_{1},d_{1}+d_{l}+2d_{2}}(f) <\infty.
\end{eqnarray*}
Finally, we easily check that $\check{\hat{f}}=f$ and
$\hat{\check{f}}=f$.
\end{proof}

Given $f\in \Sch(G)$, define a function $f_{\chi_{\gamma}}\in C^{\infty}(N_{\f}\backslash G;\chi_{\gamma})$ by
\[
f_{\chi_{\gamma}}(g)=\int_{N_{\f}} f(\tilde{n}g)\chi_{\gamma}(\tilde{n})^{-1}\, d\tilde{n}.
\]

\begin{lemma}
\label{lemma:smFrobenius}
The map $f\mapsto f_{\chi_{\gamma}}$ defines a surjective $G$-intertwining map from $\Sch(G)$ to the space $\Sch(N_{\f} \backslash G;\chi_{\gamma})$.
\end{lemma}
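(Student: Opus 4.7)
The plan is as follows: first, I verify that the integral defining $f_{\chi_\gamma}$ converges, lands in $\Sch(N_\f\backslash G;\chi_\gamma)$ with continuous dependence on $f$, and intertwines the right-regular $G$-action; second, I construct an explicit right-inverse to establish surjectivity.

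For the first part, given $f\in\Sch(G)$ and any $Z\in U(\g)$, $d'\in\N$, the rapid decay $|R_Z f(g)|\le q_{Z,d'}(f)\|g\|^{-d'}$ together with Corollary \ref{cor:nmktildeninequality} in the form $\|\tilde n(\exp T)mk\|^{d_0}\ge C_0\|\tilde n\|\|\exp T\|\|m\|$ gives, after differentiating under the integral,
\[
|R_Z f_{\chi_\gamma}(\exp(T)mk)|\le C_0^{-d'/d_0}q_{Z,d'}(f)\,\|\exp T\|^{-d'/d_0}\|m\|^{-d'/d_0}\int_{N_\f}\|\tilde n\|^{-d'/d_0}\,d\tilde n.
\]
Since $N_\f$ is a nilpotent Lie group with polynomial volume growth in the norm $\|\cdot\|$, the remaining integral is finite for $d'$ sufficiently large, and choosing $d'/d_0>d$ yields $q_{Z,d}(f_{\chi_\gamma})\le C\,q_{Z,d'}(f)<\infty$. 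The transformation law $f_{\chi_\gamma}(n_0 g)=\chi_\gamma(n_0)f_{\chi_\gamma}(g)$ follows from the substitution $\tilde n\mapsto\tilde n n_0^{-1}$ and the unimodularity of $N_\f$, while $G$-intertwining is immediate from the right-invariance of the integral.

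For surjectivity, I use the decomposition $G=PK$ with $P=NM$, combined with $N=N_\f\cdot\exp(\e)$ (coming from $N_\f\triangleleft N$ with $N/N_\f\cong\e$), so that every $g\in G$ has a uniquely defined factor $n_\f(g)\in N_\f$ with $n_\f(g)^{-1}g\in\exp(\e)MK$; the residual $(K\cap M)$-ambiguity in $(m,k)$ does not affect the product $\exp(T)\cdot mk=n_\f(g)^{-1}g$. Fix $\phi\in C_c^\infty(N_\f)$ with $\int_{N_\f}\phi(\tilde n)\chi_\gamma(\tilde n)^{-1}\,d\tilde n=1$ (possible since this linear functional is nonzero on $C_c^\infty(N_\f)$). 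Given $h\in\Sch(N_\f\backslash G;\chi_\gamma)$ with lift $\hat h$ on $G$ satisfying $\hat h(\tilde n g)=\chi_\gamma(\tilde n)\hat h(g)$, define
\[
f(g):=\phi(n_\f(g))\,\hat h(n_\f(g)^{-1}g).
\]
Using the identity $n_\f(\tilde n g)=\tilde n\,n_\f(g)$ and substituting $\tilde n'=\tilde n\,n_\f(g)$ in the defining integral one readily obtains $f_{\chi_\gamma}(g)=\chi_\gamma(n_\f(g))\,\hat h(n_\f(g)^{-1}g)=\hat h(g)=h(g)$.

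The main technical step is verifying $f\in\Sch(G)$. Since $\phi$ is compactly supported, $n_\f(g)$ is bounded on $\supp(f)$; combined with $\|k\|=1$ for $k\in K$ and the submultiplicativity of $\|\cdot\|$ on $G$, this gives $\|g\|^d\le C''\|\exp T\|^d\|m\|^d$ uniformly on $\supp(f)$. Expressing $R_Z f$ in the coordinates $(n_\f,T,m,k)$ via the chain rule produces a finite sum of products of bounded derivatives of $\phi$ in the $n_\f$-variable and translated differential operators applied to $\hat h(\exp(T)mk)$, whose boundedness on $\supp(f)$ is controlled by the semi-norms $q_{Z',d_1,d_2}$ of $h\in\Sch(N_\f\backslash G;\chi_\gamma)$ (with polynomial decay in $\|T\|$ converted to polynomial decay in $\|\exp T\|$ via (\ref{eq:nZinequality})). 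This yields $q_{Z,d}(f)<\infty$ for all $Z$, $d$, completing the proof.
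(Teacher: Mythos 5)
Your proposal is correct and follows essentially the same route as the paper: the same estimate combining the rapid decay of $f$ with Corollary \ref{cor:nmktildeninequality} and convergence of $\int_{N_{\f}}\|\tilde{n}\|^{-s}\,d\tilde{n}$ for large $s$, and the same surjectivity construction via a bump function $\phi\in C_{c}^{\infty}(N_{\f})$ with $\int_{N_{\f}}\phi\,\chi_{\gamma}^{-1}=1$ multiplied against the given function in the $N_{\f}$-coordinate. You merely spell out the well-definedness of the factorization $g=n_{\f}(g)\exp(T)mk$ and the verification that the preimage lies in $\Sch(G)$, which the paper leaves as ``clear''.
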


\begin{proof}
We will first show that $f_{\chi_{\gamma}}\in \Sch(N_{\f} \backslash G;\chi_{\gamma})$. Given $Z\in U(\g)$, $d\in \N$, we have that
\begin{eqnarray*}
q_{Z,d}(f_{\chi_{\gamma}}) & = & \sup_{k\in K, m\in M, T \in \e}  |R_{Z}f_{\chi_{\gamma}}((\exp T)mk)|\|(\exp T)mk\|^{d} \\
                & \leq & \sup_{k, m, T} \int_{N_{\f}} |R_{Z}f_{\chi_{\gamma}}(\tilde{n} (\exp Z)mk)|\, d\tilde{n}\, \|(\exp T)mk\|^{d}.
\end{eqnarray*}
Now we know that for all $d_{1}\in \N$, $|R_{Z}f(\tilde{n} (\exp Z)mk)|\|\tilde{n} (\exp Z)mk\|^{d_{1}}\leq q_{Z,d_{1}}(f)$. Hence, from Corollary \ref{cor:nmktildeninequality},
\begin{eqnarray*}
q_{Z,d}(f_{\chi_{\gamma}}) & \leq & \sup_{k\in K, m\in M, T \in \e} q_{Z,d_{1}}(f) \int_{N_{\f}} \|\tilde{n} (\exp Z)mk\|^{-d_{1}}\, d\tilde{n}\|(\exp T)mk\|^{d} \\
                & \leq & \sup_{k\in K, m\in M, T \in \e} q_{Z,d_{1}}(f) \int_{N_{\f}} (C_{0}\|\tilde{n}\| \|\exp T\| \|m\|)^{-\frac{d_{1}}{d_{0}}} (\|\exp T\|\|m\|)^{d} \, d\tilde{n} \\
         & = & \sup_{k\in K, m\in M, T \in \e} q_{Z,d_{1}}(f)  C_{0}^{-\frac{d_{1}}{d_{0}}} (\|\exp T\|\|m\|)^{d-\frac{d_{1}}{d_{0}}}  \int_{N_{\f}} \|\tilde{n}\|^{-\frac{d_{1}}{d_{0}}} \, d\tilde{n}.
\end{eqnarray*}
But it is clear that, if $d_{1} \gg 0$, then the right hand side of the above equation is finite.

Now we will show that the map is surjective. Fix a function $\phi \in C_{c}^{\infty}(N_{\f})$ such that
\[
\int_{N_{\f}} \phi(\tilde{n})\chi_{\gamma}(\tilde{n})^{-1} \, d\tilde{n}=1.
\]
 Given $f\in \Sch(N_{\f} \backslash G;\chi_{\gamma})$, let $h\in C^{\infty}(G)$ be given by $h(\tilde{n} (\exp Z)mk)=\phi(\tilde{n})f(nmk)$. Then it is clear that $h\in \Sch(G)$ and $h_{\chi_{\gamma}}(nmk)=f(nmk)$ for all $k\in K$, $m\in M$ and $Z\in \e$. From all this we conclude that the map $f\mapsto f_{\chi_{\gamma}}$ is  surjective.
\end{proof}

\begin{remarks} (a) Lemmas \ref{lemma:equalityss} and \ref{lemma:smFrobenius} are a form of induction by stages and Frobenius reciprocity, in the ``rapidly decreasing" context.
 The key point is of course to establish the relevant estimates.

\noindent (b) When $\k$ is non-Archimedian, we set $\Sch(G)$ to be the space of locally constant, compactly supported functions, with analogous definitions for $\Sch(N\backslash G;\S_{\chi_{\gamma}})$ and $\Sch(N_{\f}\backslash G;\chi_{\gamma})$. With these definitions, it is straightforward to check that all the results in this section remain valid in the non-Archimedian case.
\end{remarks}

\subsection{Realizing a generalized Whittaker model on $N\backslash G$} In this section, we give the promised realization of the space of generalized Whittaker models for a Casselman-Wallach representation of $G$. But before stating this result, we need the following technical lemma.

\begin{lemma}\label{lemma:invariantSchwartz} Let $(\pi, \mathscr{V})$ be a Casselman-Wallach
 representation of $G$. Then
\[
(\mathscr{V}')^{N_{\f},\chi_{\gamma}}\cong \Wh_{\gamma}(\pi),
\]
where $(\mathscr{V}')^{N_{\f},\chi_{\gamma}}$ denotes the $(N_{\f},\chi_{\gamma})$-isotypic subspace of $\mathscr{V}'$.
\end{lemma}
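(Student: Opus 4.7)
The plan is to realize the isomorphism as a smooth Frobenius reciprocity attached to the realization $\S_{\chi_\gamma} \cong \S(N_\f \backslash N; \chi_\gamma)$ of \eqref{eq:Schigammarealization}, by producing mutually inverse continuous linear maps. Define $\Phi: \Wh_\gamma(\pi) \to (\mathscr{V}')^{N_\f, \chi_\gamma}$ by evaluation at the identity, $\Phi(\lambda)(v) := \lambda(v)(1_N)$; the transformation law in $\S(N_\f \backslash N; \chi_\gamma)$ together with $N$-equivariance of $\lambda$ forces $\Phi(\lambda)(\pi(n_\f) v) = \chi_\gamma(n_\f) \Phi(\lambda)(v)$, while continuity is immediate from continuity of evaluation at $1_N$ on the Fr\'echet space $\S(\e)$. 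Conversely, for $\mu \in (\mathscr{V}')^{N_\f, \chi_\gamma}$ set $\Psi(\mu)(v)(n) := \mu(\pi(n)v)$; smoothness of $\pi$ and the $(N_\f, \chi_\gamma)$-character of $\mu$ place $\Psi(\mu)(v)$ in $C^\infty(N_\f \backslash N; \chi_\gamma)$, and the $N$-intertwining of $v \mapsto \Psi(\mu)(v)$ is direct.

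The main obstacle is to verify the Schwartz condition for $\Psi(\mu)(v)$, namely that $\xi_v(Z) := \mu(\pi(\exp Z)v)$ together with all its constant-coefficient derivatives on $\e$ are rapidly decreasing. Here the Casselman-Wallach hypothesis enters crucially: $\mu$ is continuous for some $U(\g)$-Sobolev seminorm on $\mathscr{V}$, so the matrix coefficient $g \mapsto \mu(\pi(g) v)$ is of moderate growth on $G$. The character law of $\mu$ on $U \subset N_\f$ upgrades this moderate growth to polynomial decay via the following integration-by-parts. For $W \in \u$ the identity $\exp(Z)\exp(tW) = \exp(t\, e^{\ad Z} W)\exp(Z)$ combined with $\mu(\pi(\exp u)\cdot) = \psi(\kappa(X,u))\,\mu(\cdot)$ for $u \in \u$ gives, after differentiation at $t = 0$,
\[
P_W(Z)\, \xi_v(Z) \;=\; (2\pi i)^{-1}\, \mu\bigl(\pi(\exp Z)\, d\pi(W)\, v\bigr), \qquad P_W(Z) := \kappa\bigl(e^{-\ad Z} X,\, W\bigr),
\]
where $P_W(Z)$ is a polynomial in $Z$ with $P_W(0) = \kappa(X, W)$. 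Choosing a finite family $\{W_i\} \subset \u$ so that $\sum_i |P_{W_i}(Z)|$ dominates a fixed positive polynomial on $\e$, and invoking moderate growth for the right-hand side, produces polynomial decay of $\xi_v$ to arbitrary order; the same identity applied to $d\pi$-translates of $v$, combined with the commutation of directional derivatives on $\e$ with the evaluation map, extends this to Schwartz decay of every derivative of $\xi_v$. This argument is in the spirit of Wallach's rapid-decay estimates for Whittaker functionals.

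Once the Schwartz condition is in place, continuity of $\Psi(\mu): \mathscr{V} \to \S(\e)$ follows from the uniform Sobolev bounds appearing in the above estimates, and a direct computation shows $\Phi$ and $\Psi$ are mutually inverse: clearly $\Phi \circ \Psi(\mu)(v) = \mu(v)$, while
\[
\Psi \circ \Phi(\lambda)(v)(n) \;=\; \Phi(\lambda)(\pi(n)v) \;=\; \lambda(\pi(n)v)(1_N) \;=\; (\rho_{\chi_\gamma}(n)\lambda(v))(1_N) \;=\; \lambda(v)(n),
\]
using the realization of $\rho_{\chi_\gamma}$ as right translation on $\S(N_\f \backslash N; \chi_\gamma)$.
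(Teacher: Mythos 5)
Your overall skeleton is the same as the paper's: mutually inverse maps given by evaluation at the identity and by the matrix--coefficient map $T\mapsto\mu(\pi(\exp T)v)$ on $\e$, with the Casselman--Wallach (moderate growth) hypothesis supplying the a priori polynomial bound. The genuine gap is in your decay mechanism. For $W\in\u$ the function $P_W(Z)=\kappa\bigl(e^{-\ad Z}X,\,W\bigr)$ is \emph{constant} in $Z$: since $Z\in\e\subset\g_{-1}$, one has $(\ad Z)^mX\in\g_{2-m}$, and $\kappa(\g_{2-m},\g_{-k})=0$ for every $k\geq 2$ except when $m=0$ and $k=2$; hence $P_W(Z)=\kappa(X,W)$ identically. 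Consequently no finite family $\{W_i\}\subset\u$ can make $\sum_i|P_{W_i}(Z)|$ dominate a polynomial of positive degree, and your identity $P_W(Z)\,\xi_v(Z)=c\,\mu(\pi(\exp Z)d\pi(W)v)$ yields no decay at all. (Concretely, in the basic Heisenberg situation $\u=\g_{-2}$ is central in $\n$, so $\exp Z$ commutes with $\exp tW$ and the identity is a tautology.) Since the rapid-decay verification is the entire content of the lemma, this step as written fails.

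The repair is to draw the multiplication operators not from $\u$ but from the other half $\f$ of the polarization, i.e.\ from $\n_\f\setminus\u$: for $S\in\f$ one has $\exp(T)\exp(sS)=\exp\bigl(s\,\Ad(\exp T)S\bigr)\exp(T)$ with $\Ad(\exp T)S\equiv S+[T,S]$ modulo $\g_{\leq -3}$, and since $\kappa(X,\f)=0$ the character contributes $\psi\bigl(s\,\kappa(X,[T,S])\bigr)=\psi\bigl(s\,\kappa_{-1}(T,S)\bigr)$; differentiating at $s=0$ shows that $d\pi(S)$ acts on $\xi_v$ by multiplication by the \emph{linear} function $T\mapsto\kappa_{-1}(T,S)$ (up to the constant $\psi'(0)$), and these span $\e^{*}$ because $\kappa_{-1}$ is a perfect pairing between $\e$ and $\f$. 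Combining this with the fact that derivatives along $\e$ correspond to $d\pi(R)$, $R\in\e$, and with the moderate-growth bound whose exponent is independent of $v$, one gets that every (polynomial)$\times$(derivative) of $\xi_v$ is again bounded by a polynomial of the \emph{same fixed} degree, which forces $\xi_v\in\S(\e)$. This is precisely the paper's argument via equations (\ref{eq:derivative}), (\ref{eq:multiplication}) and (\ref{eq:moderategrowth}); with $\u$ replaced by $\f$ in your integration-by-parts step, the rest of your proposal goes through.
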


\begin{proof}
Given $\lambda\in \Wh_{\gamma}(\pi)$, set
$\hat{\lambda}(v)=\lambda(v)(0)$, for $v\in \mathscr{V}$. Then
it is clear that $\hat{\lambda}\in
(\mathscr{V}')^{N_{\f},\chi_{\gamma}}$. On the other hand, given
$\lambda \in (\mathscr{V}')^{N_{\f},\chi_{\gamma}}$, set
$\check{\lambda}(v)(T)=\lambda(\pi(\exp T)\, v)$ for $v\in
\mathscr{V}$, $T\in \e$. Then it is clear that $\check{\lambda}(v)\in
C^{\infty}(\e)$, but we claim that it is actually in $\S(\e)$. To see
this, observe that if $R\in \e$, then
\begin{equation}\label{eq:derivative}
\check{\lambda}(d\pi(R)v)(T)=D_{R}\check{\lambda}(v)(T), \qquad \mbox{for all $T\in \e$,}
\end{equation}
where $D_{R}\in \D(\e)$ represents the derivative in the direction $R$. On the other hand, if $S \in \f$, then \begin{equation}\label{eq:multiplication}
\check{\lambda}(d\pi(S)v)(T)=d\chi_{\gamma}(\kappa(S,T))\check{\lambda}(v)(T), \qquad \mbox{for all $T\in \e$,}
\end{equation}
where $d\chi_{\gamma}$ is the linear functional given by the derivative of the character $\chi_{\gamma}$.
But now, since $(\pi, \mathscr{V})$ is a representation of moderate growth, we can find a constant $d>0$ such that for all $v\in \mathscr{V}$, there exists $C_{\lambda,v}>0$ such that
\begin{equation}\label{eq:moderategrowth}
|\check{\lambda}(v)(T)|=|\lambda(\pi(\exp T)\, v)|\leq C_{\lambda,v} \|\exp T\|^{d}\leq C_{3}C_{\lambda,v}(1+\|T\|)^{2dr},\qquad \mbox{for all $T\in \e$.}
\end{equation}
Here $C_3$ and $r$ are as in equation (\ref{eq:nZinequality}). Observe that, although the constant $C_{\lambda,v}$ depends on $v$
(and on $\lambda$), $d$ is independent of the element $v\in
\mathscr{V}$ chosen. Therefore, from equations
(\ref{eq:derivative}), (\ref{eq:multiplication}) and
(\ref{eq:moderategrowth}) for all $v\in  \mathscr{V}$ the function
$\check{\lambda}(v)$ is such that if we take any partial
derivatives, and multiply by any polynomial on $\e$, it still has growth
controlled by a polynomial of degree $2dr$. But this implies that
$\check{\lambda}(v)\in \S(\e)$, as we wanted to show.

To finish the proof we just have to show that $\hat{\check{\lambda}}=\lambda$, and $\check{\hat{\lambda}}=\lambda$, but this follow easily from the definitions.
\end{proof}

Given an $\sl_{2}$-triple $\gamma=\{X,H,Y\}\subset \g$, set
\begin{equation}
\label{eq:defgammacheck} \check{\gamma}=\{-X,H,-Y\}.
\end{equation}
Then, it is immediate to check that $\check{\gamma}$ is also an
$\sl_{2}$-triple and that
$\chi_{\check{\gamma}}=\check{\chi}_{\gamma}$, where
$\check{\chi}_{\gamma}$ is the character of $U$ given by
$\check{\chi}_{\gamma}(u)=\chi_{\gamma}(u)^{-1}$ for all $u\in U$.

\begin{proposition}\label{prop:SchwartzWhittakerModels}
Let $(\pi,\mathscr{V})$ be an irreducible Casselman-Wallach representation of $G$, and let $\Sch(N\backslash G;\S_{\chi_{\check{\gamma}}})_{G,\pi}$ denote the maximal $\pi$-isotypic quotient of
$\Sch(N\backslash G;\S_{\chi_{\check{\gamma}}})$. Then, as an $M_{\chi_{\gamma}}\times G$-module,
\[
\Sch(N\backslash G;\S_{\chi_{\check{\gamma}}})_{G,\pi}\cong  W_{\gamma}(\check{\pi}) \otimes \pi,
\]
where $\check{\pi}$ is the contragredient Casselman-Wallach representation of $\pi$ and $W_{\gamma}(\check{\pi})$ is the continuous dual of $\Wh_{\gamma}(\check{\pi})$. Here $\otimes$ stands for the completed projective tensor product (of two locally convex topological spaces).
\end{proposition}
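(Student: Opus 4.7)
The plan is to establish a Frobenius-type reciprocity
\[
\Hom_G(\Sch(N\backslash G; \S_{\chi_{\check{\gamma}}}), \pi) \cong \Wh_{\gamma}(\check{\pi}),
\]
and then invoke a Schur-type argument for Casselman-Wallach representations. First I would realize $\Sch(N\backslash G; \S_{\chi_{\check{\gamma}}})$ as a ``Schwartz induction'' from $N$ to $G$ (the Archimedean analog of compact induction), for which the appropriate Frobenius reciprocity yields
\[
\Hom_G(\Sch(N\backslash G; \S_{\chi_{\check{\gamma}}}), \pi) \cong \Hom_N(\S_{\chi_{\check{\gamma}}}, \pi).
\]
Using the canonical $N$-equivariant duality $\S_{\chi_{\check{\gamma}}} \cong \S_{\chi_{\gamma}}'$ (the two oscillator-Heisenberg representations have opposite central characters and are in natural duality), together with the reflexivity of Casselman-Wallach representations, the right-hand side transposes to $\Hom_N(\check{\pi}, \S_{\chi_{\gamma}}) = \Wh_{\gamma}(\check{\pi})$.

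Next I would apply a Schur-type argument: for $\pi$ irreducible Casselman-Wallach and any smooth $G$-module $X$ of moderate growth, the maximal $\pi$-isotypic quotient satisfies $X_{G, \pi} \cong \Hom_G(X, \pi)' \otimes \pi$, where $'$ denotes the continuous dual and $\otimes$ the completed projective tensor product. Combining with the reciprocity above gives
\[
\Sch(N\backslash G; \S_{\chi_{\check{\gamma}}})_{G, \pi} \cong \Wh_{\gamma}(\check{\pi})' \otimes \pi = W_{\gamma}(\check{\pi}) \otimes \pi
\]
as $G$-modules. For the $M_{\chi_{\gamma}}$-equivariance, I would verify that the $M_{\chi_{\gamma}}$-action on the left-hand side, given by \eqref{eq:mactiononNG}, is compatible with these identifications, corresponding on the right to the natural dual action on $W_{\gamma}(\check{\pi})$ (and trivial action on the $\pi$-factor, since $M_{\chi_{\gamma}}$ acts on $\mathscr{V}$ only via its image in $G$).

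The main obstacle is to make the Frobenius reciprocity for Schwartz induction and the Schur-type identification precise in the Archimedean Fr\'echet/Casselman-Wallach setting. This requires convergence estimates---building on the rapidly-decreasing estimates from Section \ref{frechet}---to construct the reciprocity isomorphism via matrix coefficients, together with automatic continuity and admissibility of Casselman-Wallach representations to ensure the completed projective tensor product behaves well. Care is also needed for the $M_{\chi_{\gamma}}$-equivariance in handling the metaplectic cover. For $\k$ non-Archimedean, the argument reduces to a purely algebraic one using compact induction, algebraic tensor products, and ordinary Frobenius reciprocity.
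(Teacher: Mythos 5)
Your target identity is correct, but the two steps you use to reach it are both false as stated, and they would not survive being made precise. Schwartz induction from the closed unipotent subgroup $N$ is \emph{not} left adjoint to restriction: the claimed reciprocity $\Hom_G(\Sch(N\backslash G;\S_{\chi_{\check{\gamma}}}),\pi)\cong\Hom_N(\S_{\chi_{\check{\gamma}}},\pi)$ fails, because the right-hand side consists of $N$-maps into the \emph{smooth} vectors of $\pi$, whereas $G$-maps out of the induced space correspond to \emph{distribution} vectors. Concretely, take $G=SL_2(\R)$, $\gamma$ principal (so $\g_{-1}=0$, $N=U$, and $\S_{\chi_{\check{\gamma}}}=\C_{\chi_{\check{\gamma}}}$ is a nontrivial unitary character) and $\pi$ a discrete series: then $\Hom_N(\C_{\chi_{\check{\gamma}}},\pi|_N)=0$ (a nonzero smooth eigenvector $v$ would make $|\langle\pi(n)v,v\rangle|$ constant on the unbounded set $N$, contradicting decay of matrix coefficients), while $\Hom_G(\Sch(N\backslash G;\chi_{\check{\gamma}}),\pi)\neq 0$ since discrete series are generic. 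Your second step is false for the same reason: transposing $T\in\Hom_N(\check{\pi},\S_{\chi_{\gamma}})$ yields a map $\S_{\chi_{\gamma}}'\to\check{\mathscr{V}}'$, and its restriction to $\S_{\chi_{\check{\gamma}}}$ has no reason to land in the smooth vectors $\mathscr{V}$; reflexivity of the underlying Fr\'echet spaces does not repair this, and in the example above one of the two Hom-spaces is zero and the other is not. In your outline the two errors cancel to give the correct endpoint $\Wh_{\gamma}(\check{\pi})$, but neither intermediate isomorphism holds.

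What actually has to be proved --- and what the paper does --- is that every $G$-map from the induced space to $\mathscr{V}$ is an integral of matrix coefficients against an $(N_{\f},\chi_{\gamma})$-equivariant continuous functional on $\check{\mathscr{V}}$, i.e.\ against a distribution vector. The paper first uses induction by stages (Lemma \ref{lemma:equalityss}) to replace the infinite-dimensional fibre $\S_{\chi_{\check{\gamma}}}$ by the character $\chi_{\check{\gamma}}$ of the larger group $N_{\f}$; then $G$-maps are identified with elements of $(\check{\mathscr{V}}')^{N_{\f},\chi_{\gamma}}$ via the Kolk--Varadarajan description of equivariant distributions \cite{KV96}, with Lemma \ref{lemma:smFrobenius} providing surjectivity of $h\mapsto h_{\chi_{\check{\gamma}}}$ and the method of \cite{SZ11} together with irreducibility guaranteeing that the matrix-coefficient integrals land in $\mathscr{V}$; finally the moderate-growth estimates of Lemma \ref{lemma:invariantSchwartz} convert $(\check{\mathscr{V}}')^{N_{\f},\chi_{\gamma}}$ into $\Wh_{\gamma}(\check{\pi})=\Hom_N(\check{\mathscr{V}},\S_{\chi_{\gamma}})$. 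This passage through $N_{\f}$ (a polarization of $\g_{-1}$) and through distribution vectors is exactly what your outline omits; your appeal to ``matrix coefficients and estimates'' points at the right mechanism, but it proves a different statement from the reciprocity you wrote down. A secondary caveat: your ``Schur-type'' identity $X_{G,\pi}\cong\Hom_G(X,\pi)'\otimes\pi$ also needs justification here, since $\Hom_G(X,\pi)$ may be infinite-dimensional (e.g.\ for the zero orbit), but this is minor compared with the reciprocity issue.
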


\begin{proof} By Lemma \ref{lemma:equalityss}, we have $\Sch(N\backslash G;\S_{\chi_{\check{\gamma}}})\cong \Sch(N_{\f} \backslash G;\chi_{\check{\gamma}})$. Let $\lambda\in \Hom_{G}(\Sch(N_{\f} \backslash G;\chi_{\check{\gamma}}),\mathscr{V})$, and let $(\check{\pi},\check{\mathscr{V}})$ be the representation contragredient to $(\pi,\mathscr{V})$. Then we may think of $\lambda$ as an element in a space of $\check{\mathscr{V}}'$-valued distributions. Hence, according to \cite[Theorem 3.11]{KV96} there exists $\mu_{\lambda}\in (\check{\mathscr{V}}')^{N_{\f},\chi_{\gamma}}$ such that if $f\in C_{c}^{\infty}(N_{\f} \backslash G;\chi_{\gamma})$, then
\[
\lambda(f)(v)=\int_{N_{\f} \backslash G} f(g)\mu_{\lambda}(\pi(g)v)\, dg, \qquad \mbox{for} \ v \in \check{\mathscr{V}}.
\]
On the other hand, given $\mu \in (\check{\mathscr{V}}')^{N_{\f},\chi_{\gamma}}$ and $f\in \Sch(N_{\f} \backslash G;\chi_{\check{\gamma}})$, there exists $h\in \Sch(G)$ such that $f=h_{\chi_{\check{\gamma}}}$ and for all $v\in \check{\mathscr{V}}$
\begin{eqnarray*}
\int_{G} h(g)\mu_{\lambda}(\pi(g)v)\, dg & = & \int_{N_{\f} \backslash G}\int_{N_{\f}}h(\tilde{n}g)\mu_{\lambda}(\pi(\tilde{n})\pi(g)v)\, d\tilde{n} \,dg\\
& = & \int_{N_{\f} \backslash G}\int_{N_{\f}}h(\tilde{n}g) \chi_{\gamma}(\tilde{n})\, d\tilde{n}\,\, \mu_{\lambda}(\pi(g)v) \,dg\\
& = & \int_{N_{\f} \backslash G}\int_{N_{\f}}h(\tilde{n}g) \chi_{\check{\gamma}}(\tilde{n})^{-1}\, d\tilde{n}\,\, \mu_{\lambda}(\pi(g)v) \,dg\\
& = & \int_{N_{\f} \backslash G} h_{\chi_{\check{\gamma}}}(g)  \mu_{\lambda}(\pi(g)v) \,dg\\
& = & \int_{N_{\f} \backslash G} f(g)  \mu_{\lambda}(\pi(g)v) \,dg.
\end{eqnarray*}
Therefore, if we set
\[
\lambda_{\mu}(f)(v)=\int_{N_{\f} \backslash G} f(g)  \mu_{\lambda}(\pi(g)v) \,dg, \qquad \mbox{for} \, v \in \check{\mathscr{V}},
\]
then $\lambda_{\mu}\in \Hom_{G}(\Sch(N_{\f} \backslash
G;\chi_{\check{\gamma}}),\check{\mathscr{V}}')$. Method of \cite[Section 3]{SZ11} implies that $\lambda_{\mu}(f)$ extends to a continuous linear
functional on $\mathscr{V}'$, and since $V$ is irreducible, $\lambda_{\mu}(f)$ is actually
an element of $\mathscr{V}$ for all $f\in \Sch(N_{\f} \backslash
G;\chi_{\check{\gamma}})$, or equivalently $\lambda_{\mu}\in
\Hom_{G}(\Sch(N_{\f} \backslash G;\chi_{\check{\gamma}}),\mathscr{V})$. Now it
is easy to check that $\lambda_{\mu_{\lambda}}=\lambda$ and
$\mu_{\lambda_{\mu}}=\mu$. Therefore
\[
\Sch(N_{\f} \backslash G;\chi_{\check{\gamma}})_{G,\pi} \cong \pi\otimes \check{\pi}_{N_{\f},\chi_{\gamma}}.
\]
Finally, according to Lemma \ref{lemma:invariantSchwartz} and by taking the continuous dual, we have $\check{\pi}_{N_{\f},\chi_{\gamma}}\cong W_{\gamma}(\check{\pi})$. Therefore we obtain the required isomorphism, which is easily checked to be $M_{\chi_{\gamma}}$-equivariant. \end{proof}

\begin{remarks} (a) In the Archimedian setting whenever we take the tensor product of two locally convex topological vector spaces, we always mean the completed projective tensor product. Observe that if $(\pi,\V)$ is a Casselman-Wallach representation, then $\V$ is nuclear, and hence the completed projective tensor product of $\V$ with any locally convex topological vector space is equivalent to the completed injective tensor product \cite{G55}.

\noindent (b) Proposition \ref{prop:SchwartzWhittakerModels} also
holds for $\k$ non-Archimedian, if we replace in its statement
Casselman-Wallach representations by smooth, finitely generated,
admissible representations, and completed projective tensor product
by algebraic tensor product. The proof for this case follows the
same line, but it is more straightforward.
\end{remarks}

\section{Dual pairs and theta lifting of nilpotent orbits}
\label{sec:liftOrbit}

\subsection{Reductive dual pairs of type I} \label{reductivepairs}
Let $(V,B)$ be an $\varepsilon$-Hermitian module, and let
$(\tilde{V},\tilde{B})$ be  an $\tilde{\varepsilon}$-Hermitian
module such that $\varepsilon\tilde{\varepsilon}=-1$. Let
$G=G(V)$ and $\tilde{G}=G(\tilde{V})$ be its associated isometry
groups. We will use the $\varepsilon$-Hermitian form $B$ to
identify the $\k$-vector spaces $\tilde{V}\otimes_{D}V^{\ast}$ and
$\Hom_{D}(V,\tilde{V})$. Given $T\in \Hom_{D}(V,\tilde{V})$, define
$T^{\ast}\in \Hom_{D}(\tilde{V},V)$ by
\[
\tilde{B}(Tv,\tilde{v})=B(v,T^{\ast}\tilde{v}) \qquad
\mbox{for all $v\in V$, $\tilde{v}\in \tilde{V}$.}
\]
Define similarly the $*$ map from $\Hom_{D}(\tilde{V},V)$ to $\Hom_{D}(V,\tilde{V})$. Note that since $\varepsilon\tilde{\varepsilon}=-1$, we have $T^{\ast \ast}=-T$ for $T\in \Hom_{D}(V,\tilde{V})$.

We now define a
symplectic form $\langle\cdot , \cdot  \rangle$ on $\Hom_{D}(V,\tilde{V})$ by setting
\begin{equation}
\label{defsympro}
\langle T,S\rangle =\Tr(T^{\ast}S) \qquad \mbox{for all $T$, $S\in \Hom_{D}(V,\tilde{V})$,}
\end{equation}
where $\Tr(T^{\ast}S)$ is the trace of $T^{\ast}S$ as a $\k$-linear transformation.
Let
\[
 \Sp(\Hom_{D}(V,\tilde{V}))=\left\{g\in \GL(\Hom_{D}(V,\tilde{V}),\k) \, \left| \, \begin{array}{c} \mbox{$\langle g \cdot T, g\cdot S \rangle= \langle T, S \rangle$}\\ \mbox{for all $T$, $S \in \Hom_{D}(V,\tilde{V})$} \end{array} \right.\right\}.
\]
 Then there is a natural map $G \times \tilde{G}\longrightarrow \Sp(\Hom_{D}(V,\tilde{V}))$ given by
\[
 (g,\tilde{g})\cdot T = \tilde{g} T g^{-1} \qquad \mbox{for all $T \in \Hom(V,\tilde{V})$, $g\in G$, $\tilde{g}\in \tilde{G}$}.
\]
We will use this map to identify $G$ and $\tilde{G}$ with subgroups
of $\Sp(\Hom_{D}(V,\tilde{V}))$. These two subgroups are mutual
centralizers of each other, and form an example of a {\em reductive
dual pair} of type I. See \cite{Ho79}.

\subsection{Moment maps and lifting of nilpotent orbits}
\label{subsec:liftOrbit}

Given $T\in \Hom_{D}(V,\tilde{V})$, it is clear that $T^{\ast}T\in \g$ and $TT^{\ast}\in \tilde{\g}$. Following \cite{KP82, DKP} we define the \emph{moment maps} to be
\begin{eqnarray*}
\varphi:\Hom_{D}(V,\tilde{V}) & \longrightarrow & \g\\
               T & \mapsto  &  T^{\ast}T
\end{eqnarray*}
and
\begin{eqnarray*}
\tilde{\varphi}:\Hom_{D}(V,\tilde{V}) & \longrightarrow & \tilde{\g}\\
               T & \mapsto  &  TT^{\ast}.
\end{eqnarray*}
It is also clear that $\varphi (T)$ is nilpotent if and only if $\tilde{\varphi}(T)$ is nilpotent.

As in the introduction, let $\Max \Hom (V,\tilde{V})$ be the set of
full rank elements in $\Hom(V,\tilde{V})$. Without any loss of
generality, we assume that $\dim V \leq \dim \tilde{V}$, and
elements of $\Max \Hom (V,\tilde{V})$ are then represented by
injective maps from $V$ to $\tilde{V}$. Recall also our standing
assumption on the nilpotent orbit $\Orb \subset \g$:
\begin{equation}
\label{assumption}
\varphi ^{-1}(\Orb) \cap \Max \Hom (V,\tilde{V})\ne \emptyset.
\end{equation}

We first prove a result on dual pairs in the stable range, which says that in this case any (not just nilpotent) orbit satisfies the stated assumption.
Recall that the dual pair $(G,\tilde{G})$ is in the stable range, with $G$ the smaller member, if there is
a polarization $\tilde{V}=\tilde{E}\oplus \tilde{U}\oplus \tilde{F}$, where $\tilde{E}$, $\tilde{F}$ are
totally isotropic complementary subspaces with $\dim \tilde{E}=\dim \tilde{F}=\dim V$.

\begin{lemma}\label{lemma:existance} Assume that the dual pair $(G,\tilde{G})$ is in the stable range with $G$ the smaller member. Given $X\in \g$, there exists an injective map $T\in \Hom(V,\tilde{V})$ such that $T^{\ast}T=X$.
\end{lemma}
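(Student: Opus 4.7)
The plan is to write $T$ down explicitly using the stable-range polarization. Since $\dim V = \dim \tilde{E} = \dim \tilde{F}$, I would first fix a $D$-linear isomorphism $\iota \colon V \to \tilde{E}$. The non-degenerate pairing $\tilde{B}\colon \tilde{E} \times \tilde{F} \to D$, together with $\iota$ and the non-degeneracy of $B$ on $V$, yields a $\k$-linear bijection $\Hom_D(V,\tilde{F}) \to \End_D(V)$, $S \mapsto A_S$, where $A_S$ is characterized by
\[
\tilde{B}(\iota v, S w) = B(v, A_S w), \qquad v, w \in V.
\]

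Next I would search for $T$ of the form $T = \iota \oplus 0 \oplus S$ (with vanishing $\tilde{U}$-component). Because $\tilde{E}$ and $\tilde{F}$ are totally isotropic, only the cross terms survive:
\[
\tilde{B}(Tv, Tw) = \tilde{B}(\iota v, Sw) + \tilde{B}(Sv, \iota w).
\]
Using the relations $B(v,w)=\varepsilon\overline{B(w,v)}$, $\tilde{B}(\tilde{v},\tilde{w})=\tilde{\varepsilon}\overline{\tilde{B}(\tilde{w},\tilde{v})}$, $\varepsilon\tilde{\varepsilon}=-1$, and $\overline{\varepsilon}=\varepsilon$, the second summand can be rewritten as $-B(v, A_S^{\ast} w)$, and therefore
\[
T^{\ast}T \;=\; A_S - A_S^{\ast}.
\]

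To finish, given $X \in \g$ (so $X^{\ast}=-X$), I would pick $S$ such that $A_S = X/2$; this is possible because $\mathrm{char}\,\k = 0$. Then $T^{\ast}T = X/2 - (X/2)^{\ast} = X/2 + X/2 = X$. Injectivity of $T = \iota \oplus 0 \oplus S$ is automatic: $Tv = 0$ forces the $\tilde{E}$-component $\iota v$ to vanish, and $\iota$ is an isomorphism. The only real obstacle is keeping the sesquilinear and adjoint conventions straight; once those are tracked correctly, the lift of $X$ is completely explicit.
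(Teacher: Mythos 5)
Your construction is correct and coincides with the paper's: the paper also fixes an isomorphism $T_{\tilde{E}}:V\to\tilde{E}$ and takes the $\tilde{F}$-component to be $(T_{\tilde{E}}^{\ast})^{-1}X/2$, which is exactly your $S$ with $A_S=X/2$, the general identity $T^{\ast}T=A_S-A_S^{\ast}$ being verified there directly via $T^{\ast\ast}=-T$ and $X^{\ast}=-X$. So the proposal is essentially the paper's proof, merely organized through the parametrization $S\mapsto A_S$.
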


\begin{proof}
Fix a linear isomorphism $T_{\tilde{E}}:V\rightarrow \tilde{E}$ and define a linear map $T_{\tilde{F}}:V\rightarrow \tilde{F}$ by $T_{\tilde{F}} =(T_{\tilde{E}}^{\ast})^{-1} X/2$. Then, if we set $T=T_{\tilde{E}}+T_{\tilde{F}}$, we have that
\[
T^{\ast}T=(T_{\tilde{E}}^{\ast}+T_{\tilde{F}}^{\ast})(T_{\tilde{E}}+T_{\tilde{F}})=T_{\tilde{E}}^{\ast}T_{\tilde{F}}+T_{\tilde{F}}^{\ast}T_{\tilde{E}}=X/2+X/2=X.
\]
Note that we have used the fact that $T_{\tilde{F}}^{\ast}=X^{\ast}(T_{\tilde{E}}^{\ast \ast})^{-1}/2=XT_{\tilde{E}}^{-1}/2$. Now, since $T_{\tilde{E}}$ is injective, we conclude that $T$ is injective and $T^{\ast}T=X$ as we wanted to show.
\end{proof}

\begin{remark} The correspondence of nilpotent orbits for dual pairs in the stable range is well understood. See \cite{DKP,Pan}, and \cite{NOZ} (for $K_{\C}$-nilpotent orbits).
This is also covered by the next proposition, in view of Lemma \ref{lemma:existance}.
\end{remark}

\begin{proposition}\label{prop:Gammatilde}
Let $\Orb\subset \g$ be a nilpotent orbit corresponding to
the admissible $\varepsilon$-Hermitian Young tableau $\Gamma=(d^{\Gamma},(V^{\Gamma},B^{\Gamma}))$, where $d^{\Gamma}=[t_{1}^{i_{1}},\ldots,t_{l}^{i_{l}}]$.
Given $X\in
\Orb$, let $T\in \Hom(V,\tilde{V})$ be an injective map such
that $T^{\ast}T=X$.  Then the orbit of the nilpotent element $\tilde{X}:=TT^{\ast}\in \tilde{\g}$ corresponds to the unique
equivalence class of admissible $\tilde{\varepsilon}$-Hermitian Young tableaux $\tilde{\Gamma}=(d^{\tilde{\Gamma}},(V^{\tilde{\Gamma}},B^{\tilde{\Gamma}}))$, where
\begin{itemize}
\item $d^{\tilde{\Gamma}}=[(t_{1}+1)^{i_{1}},\ldots,(t_{l}+1)^{i_{l}},1^{s}]$, with $s=\dim \tilde{V}-\dim V-\sum_{j=1}^{l} i_{j}$, namely $\tilde{\Gamma}$ is obtained by adding a column of length $\dim \tilde{V}-\dim V$ to $\Gamma$;
\item $(V^{\tilde{\Gamma},t_{j}+1}_{t_{j}},B^{\tilde{\Gamma},t_{j}+1}_{t_{j}})\cong
(V^{\Gamma,t_{j}}_{t_{j}-1},B^{\Gamma,t_{j}}_{t_{j}-1})$, for all $j=1,\ldots,l$;
\item $(V^{\tilde{\Gamma},1}_{0},B^{\tilde{\Gamma},1}_{0})$ is an $\tilde{\varepsilon}$-Hermitian module of dimension $s$.
\end{itemize}
\end{proposition}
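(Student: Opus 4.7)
The plan is to construct an $\sl_2$-triple $\tilde{\gamma} = \{\tilde{X}, \tilde{H}, \tilde{Y}\}$ in $\tilde{\g}$ with $\tilde{X} = TT^*$, chosen so that $T$ lifts $\gamma$ to $\tilde{\gamma}$ in the sense of the definition preceding Theorem \ref{MainThm} (namely $T(V_i) \subset \tilde{V}_{i+1}$ for all $i$), and to read off both the partition $\mathbf{d}^{\tilde{\Gamma}}$ and the Hermitian modules from the resulting $\sl_2$-structure on $\tilde{V}$.

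The partition is determined purely by a dimension count. The identity
\[
 \tilde{X}^{k} = (TT^*)^{k} = T(T^*T)^{k-1} T^* = T X^{k-1} T^* \qquad (k \geq 1),
\]
combined with the injectivity of $T$ (so $T^*$ is surjective with $\dim \ker T^* = \dim \tilde{V} - \dim V$), gives
\[
 \dim \ker \tilde{X}^{k} = (\dim \tilde{V} - \dim V) + \dim \ker X^{k-1}, \qquad k \geq 1.
\]
Comparing this with the formula $\dim \ker X^{k} = \sum_j \min(t_j, k)\, i_j$ coming from $\mathbf{d}^{\Gamma}$ forces the Jordan type of $\tilde{X}$ to be $[(t_1+1)^{i_1}, \ldots, (t_l+1)^{i_l}, 1^{s}]$ with $s = \dim \tilde{V} - \dim V - \sum_j i_j$, which is the claimed $\mathbf{d}^{\tilde{\Gamma}}$.

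Next, I would fix $\gamma = \{X, H, Y\}$ in $\g$ and require $\tilde{H}$ to act on $T(V_i)$ as $(i+1)\cdot\mathrm{id}$. Then for each $j$, the image $T(V^{\gamma, t_j})$ occupies the weights $t_j, t_j - 2, \ldots, -t_j + 2$ inside a prospective $(t_j+1)$-dimensional $\sl_2$-submodule $\tilde{V}^{\tilde{\gamma}, t_j+1} \subset \tilde{V}$. Each chain is completed by choosing an $i_j$-dimensional subspace of $\tilde{V}_{-t_j}$ to realize $\tilde{Y}^{t_j} T(V^{\gamma, t_j}_{t_j-1})$; together with the $\sl_2$-relations and $\tilde{X} = TT^*$, this determines $\tilde{H}$ and $\tilde{Y}$ on $\bigoplus_j \tilde{V}^{\tilde{\gamma}, t_j+1}$, and the $\tilde{B}$-orthogonal complement of dimension $s$ becomes the trivial piece $\tilde{V}^{\tilde{\gamma}, 1}$. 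The identification of forms is then direct: the $\sl_2$-representation-theoretic identity $\tilde{X}^{t_j}\tilde{Y}^{t_j}|_{T(V^{\gamma, t_j}_{t_j-1})} = (t_j!)^2 \cdot \mathrm{id}$ on highest-weight vectors, together with $\tilde{Y}^* = -\tilde{Y}$ and the intertwining $\tilde{X} T = TX$, yields
\[
 B^{\tilde{\Gamma}, t_j+1}_{t_j}(Tv, Tw) = \tilde{B}(\tilde{X}^{-t_j} Tv, Tw) = \tfrac{1}{(t_j!)^2}\, \tilde{B}(\tilde{Y}^{t_j} Tv, Tw),
\]
which unfolds to an invertible rescaling of $B^{\Gamma, t_j}_{t_j-1}(v, w)$ and hence gives the claimed $(V^{\tilde{\Gamma}, t_j+1}_{t_j}, B^{\tilde{\Gamma}, t_j+1}_{t_j}) \cong (V^{\Gamma, t_j}_{t_j-1}, B^{\Gamma, t_j}_{t_j-1})$.

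The principal technical obstacle is simultaneously arranging (a) $\tilde{Y}^* = -\tilde{Y}$ so that $\tilde{\gamma} \subset \tilde{\g}$, (b) mutual $\tilde{B}$-orthogonality of the distinct isotypic pieces $\tilde{V}^{\tilde{\gamma}, t_j+1}$, and (c) the sign/symmetry bookkeeping once one remembers $\tilde{\epsilon} = -\epsilon$ and that $B^{\tilde{\Gamma}, t_j+1}$ is built by tensoring $B^{\tilde{\Gamma}, t_j+1}_{t_j}$ with the $(-1)^{t_j}$-Hermitian factor $(\cdot,\cdot)_{t_j+1}$ as in (\ref{Bgt}). The lowest-weight completing subspaces $\tilde{V}_{-t_j}^{\tilde{\gamma}, t_j+1}$ are pinned down by the non-degenerate pairing of $\tilde{B}$ between $\tilde{V}_{-t_j}$ and $\tilde{V}_{t_j}$, and the symmetry type of the complementary piece follows by a Witt-type argument. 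Once the construction is carried out, the admissible $\tilde{\epsilon}$-Hermitian Young tableau $\tilde{\Gamma}$ produced coincides with the tableau of the orbit of $\tilde{X}$, and its equivalence class is independent of all choices by the bijection between nilpotent orbits and equivalence classes of admissible Young tableaux.
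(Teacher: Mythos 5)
Your dimension count for the partition is correct, and it is in fact a cleaner route to the first bullet than the one in the paper: from $\tilde{X}^{k}=TX^{k-1}T^{\ast}$ and the injectivity of $T$ (hence surjectivity of $T^{\ast}$) one gets $\dim\ker\tilde{X}^{k}=(\dim\tilde{V}-\dim V)+\dim\ker X^{k-1}$, which forces the Jordan type $[(t_{1}+1)^{i_{1}},\ldots,(t_{l}+1)^{i_{l}},1^{s}]$; the paper obtains the partition only as a by-product of an explicit construction. But the partition is the easy part of the statement: an orbit here is classified by the partition \emph{together with} the Hermitian modules on the highest-weight spaces, and it is on the second and third bullets that your sketch has genuine gaps.

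First, the construction of the graded, orthogonal decomposition of $\tilde{V}$ is announced but not carried out: the items you yourself list as ``the principal technical obstacle'' (orthogonality of the distinct isotypic images, existence of the lowest-weight completions, $\tilde{Y}^{\ast}=-\tilde{Y}$) are exactly the content of the proof, and your prescription ``choose an $i_{j}$-dimensional subspace of $\tilde{V}_{-t_{j}}$'' is circular, since $\tilde{V}_{-t_{j}}$ only has meaning after $\tilde{H}$ (equivalently the decomposition) exists. The paper does this work concretely: from $\tilde{B}(Tv,Tw)=B(v,Xw)$ it deduces that the images $T(V^{\gamma,t_{i}})$ are mutually orthogonal and that the relevant pairs pair perfectly, then inductively chooses subspaces $\tilde{V}^{\tilde{\gamma},t_{j}+1}_{-t_{j}}$ making each $T(V^{\gamma,t_{j}})\oplus\tilde{V}^{\tilde{\gamma},t_{j}+1}_{-t_{j}}$ nondegenerate and orthogonal to the rest; only then is $\tilde{H}$ defined as the grading operator and $\tilde{Y}$ produced from $[\tilde{H},\tilde{X}]=2\tilde{X}$. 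Second, and more seriously, your final step ``unfolds to an invertible rescaling of $B^{\Gamma,t_{j}}_{t_{j}-1}(v,w)$ and hence gives the claimed isomorphism'' is a non sequitur: rescaling an $\epsilon$-Hermitian form by a nonzero scalar changes its isomorphism class in general (a sign flips the signature over $\R$; a non-square alters the discriminant/Hasse data over a $p$-adic field), so the second bullet requires pinning down the scalar exactly. Your proposed route cannot do this: the identity $\tilde{X}^{-t_{j}}=(t_{j}!)^{-2}\tilde{Y}^{t_{j}}$ on highest-weight vectors is true, but $\tilde{Y}$ was obtained abstractly and bears no a priori relation to $T$ or to $Y$, so $\tilde{B}(\tilde{Y}^{t_{j}}Tv,Tw)$ does not ``unfold'' into an expression in $B(v,w)$. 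The workable computation, which is the paper's equation (\ref{eq:isometry}), bypasses $\tilde{Y}$ entirely: one writes $\tilde{X}^{t_{j}}=TX^{t_{j}-1}T^{\ast}$ and uses the adjunction $\tilde{B}(Tv,\tilde{w})=B(v,T^{\ast}\tilde{w})$ to express $\tilde{B}(\tilde{X}^{-t_{j}}Tv,Tw)$ directly in terms of $B(X^{-(t_{j}-1)}v,w)=B^{\gamma,t_{j}}_{t_{j}-1}(v,w)$, tracking carefully the signs coming from $\epsilon\tilde{\epsilon}=-1$ and $T^{\ast\ast}=-T$ — precisely the bookkeeping that an ``invertible rescaling'' argument sweeps under the rug. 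Until these two points are supplied, your argument establishes only the partition statement, not the correspondence of Hermitian Young tableaux.
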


\begin{proof}
Let $\gamma=\{X,H,Y\}$ be an $\sl_{2}$-triple containing $X$. Define $V_{i}^{\gamma,t_{j}}$ as in Section \ref{subsection:nilpotentorbits}, and set
\begin{equation*}
%\label{lift-grading}
\tilde{V}_{i+1}^{\tilde{\gamma},t_{j}+1}=T(V_{i}^{\gamma,t_{j}}).
\end{equation*}
(At this point, $\tilde{\gamma}$ has no meaning.) Since $T$ is injective, we have a direct sum decomposition:
\[
T(V)=\bigoplus _{j=1}^l T(V^{\gamma,t_{j}}),
\]
and for each $j$, we have
$$
T(V^{\gamma,t_{j}})=\bigoplus_{i=1}^{t_{j}}T(V^{\gamma,t_{j}}_{-t_{j}+2i-1})=\bigoplus_{i=1}^{t_{j}}\tilde{V}^{\tilde{\gamma},t_{j}+1}_{-t_{j}+2i}.
$$

Observe that for all $v$, $w\in V$,
\[
\tilde{B}(Tv,Tw)=B(v,T^{\ast}Tw)=B(v,Xw).
\]
This implies that $T(V^{\gamma,t_{i}})\bot T(V^{\gamma,t_{j}})$ for $i\ne j$. We also note the following: if $\mathcal{A},\mathcal{B}\subset V$ and $\mathcal{A}, X(\mathcal{B})$ form a perfect pairing under $B$, then $T(\mathcal{A}), T(\mathcal{B})$ form a perfect pairing under $\tilde{B}$.
This implies that, for a fixed $j$, $\bigoplus_{i=1}^{t_{j}-1}\tilde{V}^{\tilde{\gamma},t_{j}+1}_{-t_{j}+2i}$ is non-degenerate, and $\tilde{V}^{\tilde{\gamma},t_{j}+1}_{t_{j}}$ is orthogonal to $\bigoplus_{i=1}^{t_{j}-1}\tilde{V}^{\tilde{\gamma},t_{j}+1}_{-t_{j}+2i}$. Therefore there exists $\tilde{V}^{\tilde{\gamma},t_{j}+1}_{-t_{j}}$ such that
\begin{itemize}
\item $T(V^{\gamma,t_{j}})\oplus \tilde{V}^{\tilde{\gamma},t_{j}+1}_{-t_{j}}= \bigoplus_{i=0}^{t_{j}}\tilde{V}^{\tilde{\gamma},t_{j}+1}_{-t_{j}+2i}$ is non-degenerate; and
\item $\tilde{V}^{\tilde{\gamma},t_{j}+1}_{-t_{j}}\, \bot \, T(V^{\gamma,t_{i}})$, for all $i\ne j$.
\end{itemize}

By an inductive argument, we may thus find $\tilde{V}^{\tilde{\gamma},t_{j}+1}_{-t_{j}}$ for $1\leq j\leq l$ such that
\begin{itemize}
\item $\tilde{U}_j=: \oplus_{i=0}^{t_{j}}\tilde{V}^{\tilde{\gamma},t_{j}+1}_{-t_{j}+2i}$ is non-degenerate; and
\item $\oplus _{j=1}^l\tilde{U}_j$ is an orthogonal direct sum.
\end{itemize}

Let
$\tilde{V}^{\tilde{\gamma},1}_{0}$ be the orthogonal complement of $\oplus _{j=1}^l\tilde{U}_j$. Then, we have a decomposition
\[
\tilde{V}=\boxplus_{j=1}^{l}\bigg(\oplus_{i=0}^{t_{j}} \tilde{V}^{\tilde{\gamma},t_{j}+1}_{-t_{j}+2i}\bigg)
\boxplus \tilde{V}^{\tilde{\gamma},1}_{0},
\]
where $\boxplus$ denotes an orthogonal direct sum.

Let $\tilde{V}_{i}=\oplus_{j}\tilde{V}^{\tilde{\gamma},t_{j}+1}_{i}$, and
define an element $\tilde{H}\in \tilde{\g}$ (easily checked) by setting $\tilde{H} \tilde{v}= i\,\tilde{v}$
for all $\tilde{v}\in \tilde{V}_{i}$. Note that for $i<t_{j}-1$, $T^{\ast}|_{\tilde{V}^{\tilde{\gamma},t_{j}+1}_{i}}:\tilde{V}^{\tilde{\gamma},t_{j}+1}_{i}\rightarrow V^{\gamma,t_{j}}_{i+1}$ is an isomorphism and $TT^{\ast}=\tilde{X}:\tilde{V}^{\tilde{\gamma},t_{j}+1}_{i}\rightarrow
\tilde{V}^{\tilde{\gamma},t_{j}+1}_{i+2}$. In particular, $\tilde{X}$ is nilpotent.
Since $[\tilde{X},\tilde{H}]=2\tilde{X}$ and $\tilde{H}$ is semisimple, there exists an element $\tilde{Y}\in \tilde{\g}$ such that
$\{\tilde{X},\tilde{H}, \tilde{Y}\}$ is an $\sl_{2}$-triple. See \cite[Section 3.3]{CM92}. From
all this it is clear that if $\tilde{\Gamma}=(d^{\tilde{\Gamma}},(V^{\tilde{\Gamma}},B^{\tilde{\Gamma}}))$ is
the $\tilde{\varepsilon}$-Hermitian Young tableau associated to the orbit
of $\tilde{X}$, then $d^{\tilde{\Gamma}}$ is obtained by adding a column to
$d^{\Gamma}$. Finally observe that $\tilde{X}^{t_{j}}=TX^{t_{j}-1}T^{\ast}$, and so
\[\tilde{X}^{-t_{j}}|_{\tilde{V}^{\tilde{\gamma},t_{j}+1}_{t_{j}}}=(T^{-1})^*X^{-(t_{j}-1)}T^{-1}|_{\tilde{V}^{\tilde{\gamma},t_{j}+1}_{t_{j}}}.\]
If $\tilde{v}$, $\tilde{w}\in
\tilde{V}^{\tilde{\gamma},t_{j}+1}_{t_{j}}$, then
\begin{equation}\label{eq:isometry}
\begin{aligned}
\tilde{B}^{\tilde{\gamma},t_{j}+1}_{t_{j}}(\tilde{v},\tilde{w})&=\tilde{B}(\tilde{X}^{-t_{j}}\tilde{v},\tilde{w})\\
                                                               &=\tilde{B}((T^{-1})^*X^{-(t_{j}-1)}T^{-1}\tilde{v}, \tilde{w})\\
                                                               &=B(X^{-(t_{j}-1)}T^{-1}\tilde{v}, T^{-1}\tilde{w})\\
                                                               &=B_{t_{j}+1}^{\gamma,t_{j}}(T^{-1}\tilde{v},T^{-1}\tilde{w}),
\end{aligned}
\end{equation}
which implies that $(V^{\tilde{\Gamma},t_{j}+1}_{t_{j}},B^{\tilde{\Gamma},t_{j}+1}_{t_{j}})\cong
(V^{\Gamma,t_{j}}_{t_{j}-1},B^{\Gamma,t_{j}}_{t_{j}-1})$, for all $j=1,\ldots,l$.
\end{proof}

Let $\Orb\subset\g$ be a nilpotent orbit (satisfying \eqref{assumption}), which corresponds to an $\varepsilon$-Hermitian Young tableau $\Gamma=(d^{\Gamma},(V^{\Gamma},B^{\Gamma}))$. We define the \emph{theta lift} of $\Orb$, $\Theta(\Orb)\subset\tilde{\g}$, to be the nilpotent orbit corresponding to the $\tilde{\varepsilon}$-Hermitian Young tableau $\tilde{\Gamma}=(d^{\tilde{\Gamma}},(V^{\tilde{\Gamma}},B^{\tilde{\Gamma}}))$ specified in Proposition \ref{prop:Gammatilde}.

\begin{remark} It can be shown that, for the orbit $\Orb$ under consideration, $\tilde{\varphi}(\varphi^{-1}(\overline{\Orb}))=\overline{\Theta(\Orb)}$. Cf. \cite[Lemma 4.3]{KP82}. Therefore, our definition of the theta lift of $\Orb$ agrees with the usual one.
\end{remark}

\subsection{Lifting of $\sl_{2}$-triples}
\label{liftTriple} In this section we will introduce the concept of
lifting of $\sl_{2}$-triples. We start with the following

\begin{definition}
Let $\gamma=\{X,H,Y\}\subset \g$ and $\tilde{\gamma}=\{\tilde{X},\tilde{H},\tilde{Y}\}\subset \tilde{\g}$ be two $\sl_{2}$-triples of type $\Orb$ and $\tilde{\Orb}$, respectively. We say that $T\in \Hom(V,\tilde{V})$ \emph{lifts} $\gamma$ to $\tilde{\gamma}$ if $T^{\ast}T=X$, $TT^{\ast}=\tilde{X}$ and $T(V_{j})\subset \tilde{V}_{j+1}$ for all $j$.
\end{definition}

Set
\begin{equation}
\label{defogg}
\Orb_{\gamma,\tilde{\gamma}}=\{T\in \Hom(V,\tilde{V})\, | \, \mbox{$T$ lifts $\gamma$ to $\tilde{\gamma}$}\}.
\end{equation}

We prove the following elementary

\begin{lemma}
\label{lemma:injsur}
 Let $\gamma=\{X,H,Y\}\subset \g$ and $\tilde{\gamma}=\{\tilde{X},\tilde{H},\tilde{Y}\}\subset \tilde{\g}$ be two $\sl_{2}$-triples. Let $T\in \Orb_{\gamma,\tilde{\gamma}}$ and denote \[
T_j=T|_{V_{j}}\in \Hom(V_j,\tilde{V}_{j+1}).\]
Then $T_j$ is injective for $j<0$, and surjective for $j\geq 0$.
\end{lemma}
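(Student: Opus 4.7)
The plan is to exploit the two identities $T^{\ast}T = X$ and $TT^{\ast} = \tilde{X}$ together with the standard weight-space behavior of $\sl_{2}$-triples on $V$ and $\tilde{V}$. A key preliminary I would establish first is the ``adjoint shift'' $T^{\ast}(\tilde{V}_i)\subset V_{i+1}$ for all $i$. Since $H\in \g$ and $\tilde{H}\in \tilde{\g}$ are anti-self-adjoint, applying the $\ast$-operation to the commutator identity $\tilde{H}T - TH = T$ (which is just the infinitesimal form of the assumption $T(V_j)\subset \tilde{V}_{j+1}$) produces $HT^{\ast} - T^{\ast}\tilde{H} = T^{\ast}$, and reading this relation on $\tilde{H}$-weight spaces yields the claim.

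For the injectivity statement ($j<0$), I would invoke the standard $\sl_{2}$-fact that $\ker(X)$ is spanned by highest-weight vectors of the irreducible $\sl_{2}$-summands of $V$, each of which has non-negative $H$-weight $t_k-1\geq 0$. Thus $X:V_j\to V_{j+2}$ is injective whenever $j<0$. If $v\in V_j$ lies in $\ker T_j$, then $Xv = T^{\ast}Tv = 0$, forcing $v=0$.

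For the surjectivity statement ($j\geq 0$), I would invoke the dual fact that $\tilde{X}:\tilde{V}_{j-1}\to \tilde{V}_{j+1}$ is surjective whenever $j-1\geq -1$, since the only obstruction to surjectivity of $\tilde{X}$ on a weight space comes from the images of lowest-weight vectors (which have weight $\leq 0$). Then
\[
\tilde{V}_{j+1} \;=\; \tilde{X}(\tilde{V}_{j-1}) \;=\; TT^{\ast}(\tilde{V}_{j-1}) \;\subset\; T_j(V_j),
\]
where the final inclusion uses $T^{\ast}(\tilde{V}_{j-1})\subset V_j$ from the preliminary.

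The argument is quite direct; I anticipate the only piece requiring a moment of thought to be the preliminary adjoint shift, since without controlling how $T^{\ast}$ interacts with the $\tilde{H}$-grading, the surjectivity step does not close up cleanly. The boundary cases $j=-1$ (injectivity) and $j=0$ (surjectivity) are not special and fall within the ranges above.
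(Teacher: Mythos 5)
Your proof is correct and follows essentially the same route as the paper: both arguments rest on the factorizations $X|_{V_j}=T^{\ast}T|_{V_j}$ and $\tilde{X}=TT^{\ast}$ restricted to weight spaces, the shift $T^{\ast}(\tilde{V}_i)\subset V_{i+1}$, and the standard $\sl_2$ fact about highest (resp.\ lowest) weight vectors having non-negative (resp.\ non-positive) weights. The only cosmetic difference is in the surjectivity step, where the paper shows $T_j^{\ast}$ is injective (via injectivity of $\tilde{X}$ on $\tilde{V}_{-j-1}$) and then dualizes, whereas you use surjectivity of $\tilde{X}:\tilde{V}_{j-1}\to\tilde{V}_{j+1}$ directly --- a dual phrasing of the same computation.
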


\begin{proof} We have $X|_{V_j}=T^*T|_{V_j}=T^*_{-j-2}T_j$. Since all highest weight vectors of a finite dimensional $\sl_{2}$ representation have non-negative weights, we know $X|_{V_j}$ is injective for $j<0$. This implies the first assertion.

For the second assertion, we first note that $T^*_j: \tilde{V}_{-j-1}\mapsto V_{-j}$. We have $\tilde{X}|_{\tilde{V}_{-j-1}}=TT^*|_{\tilde{V}_{-j-1}}=T_{-j}T^*_j$. By the same reasoning, we know that $T^*_j$ is injective for $j\geq 0$. The second assertion follows.
\end{proof}

From now on, we assume that $\Orb$ satisfies \eqref{assumption}, as
in the introduction. We let
\begin{equation}
\label{defoggMax}
\Orb^{\Max}_{\gamma,\tilde{\gamma}}=\Orb_{\gamma,\tilde{\gamma}}\cap \Max \Hom (V,\tilde{V}).
\end{equation}

\begin{lemma}
\label{lem:oggMax}
Let $\gamma=\{X,H,Y\}\subset \g$ and $\tilde{\gamma}=\{\tilde{X},\tilde{H},\tilde{Y}\}\subset \tilde{\g}$ be two $\sl_{2}$-triples of type $\Orb$ and $\Theta(\Orb)$, respectively. Then $\Orb^{\Max}_{\gamma,\tilde{\gamma}}\neq \emptyset$ and it is a single $M_{X}\times \tilde{M}_{\tilde{X}}$-orbit.
\end{lemma}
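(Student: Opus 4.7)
For non-emptiness I will use Proposition \ref{prop:Gammatilde} together with a conjugation argument. By the standing assumption \eqref{assumption} there exists $T_0\in \Max\Hom(V,\tilde{V})$ with $X_0:=T_0^\ast T_0\in \Orb$. Fixing any $\sl_{2}$-triple $\gamma_0=\{X_0,H_0,Y_0\}$ containing $X_0$, the construction given in the proof of Proposition \ref{prop:Gammatilde} produces an $\sl_{2}$-triple $\tilde{\gamma}_0$ of type $\Theta(\Orb)$ satisfying $T_0(V^{\gamma_0,t_j}_i)\subset \tilde{V}^{\tilde{\gamma}_0,t_j+1}_{i+1}$, so that $T_0\in \Orb^{\Max}_{\gamma_0,\tilde{\gamma}_0}$. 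Since $\gamma$ and $\tilde{\gamma}$ are $\sl_{2}$-triples of the same types as $\gamma_0$ and $\tilde{\gamma}_0$, there exist $g\in G$, $\tilde{g}\in \tilde{G}$ with $\Ad(g)\gamma_0=\gamma$ and $\Ad(\tilde{g})\tilde{\gamma}_0=\tilde{\gamma}$, and then $\tilde{g}T_0 g^{-1}\in \Orb^{\Max}_{\gamma,\tilde{\gamma}}$.

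The main step for transitivity will be the observation that the image $T(V)\subset \tilde{V}$ is the \emph{same} subspace for every $T\in \Orb^{\Max}_{\gamma,\tilde{\gamma}}$. Since $T$ is injective we have $T(V)^\perp=\Ker T^\ast$ and $T^\ast:\tilde{V}\twoheadrightarrow V$ is surjective. From $TT^\ast=\tilde{X}$ and the injectivity of $T$, two one-line implications give $\Ker T^\ast=\Ker \tilde{X}$: if $T^\ast \tilde{v}=0$ then $\tilde{X}\tilde{v}=TT^\ast \tilde{v}=0$; conversely if $\tilde{X}\tilde{v}=0$ then $T(T^\ast\tilde{v})=0$ and injectivity of $T$ forces $T^\ast \tilde{v}=0$. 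Hence $T(V)=(\Ker \tilde{X})^\perp$, a subspace intrinsically defined by $\tilde{\gamma}$.

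Given $T,T'\in \Orb^{\Max}_{\gamma,\tilde{\gamma}}$ with common image, I define $g:=T^{-1}\circ T'\in \End_D(V)$, where $T^{-1}:T(V)\to V$ inverts the bijection $T:V\to T(V)$, so that $T'=Tg$ and $T'^\ast=g^\ast T^\ast$. The remaining steps are then routine: $T'T'^\ast=Tgg^\ast T^\ast=TT^\ast$ combined with the injectivity of $T$ and the surjectivity of $T^\ast$ forces $gg^\ast=1$, so $g\in G$; then $T'^\ast T'=g^\ast X g=X$ gives $g\in G_X$; and the weight conditions $T'(V_j)\subset \tilde{V}_{j+1}$ and $T(V_k)\subset \tilde{V}_{k+1}$ combined with the injectivity of $T$ force $g(V_j)\subset V_j$ for all $j$, so $g\in M$. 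Thus $g\in G_X\cap M=M_X$, and $T'=Tg=\tilde{m}Tm^{-1}$ with $\tilde{m}=e$, $m=g^{-1}\in M_X$. Therefore $\Orb^{\Max}_{\gamma,\tilde{\gamma}}$ is a single $M_X\times \tilde{M}_{\tilde{X}}$-orbit (indeed, already a single $M_X$-orbit). The only subtle point is the identification $\Ker T^\ast=\Ker \tilde{X}$, which unlocks the rest of the argument; I do not anticipate any further serious obstacle.
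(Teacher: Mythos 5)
Your proof is correct, but it takes a different route from the paper's. For non-emptiness the paper simply invokes the proof of Proposition \ref{prop:Gammatilde}; your version adds the (needed, if $\gamma,\tilde{\gamma}$ are arbitrary triples of the given types) conjugation step via Kostant's theorem, which is a point of extra care rather than a divergence. For transitivity the paper works through the fine structure: using the $\g_{\gamma}$-isotypic decomposition it shows each restriction $T^{\gamma,t_j}_i$ is an isomorphism onto $\tilde{V}^{\tilde{\gamma},t_j+1}_{i+1}$, that $T$ is determined by its highest-weight components $T^{\gamma,t_j}_{t_j-1}$, that these are isometries by \eqref{eq:isometry}, and then concludes from the product description \eqref{eq:productisometrygroups} of $M_X$ (Witt-type transitivity of isometry groups). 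You instead observe that the image is intrinsic, $T(V)=(\Ker\tilde{X})^{\perp}$ (via $\Ker T^{\ast}=\Ker\tilde{X}$, which uses injectivity of $T$), set $g=T^{-1}T'$, and verify directly that $gg^{\ast}=1$, $g\in G_X$, and $g$ preserves each $V_j$, hence $g\in M_X$; this is more elementary, avoids the isotypic bookkeeping, and yields the slightly stronger conclusion that $\Orb^{\Max}_{\gamma,\tilde{\gamma}}$ is already a single $M_X$-orbit (consistent with the paper, since the set of isometries $T^{\gamma,t_j}_{t_j-1}$ is a torsor under $\prod_j G(V^{\gamma,t_j}_{t_j-1})\cong M_X$). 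What the paper's longer computation buys is the explicit component-level structure of $T$, which is reused immediately afterwards to define the homomorphism $\phi_T$ in \eqref{defphi} and in the analysis of $J_T$, so your shortcut proves the lemma but would not replace that part of the paper's development. One cosmetic remark: strictly speaking one should also note that $\Orb^{\Max}_{\gamma,\tilde{\gamma}}$ is stable under $M_X\times\tilde{M}_{\tilde{X}}$ (a one-line check, also left implicit in the paper) before calling it a single orbit.
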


\begin{proof} From the proof of Proposition \ref{prop:Gammatilde}, we know $\Orb^{\Max}_{\gamma,\tilde{\gamma}}\neq \emptyset$.
Now let $T\in \Orb^{\Max}_{\gamma,\tilde{\gamma}}$ and set
\[
T_{i}^{\gamma,t_{j}}=T|_{V_{i}^{\gamma,t_{j}}} \qquad \mbox{ for all $1\leq j\leq l$, $-t_{j}+1\leq i \leq t_{j}-1$.}
\]
We claim that $T_{i}^{\gamma,t_{j}}$ defines a linear isomorphism between $V_{i}^{\gamma,t_{j}}$ and $\tilde{V}_{i+1}^{\tilde{\gamma},t_{j}+1}$ for all $1\leq j\leq l$ and $-t_{j}+1\leq i \leq t_{j}-1$. Since $TX=\tilde{X}T$, $T(V_{i})\subset \tilde{V}_{i+1}$ and $T$ is injective, we clearly have $T(V_{t_j-1}^{\gamma,t_{j}}) \subseteq
\tilde{V}_{t_{j}}^{\tilde{\gamma},t_{j}+1}$. By Lemma \ref{lemma:injsur}, $T_j$ is surjective for all $j\geq 0$. Thus any element of $\tilde{V}_{t_{j}}^{\tilde{\gamma},t_{j}+1}$ is of the form $T(v)$ for some $v\in V_{t_j-1}$. But then $\tilde{X}T(v)=TX(v)=0$ and since $T$ is injective, we must have $X(v)=0$. Thus $T_{t_j-1}^{\gamma,t_{j}}: V_{t_j-1}^{\gamma,t_{j}} \rightarrow \tilde{V}_{t_{j}}^{\tilde{\gamma},t_{j}+1}$ is a linear isomorphism. By applying an appropriate power of $\tilde{X}$, we may conclude that $T_{i}^{\gamma,t_{j}}$ is a linear isomorphism between $V_{i}^{\gamma,t_{j}}$ and $\tilde{V}_{i+1}^{\tilde{\gamma},t_{j}+1}$ for all $1\leq j\leq l$ and $-t_{j}+1\leq i \leq t_{j}-1$.

We set
\[
X_{i}^{\gamma,t_{j}}= X|_{V_{i}^{\gamma,t_{j}}} \qquad \mbox{and} \qquad \tilde{X}_{i}^{\gamma,t_{j}+1}=\tilde{X}|_{\tilde{V}_{i}^{\tilde{\gamma},t_{j}+1}}.
\]
Then it is clear that
$X_{i}^{\gamma,t_{j}}=(T_{-i-2}^{\gamma,t_{j}})^{\ast}T_{i}^{\gamma,t_{j}}$
and
$\tilde{X}_{i}^{\tilde{\gamma},t_{j}+1}=T_{i+1}^{\gamma,t_{j}}(T_{-i-1}^{\gamma,t_{j}})^{\ast}$.
From this we conclude that $T$ is completely determined by the maps
$T_{t_{j}-1}^{\gamma,t_{j}}$, for $j=1,\ldots,l$, but now, from
equation (\ref{eq:isometry}), all the $T_{t_{j}-1}^{\gamma,t_{j}}$'s
are isometries. From this and equation
(\ref{eq:productisometrygroups}) it follows immediately that
$\Orb^{\Max}_{\gamma,\tilde{\gamma}}$ is a single $M_{X}\times
\tilde{M}_{\tilde{X}}$-orbit.
\end{proof}

Let $\gamma=\{X,H,Y\}\subset \g$ and $\tilde{\gamma}=\{\tilde{X},\tilde{H},\tilde{Y}\}\subset \g'$ be two $\sl_{2}$-triples. Given $T\in \Orb^{\Max}_{\gamma,\tilde{\gamma}}$, we define a map $\phi_{T}:\tilde{M}_{\tilde{X}}\longrightarrow M_{X}$ by
\begin{equation}
\label{defphi}
\phi_{T}(\tilde{m})v=(T_{i}^{\gamma,t_{j}})^{-1}\tilde{m} T_{i}^{\gamma,t_{j}}v,
\end{equation}
for all $\tilde{m}\in \tilde{M}_{\tilde{X}}$, and $v\in V_{i}^{\gamma,t_j}$, where $1\leq j\leq l$ and $-t_{j}+1\leq i \leq t_{j}-1$. Note that as elements of $\Hom(V,\tilde{V})$, we have \[\tilde{m}T=T\phi_{T}(\tilde{m}),\]
for all $\tilde{m}\in \tilde{M}_{\tilde{X}}$.

\subsection{Isomorphism of $\g_{-1}\oplus \tilde{\g}_{-1}$ with a symplectic subspace $W_{\gamma,\tilde{\gamma}}$ of $\Hom(V,\tilde{V})$}
\label{subsection:embedding}

Let $\gamma=\{X,H,Y\}$ and $\tilde{\gamma}=\{\tilde{X},\tilde{H},\tilde{Y}\}$ be two $\sl_{2}$-triples of type $\Orb$ and $\Theta(\Orb)$, respectively.  From the proof of Proposition \ref{prop:Gammatilde}, we may assume that
\[
V=\bigoplus_{k=-r}^{r} V_{k} \qquad \mbox{and} \qquad \tilde{V}=\bigoplus_{k=-r-1}^{r+1} \tilde{V}_{k},
\]
for some $r$. We will set
\begin{equation}
\label{def:Wgg}
W_{\gamma,\tilde{\gamma}}=\bigoplus_{k=-r}^{r}
\Hom(V_{k},\tilde{V}_{k})\subset \Hom(V,\tilde{V}).
\end{equation}
Then it is clear that the restriction of the form $\langle \cdot,\cdot \rangle$, defined in Section \ref{reductivepairs}, to $W_{\gamma,\tilde{\gamma}}$ is non-degenerate. 
Now fix $T \in \Orb^{\Max}_{\gamma,\tilde{\gamma}}$, and define
\begin{equation}
\label{eq:sigmaT}
\begin{array}{rcl}
J_{T}: \ \ \g_{-1}\oplus \tilde{\g}_{-1} & \longrightarrow & W_{\gamma,\tilde{\gamma}}, \\
(R,\tilde{R}) \hspace{10pt} & \mapsto & \hspace{-5pt} TR+\tilde{R}T. 
\end{array}
\end{equation}
Obviously, we have $J_{T}(R,\tilde{R})(V_{k})\subseteq \tilde{V}_{k}$ for all $k$.

\begin{lemma}
\label{lem:sigma}
$J_{T}$ defines an isomorphism between
$\g_{-1}\oplus \tilde{\g}_{-1}$ and $W_{\gamma,\tilde{\gamma}}$. Furthermore,
\[
\langle J_{T}(R_{1},\tilde{R}_{1}),J_{T}(R_{2},\tilde{R}_{2})\rangle=-\kappa_{-1}(R_{1},R_{2})+\tilde{\kappa}_{-1}(\tilde{R}_{1},\tilde{R}_{2}),
\]
for all $R_{1}$, $R_{2}\in \g_{-1}$, $\tilde{R}_{1}$, $\tilde{R}_{2}\in \tilde{\g}_{-1}$. Here the bilinear form $\kappa$ on $\g$ is normalized as in \eqref{knormalized}, and likewise for $\tilde{\kappa}$.
\end{lemma}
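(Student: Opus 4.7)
The plan is to establish the symplectic formula first, then deduce the isomorphism from it together with a dimension count. For the formula, I would expand $\langle J_T(R_1,\tilde R_1), J_T(R_2,\tilde R_2)\rangle = \Tr((TR_1+\tilde R_1 T)^*(TR_2+\tilde R_2 T))$ using $(TR_i)^* = -R_i T^*$ and $(\tilde R_i T)^* = -T^*\tilde R_i$ (from $R_i^* = -R_i$, $\tilde R_i^* = -\tilde R_i$), together with the identities $T^*T = X$ and $TT^* = \tilde X$ that hold because $T \in \Orb^{\Max}_{\gamma,\tilde\gamma}$. A cyclic rearrangement inside the trace then produces
\[
-\Tr(R_1 X R_2) \;-\; \Tr(TR_1 T^*\tilde R_2) \;-\; \Tr(\tilde R_1 TR_2 T^*) \;-\; \Tr(\tilde R_1\tilde R_2\tilde X).
\]

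The two mixed terms vanish by the antisymmetry of $\langle\cdot,\cdot\rangle$ on $\Hom(V,\tilde V)$, itself a consequence of $T^{**} = -T$ (since $\varepsilon\tilde\varepsilon = -1$): this forces $T^*T \in \g$, so $\Tr(T^*T) = 0 = \langle T, T\rangle$. Applying $\langle TR_1,\tilde R_2 T\rangle = -\langle\tilde R_2 T,TR_1\rangle$ and rearranging cyclically yields $\Tr(TR_1 T^*\tilde R_2) = 0$; the other mixed term vanishes similarly. For the pure terms, $(R_1 R_2)^* = R_2 R_1$ shows $\{R_1, R_2\} \in \Her(V)$, and combining $X^* = -X$ with $\Tr(A^*) = \Tr(A)$ for $\k$-linear operators gives $\Tr(X\{R_1,R_2\}) = 0$. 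Together with $\kappa_{-1}(R_1,R_2) = -\tfrac{1}{2}\Tr(X[R_1,R_2])$, this identifies $-\Tr(R_1 X R_2) = -\kappa_{-1}(R_1,R_2)$, and the parallel manipulation on $\tilde V$ rewrites $-\Tr(\tilde R_1\tilde R_2\tilde X) = \tilde\kappa_{-1}(\tilde R_1,\tilde R_2)$. The formula follows.

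For the isomorphism, the formula exhibits $J_T$ as an isometry between $(\g_{-1}\oplus\tilde\g_{-1},\; -\kappa_{-1}\oplus\tilde\kappa_{-1})$ and $(W_{\gamma,\tilde\gamma},\; \langle\cdot,\cdot\rangle)$. Both forms are non-degenerate: the source because $\kappa_{-1}$ and $\tilde\kappa_{-1}$ are symplectic, and the target because $S \mapsto S^*$ sends $\Hom(V_k,\tilde V_k)$ isomorphically onto $\Hom(\tilde V_{-k}, V_{-k})$, which pairs perfectly with $\Hom(V_{-k},\tilde V_{-k})$ via the trace. Hence $J_T$ is injective. A dimension count using Proposition \ref{prop:Gammatilde}---specifically $\dim V_l = \dim \tilde V_{l+1}$ for $l\geq 0$ and the symmetries $\dim V_l = \dim V_{-l}$, $\dim \tilde V_l = \dim \tilde V_{-l}$---shows $\dim(\g_{-1}\oplus\tilde\g_{-1}) = \dim W_{\gamma,\tilde\gamma}$, so $J_T$ is an isomorphism. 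The main obstacle will be the trace gymnastics in proving the formula, particularly the cancellation of the mixed terms via the antisymmetry of $\langle\cdot,\cdot\rangle$.
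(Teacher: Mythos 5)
Your proposal is correct and follows essentially the same route as the paper: the same trace manipulations using $T^{*}T=X$, $TT^{*}=\tilde{X}$ with the mixed terms killed by the antisymmetry of $\langle\cdot,\cdot\rangle$, then injectivity from non-degeneracy of $\kappa_{-1}\oplus\tilde{\kappa}_{-1}$ and surjectivity from the dimension count based on $\dim V_{k}=\dim\tilde{V}_{k+1}$ for $k\geq 0$ and the weight-space symmetries. The only detail left implicit is the identification $\g_{-1}\cong\bigoplus_{k\leq 0}\Hom(V_{k},V_{k-1})$ via $S\mapsto S-S^{*}$ (and likewise for $\tilde{\g}_{-1}$), which the paper uses to carry out that count explicitly.
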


\begin{proof}
First observe that if $R_{1}$, $R_{2}\in \g_{-1}$, then
\begin{eqnarray*}
\langle TR_{1},TR_{2} \rangle & = & \Tr ((TR_{1})^{\ast} TR_{2}) = \Tr (R_{1}^{\ast}T^{\ast} TR_{2})\\
                              & = & -\Tr (R_{1}XR_{2}) =\Tr(R_{2}XR_{1})\\
                              & = & -\kappa_{-1}(R_{1},R_{2}).
\end{eqnarray*}
Similarly $\langle
\tilde{R}_{1}T,\tilde{R}_{2}T \rangle=\tilde{\kappa}_{-1}(\tilde{R}_{1},\tilde{R}_{2})$. On the other hand, if $R\in \g_{-1}$, $\tilde{R}\in \tilde{\g}_{-1}$, then
\[
\langle TR,\tilde{R}T\rangle =\Tr(R^{\ast}T^{\ast}\tilde{R}T)=-\Tr(RT^{\ast}\tilde{R}T)
\]
and
\[
\langle\tilde{R}T,TR \rangle =\Tr(T^{\ast}\tilde{R}^{\ast}TR)=-\Tr(T^{\ast}\tilde{R}TR).
\]
Therefore,
$\langle \tilde{R}T,TR\rangle =0$, since
$\langle \cdot, \cdot \rangle $ is symplectic. We have thus shown that
 \[
\langle J_{T}(R_{1},\tilde{R}_{1}),J_{T}(R_{2},\tilde{R}_{2})\rangle=-\kappa_{-1}(R_{1},R_{2})+\tilde{\kappa}_{-1}(\tilde{R}_{1},\tilde{R}_{2}),
\]
for all $R_{1}$, $R_{2}\in \g_{-1}$, $\tilde{R}_{1}$, $\tilde{R}_{2}\in \tilde{\g}_{-1}$. But now, since $\kappa_{-1}$ and $\tilde{\kappa}_{-1}$ are non-degenerate, we conclude that $J_{T}$ is injective.

Now observe that, for all $S_k\in \Hom(V_{k},V_{k-1})$, $S_k-S_k^{\ast}\in\g_{-1}$. On the other hand, we can write $S \in \g_{-1}$ as the direct sum $\sum_{k=-r}^{r}S_k$, where $S_k\in \Hom(V_{k},V_{k-1})$. Then we have that $S_k^*=-S_{-k+1}$ for all $k$, and hence, the map $S\mapsto S-S^{\ast}$ defines a linear isomorphism between $\oplus_{k\leq 0} \Hom(V_{k},V_{k-1})$ and $\g_{-1}$. From this, we conclude that
\begin{eqnarray*}
\dim \g_{-1}& = & \sum_{k=0}^{-r+1} \dim V_{k}\cdot \dim V_{k-1} \\
                    & = & \sum_{k=0}^{-r+1} \dim \tilde{V}_{k-1}\cdot \dim V_{k-1} \\
                    & = & \sum_{k=-1}^{-r} \dim \tilde{V}_{k}\cdot \dim V_{k}.
\end{eqnarray*}
Here we have used the fact that $\dim V_{k} =\dim \tilde{V}_{k-1}$ for $k\leq 0$, in view of the linear isomorphism $T^*_{-k}: \tilde{V}_{k-1}\rightarrow V_{k}$. Similarly,
\begin{eqnarray*}
\dim \tilde{\g}_{-1}& = & \sum_{k=0}^{-r} \dim \tilde{V}_{k}\cdot \dim \tilde{V}_{k-1} \\
                    & = & \sum_{k=0}^{-r} \dim \tilde{V}_{k}\cdot \dim V_{k}\\
                    & = & \sum_{k=0}^{r} \dim \tilde{V}_{k}\cdot \dim V_{k}.
\end{eqnarray*}
Therefore,
\begin{eqnarray*}
\dim \g_{-1}+\dim \tilde{\g}_{-1} & = & \sum_{k=-1}^{-r}\dim \tilde{V}_{k}\cdot \dim V_{k}+\sum_{k=0}^{r}\dim \tilde{V}_{k}\cdot \dim V_{k} \\
                              & = & \sum_{k=-r}^{r} \dim \Hom(V_{k},\tilde{V}_{k})=\dim W_{\gamma,\tilde{\gamma}},
\end{eqnarray*}
and hence, $J_{T}$ must be a bijection.
\end{proof}

\section{Generalized Whittaker models and Howe correspondence}
\label{liftWhittaker}

\subsection{The smooth oscillator-Heisenberg representation $\HH_{\gamma,\tilde{\gamma}}$ associated to $W_{\gamma,\tilde{\gamma}}$}
\label{Heisenberg} Let
$H_{\gamma,\tilde{\gamma}}$ be the Heisenberg group associated to
the symplectic space $W_{\gamma,\tilde{\gamma}}$. That is,
$H_{\gamma,\tilde{\gamma}}=W_{\gamma,\tilde{\gamma}}\times \k$,
where $\{0\}\times \k$ is central, and $(R,0)(S,0) =(R+S,\langle R,
S\rangle/2)$. Let $\Mp(W_{\gamma,\tilde{\gamma}})$ be the
metaplectic group associated to $W_{\gamma,\tilde{\gamma}}$, and
$(\tau_{\gamma,\tilde{\gamma}},\HH_{\gamma,\tilde{\gamma}})$ be the
smooth oscillator-Heisenberg representation of
$\Mp(W_{\gamma,\tilde{\gamma}})\ltimes H_{\gamma,\tilde{\gamma}}$
associated to the character $\psi$.

Recall that we have fixed $T \in \Orb^{\Max}_{\gamma,\tilde{\gamma}}$, and we have an isomorphism $J_{T}$ from
$\g_{-1}\oplus \tilde{\g}_{-1}$ to $W_{\gamma,\tilde{\gamma}}$, given in equation (\ref{eq:sigmaT}).
Define a new invariant bilinear form $\kappa'$ on
$\g$ by setting
$\kappa'(R,S)=-\kappa(R,S)$, for all $R$, $S\in
\g$. This results in a new symplectic structure on
$\g_{-1}$, given by
$\kappa_{-1}'(R,S)=-\kappa_{-1}(R,S)$, for all
$R$, $S\in \g_{-1}$, and hence a new Heisenberg group
$H_{\gamma}'$. With this new symplectic structure,
$J_{T}|_{\g_{-1}}$ is a morphism of symplectic spaces,
and hence,  we may extend $J_{T}$ to an injective morphism of groups
$J_{T}: H_{\gamma}'\longrightarrow
H_{\gamma,\tilde{\gamma}}$. Observe that we can proceed similarly for $\tilde{\g}_{-1}$, but for this space the modification of the symplectic structure is unnecessary. We obtain in this way a map
\begin{equation}
\label{eq:extendJT}
J_{T}:H_{\gamma}'\times H_{\tilde{\gamma}}  \longrightarrow   H_{\gamma,\tilde{\gamma}}.
\end{equation}

As in Section \ref{subsec:GWM}, we let
$\alpha_{\gamma}':N\longrightarrow H_{\gamma}'$ be the map induced
by $\gamma$ and
the bilinear form $\kappa'$, 
that is
\[
\alpha_{\gamma}'(\exp R\exp Z)=(R,\kappa'(X,Z)), \ \ \mbox{for all $R\in \g_{-1}$, $Z\in \u$.}
\]
Similarly, define $\alpha_{\tilde{\gamma}}:\tilde{N}\longrightarrow H_{\tilde{\gamma}}$. 
By composing with the map $J_{T}$, we have a group
homomorphism
\begin{equation}
\label{eq:extendJT2}
\alpha_{T}=\alpha_{T,\gamma,\tilde{\gamma}}: N\times \tilde{N} \longrightarrow H_{\gamma,\tilde{\gamma}}
\end{equation}
given by $\alpha_{T}(n,\tilde{n})=J_{T}(\alpha_{\gamma}'(n),\alpha_{\tilde{\gamma}}(\tilde{n}))$.

Now observe that for all $\tilde{m}\in \tilde{M}_{\tilde{X}}$, $\tilde{R}\in \tilde{\g}_{-1}$,
\begin{equation}
J_{T}(\Ad(\tilde{m})\tilde{R})=\tilde{m}\tilde{R}\tilde{m}^{-1}T=\tilde{m}\tilde{R}T\phi_{T}(\tilde{m})^{-1}. \label{eq:tildeMXaction}
\end{equation}
On the other hand, since
$\phi_{T}:\tilde{M}_{\tilde{X}}\longrightarrow M_{X}$ is surjective,
then for any $m\in M_{X}$ we can find $\tilde{m}\in
\tilde{M}_{\tilde{X}}$ such that $\phi_{T}(\tilde{m})=m$. Moreover,
for all $R\in \g_{-1}$,
\begin{equation}
J_{T}(\Ad(\phi_{T}(\tilde{m}))R)=T\phi_{T}(\tilde{m})R\phi_{T}(\tilde{m})^{-1}=\tilde{m}TR\phi_{T}(\tilde{m})^{-1}. \label{eq:MXaction}
\end{equation}
Using equations (\ref{eq:tildeMXaction}) and (\ref{eq:MXaction}), we
define an action of $M_{X}\times \tilde{M}_{\tilde{X}}$ on
$W_{\gamma,\tilde{\gamma}}$, by
\[
\tilde{m}\cdot S =\left\{ \begin{array}{cl}
\tilde{m}S\phi_{T}(\tilde{m})^{-1} & \mbox{if $S\in J_{T}(\tilde{\g}_{-1})$}\\
S & \mbox{if $S\in J_{T}(\g_{-1})$,}\end{array}\right.
\]
if $\tilde{m}\in \tilde{M}_{\tilde{X}}$, and
\[
m \cdot S =\left\{ \begin{array}{cl}
S & \mbox{if $S\in J_{T}(\tilde{\g}_{-1})$}\\
\tilde{m}S\phi_{T}(\tilde{m})^{-1} & \mbox{if $S\in J_{T}(\g_{-1})$,}
\end{array}\right.
\]
if $m=\phi_{T}(\tilde{m})\in M_{X}$, for some $\tilde{m}\in
\tilde{M}_{\tilde{X}}$. Observe that in particular, for all
$\tilde{m}\in \tilde{M}_{\tilde{X}}$, and all $S\in
W_{\gamma,\tilde{\gamma}}$, we have
\begin{equation}
(\tilde{m},\phi_{T}(\tilde{m}))\cdot S = \tilde{m}S\phi_{T}(\tilde{m})^{-1}. \label{eq:combinedMXtildeMXaction}
\end{equation}
Using this action and the functorial property of the oscillator-Heisenberg representation \cite{HoUP}, we extend the map $\alpha_{T}$ to a group homomorphism
\begin{equation}
\alpha_{T}:M_{\chi_{\gamma}}N\times \tilde{M}_{\chi_{\tilde{\gamma}}}\tilde{N}\longrightarrow \Mp(W_{\gamma,\tilde{\gamma}})\ltimes H_{\gamma,\tilde{\gamma}}. \label{eq:extendedalphaTmap}
\end{equation}
By pulling back, this yields a representation
\begin{equation}
\label{eq:deftaut}
\tau_{\gamma,\tilde{\gamma}}^{T}:=\tau_{\gamma,\tilde{\gamma}}\circ \alpha_{T}
\end{equation}
of $M_{\chi_{\gamma}}N\times \tilde{M}_{\chi_{\tilde{\gamma}}}\tilde{N}$ on $\HH_{\gamma,\tilde{\gamma}}$.
Observe that for all $Z\in \u$, $v\in \HH_{\gamma,\tilde{\gamma}}$,
\begin{eqnarray*}
\tau_{\gamma,\tilde{\gamma}}^{T}(\exp Z,\tilde{e})v& = & \tau_{\gamma,\tilde{\gamma}}(0,\kappa'(X,Z))v \\
                                           &= & \psi(\kappa'(X,Z))v=\psi(-\kappa(X,Z))v \\
                                          & = & \chi_{\check{\gamma}}(\exp Z)v=\check{\chi}_{\gamma}(\exp Z)v,
\end{eqnarray*}
where $\tilde{e}$ is the identity element in $\tilde{M}_{\chi_{\tilde{\gamma}}}\tilde{N}$. Similarly, for all $Z\in \u$, $v\in \HH_{\gamma,\tilde{\gamma}}$,
\[
\tau_{\gamma,\tilde{\gamma}}^{T}(e,\exp \tilde{Z})v=\chi_{\tilde{\gamma}}(\exp \tilde{Z})v,
\]
where $e$ is the identity element in $M_{\chi_{\gamma}}N$. Using this, and the fact that $J_T: \g_{-1}\oplus\tilde{\g}_{-1}\rightarrow W_{\gamma,\tilde{\gamma}}$ is an isomorphism, we obtain the following result.

\begin{lemma}\label{lemma:tauTisomorphism} As $M_{\chi_{\gamma}}N\times
\tilde{M}_{\chi_{\tilde{\gamma}}}\tilde{N}$-modules, we have
\[
\tau_{\gamma,\tilde{\gamma}}^{T}\cong \S_{\check{\chi}_{\gamma}}\otimes
\S_{\chi_{\tilde{\gamma}}},
\]
where $\tau_{\gamma,\tilde{\gamma}}^{T}=\tau_{\gamma,\tilde{\gamma}}\circ \alpha_{T}$.
\end{lemma}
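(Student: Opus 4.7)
The plan is to deduce the factorization from the symplectic decomposition already established in Lemma \ref{lem:sigma}. Recall that lemma gives
\[
\langle J_T(R_1,\tilde{R}_1), J_T(R_2,\tilde{R}_2)\rangle = -\kappa_{-1}(R_1,R_2) + \tilde{\kappa}_{-1}(\tilde{R}_1,\tilde{R}_2).
\]
Specializing to $\tilde{R}_1=\tilde{R}_2=0$ and to $R_1=R_2=0$, and then to $R_2=0, \tilde{R}_1=0$, shows that $J_T$ restricts to symplectic isomorphisms $(\g_{-1},\kappa'_{-1}) \xrightarrow{\sim} J_T(\g_{-1})$ and $(\tilde{\g}_{-1},\tilde{\kappa}_{-1}) \xrightarrow{\sim} J_T(\tilde{\g}_{-1})$, and that these two subspaces of $W_{\gamma,\tilde{\gamma}}$ are \emph{symplectically orthogonal}. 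Hence $W_{\gamma,\tilde{\gamma}} = J_T(\g_{-1}) \oplus J_T(\tilde{\g}_{-1})$ is a symplectic orthogonal direct sum.

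Next I would invoke the standard functorial property of the smooth oscillator-Heisenberg representation: whenever $W = W_1 \oplus W_2$ is a symplectic orthogonal decomposition, the Heisenberg groups satisfy $H_W = (H_{W_1} \times H_{W_2})/\Delta$ (where $\Delta$ is the anti-diagonally embedded center) and there is a natural isomorphism $\HH_W \cong \HH_{W_1}\,\widehat{\otimes}\,\HH_{W_2}$ of representations of $(\Mp(W_1) \ltimes H_{W_1}) \times (\Mp(W_2) \ltimes H_{W_2})$, the latter embedding in $\Mp(W) \ltimes H_W$ in the evident way. Applying this with $W_1 = J_T(\g_{-1})$, $W_2 = J_T(\tilde{\g}_{-1})$ gives $\HH_{\gamma,\tilde{\gamma}} \cong \HH_{J_T(\g_{-1})}\,\widehat{\otimes}\,\HH_{J_T(\tilde{\g}_{-1})}$. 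Pulling back by $\alpha_T$ and using that the $M_{\chi_{\gamma}}$-factor of $\alpha_T$ acts trivially on the $J_T(\tilde{\g}_{-1})$-component (and symmetrically, by \eqref{eq:tildeMXaction} and \eqref{eq:MXaction} together with the definition just above the lemma), the two tensor factors are $M_{\chi_{\gamma}}N$-stable and $\tilde{M}_{\chi_{\tilde{\gamma}}}\tilde{N}$-stable respectively, with the two actions commuting thanks to the orthogonality.

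To identify each factor with the desired oscillator-Heisenberg representation, I would argue via Stone--von Neumann: the pullback of $\HH_{J_T(\g_{-1})}$ along $M_{\chi_{\gamma}}\ltimes N \xrightarrow{\alpha_T} \Mp(J_T(\g_{-1}))\ltimes H_{J_T(\g_{-1})}$ is a smooth representation whose restriction to $N$ factors through $\alpha'_{\gamma}: N \to H'_{\gamma}$ and therefore has central character $\psi$ on the center of $H'_{\gamma}$, namely $U$ acts by $\psi(\kappa'(X,Z)) = \check{\chi}_{\gamma}(\exp Z)$, as is verified in the display immediately preceding the lemma. By uniqueness of the smooth oscillator-Heisenberg representation attached to $(U,\check{\chi}_{\gamma})$ and the characterization of the cover $M_{\chi_{\gamma}}$ (which is defined exactly so that this extension exists and is unique), the pullback is isomorphic to $\S_{\check{\chi}_{\gamma}}$ as an $M_{\chi_{\gamma}}N$-module. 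The same argument on the $\tilde{\gamma}$ side yields $\S_{\chi_{\tilde{\gamma}}}$. The main subtle point is precisely this metaplectic/cover bookkeeping --- ensuring that $\alpha_T$ as constructed in \eqref{eq:extendedalphaTmap} really does induce the correct genuine representation of each covering group $M_{\chi_{\gamma}}$ and $\tilde{M}_{\chi_{\tilde{\gamma}}}$ --- but this is built into the definitions via the functoriality of the oscillator correspondence and the Stone--von Neumann uniqueness theorem.
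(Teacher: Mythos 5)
Your argument is correct and is essentially the paper's own: the paper deduces the lemma from the preceding central-character computations ($U$ acting by $\check{\chi}_{\gamma}$ and $\tilde{U}$ by $\chi_{\tilde{\gamma}}$ under $\tau_{\gamma,\tilde{\gamma}}^{T}$) together with the fact, from Lemma \ref{lem:sigma}, that $J_{T}$ is a symplectic isomorphism making $J_{T}(\g_{-1})$ and $J_{T}(\tilde{\g}_{-1})$ orthogonal, and then invokes the functoriality/tensor decomposition of the oscillator-Heisenberg representation and Stone--von Neumann exactly as you do. Your write-up merely makes the orthogonal splitting and the identification of each tensor factor more explicit.
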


\subsection{The main result and the key proposition}
We recall some notation from Section \ref{sec:liftOrbit}. Let $(V,B)$ and $(\tilde{V},\tilde{B})$ be an $\epsilon$-Hermitian and an $\tilde{\epsilon}$-Hermitian $D$-module, respectively, with $\varepsilon\tilde{\varepsilon}=-1$. Let $G(V)$, $G(\tilde{V})$ be the corresponding isometry groups and $\g=\g(V)$, $\tilde{\g}=\g(\tilde{V})$ their Lie algebras. Then $(G(V),G(\tilde{V}))$ form a dual pair in $\Sp(\Hom(V,\tilde{V}))$ in the sense of Howe \cite{Ho79}.

 Let $\Mp(\Hom(V,\tilde{V}))$ be the metaplectic group. This is the unique topological central extension of $\Sp(\Hom(V,\tilde{V}))$ by $\{\pm 1\}$ such that it splits if $\k$ is $\C$, and it does not split otherwise. Denote the pullbacks of $G(V)$, $G(\tilde{V})$ in $\Mp(\Hom(V,\tilde{V}))$ by $G$ and $\tilde{G}$, respectively. Then $(G,\tilde{G})$ form a dual pair in $\Mp(\Hom(V,\tilde{V}))$.
 Let $(\omega,\Y)$ be the smooth oscillator representation of $\Mp(\Hom(V,\tilde{V}))$ associated to the character $\psi$.
It is well-known that after tensoring with a genuine character of $G\times \tilde{G}$, $(\omega , \Y)$ yields a representation of $G(V)\times G(\tilde{V})$, except when one of them, say $V$, is an odd dimensional quadratic space and $\k\ne \C$. In this exceptional case, $\tilde{G}$ is the metaplectic cover of $G(\tilde{V})$, and we shall only consider genuine representations of $\tilde{G}$. For all other cases,
representations of $G(V)$ are identified with genuine representations of $G$ (by tensoring with a fixed genuine character of $G$). With this convention/caveat in mind and for the rest of this article, we will make little distinction between $G$ and $G(V)$, and loosely speak of genuine representations of $G$. Likewise for $\tilde{G}$ and $G(\tilde{V})$.
This convention will lead to significant savings in notation and no confusion is expected for the expert reader.

We state the main result of this article, in a more concrete form than Theorem \ref{MainThm} of the introduction.

\begin{theorem}\label{thm:maintheorem}
Let $(G,\tilde{G})$ be a reductive dual pair, and let $(\omega , \Y
)$ be the smooth oscillator representation associated to the dual
pair $(G,\tilde{G})$ and to the character $\psi$ of $\k$. Let
$(\pi,\mathscr{V})$ be a smooth irreducible genuine representation
of $G$. Let $\Orb\subset \g$ be a
nilpotent $G$-orbit in the image of $\Max \Hom (V,\tilde{V})$ under
the moment map $\varphi$ and let $\tilde{\Orb}=\Theta(\Orb)\subset
\tilde{\g}$ be the corresponding nilpotent $\tilde{G}$-orbit. Then
\[
\Wh_{\tilde{\Orb}}(\Theta(\pi))\cong \Wh_{\Orb}(\check{\pi}),
\]
where $\check{\pi}$ is the contragredient representation of $\pi$. More precisely, let $\gamma$ and $\tilde{\gamma}$ be two $\mathfrak{sl}_{2}$-triples of type $\Orb$ and $\tilde{\Orb}$, respectively. Then, given any $T\in \Orb_{\gamma,\tilde{\gamma}}^{\Max}$, there exists an isomorphism
\[
\Phi_{T}:\Wh_{\gamma}(\check{\pi})\longrightarrow \Wh_{\tilde{\gamma}}(\Theta(\pi)),
\]
such that
\[
\tilde{m}\Phi_{T}(\lambda)=\Phi_{T}(\phi_{T}(\tilde{m})\lambda) \qquad \mbox{for all $\tilde{m}\in\tilde{M}_{\chi_{\tilde{\gamma}}}$, $\lambda\in \Wh_{\gamma}(\check{\pi})$},
\]
where $\phi_{T}:\tilde{M}_{\chi_{\tilde{\gamma}}}\twoheadrightarrow
M_{\chi_{\gamma}}$ is as in equation (\ref{defphi}).
\end{theorem}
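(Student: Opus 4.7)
The plan is to reduce $\Wh_{\tilde{\gamma}}(\Theta(\pi))$ to $\Wh_{\gamma}(\check{\pi})$ via a chain of canonical isomorphisms built on two pillars: the forthcoming key proposition on the $\tilde U$-covariants of $\Y$, and Proposition~\ref{prop:SchwartzWhittakerModels}.  Since $\Y_{G,\pi}\cong\pi\otimes\Theta(\pi)$ and $\pi\otimes\S_{\chi_{\tilde\gamma}}$ is $\pi$-isotypic as a $G$-module, Schur's lemma at once produces
\[
\Wh_{\tilde{\gamma}}(\Theta(\pi))\;=\;\Hom_{\tilde N}(\Theta(\pi),\S_{\chi_{\tilde\gamma}})\;\cong\;\Hom_{G\times\tilde N}(\Y,\pi\otimes\S_{\chi_{\tilde\gamma}}),
\]
and the task becomes to compute the right-hand side.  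The heart of the argument is the \emph{key proposition}: for any $T\in\Orb^{\Max}_{\gamma,\tilde\gamma}$, a natural ``matrix coefficient'' map $\Y\to\Sch(N\backslash G;\HH_{\gamma,\tilde\gamma})$, constructed from the homomorphism $\alpha_T\colon N\times\tilde N\to H_{\gamma,\tilde\gamma}$ of~\eqref{eq:extendJT2} (built on $J_T$ of Lemma~\ref{lem:sigma}) together with the ``homogeneous components'' of $T$ relative to the gradings $V=\bigoplus V_k$, $\tilde V=\bigoplus\tilde V_k$, descends to an isomorphism $\Y_{\tilde U,\chi_{\tilde\gamma}}\cong\Sch(N\backslash G;\HH_{\gamma,\tilde\gamma})$ of $G\times\tilde M_{\chi_{\tilde\gamma}}\tilde N$-modules, with $M_{\chi_\gamma}$-equivariance carried by $\phi_T$ from~\eqref{defphi}.

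To prove the key proposition I would first choose a polarization of $\Hom(V,\tilde V)$ adapted to the two gradings, giving a convenient Schr\"odinger realization of $\Y$ in which the matrix-coefficient map can be written explicitly, and then argue by induction on $r=\max\{k:V_k\ne 0\}$: at each stage peel off the outermost strata $V_{\pm r},\tilde V_{\pm(r+1)}$, verify that the reduced dual pair acting on $\bigoplus_{|k|<r}V_k$ and $\bigoplus_{|k|<r+1}\tilde V_k$ still satisfies~\eqref{assumption0}, check that $W_{\gamma,\tilde\gamma}$ of~\eqref{def:Wgg} restricts compatibly, and in the Archimedean case promote the pointwise identities to Fr\'echet isomorphisms by means of the rapidly-decreasing function estimates of Section~\ref{frechet}.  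With the key proposition in hand, Lemma~\ref{lemma:tauTisomorphism} factorises $\HH_{\gamma,\tilde\gamma}\cong\S_{\check\chi_\gamma}\otimes\S_{\chi_{\tilde\gamma}}$ with the $\tilde N$-action concentrated on the second tensor factor, and smooth Frobenius reciprocity (in the spirit of Lemma~\ref{lemma:smFrobenius}, with $\S_{\chi_{\tilde\gamma}}$ realised as Schwartz induction from $\tilde N_{\tilde{\mathfrak f}}$) then converts the key proposition into the $G\times\tilde M_{\chi_{\tilde\gamma}}$-equivariant isomorphism $\Hom_{\tilde N}(\Y,\S_{\chi_{\tilde\gamma}})\cong\Sch(N\backslash G;\S_{\chi_{\check\gamma}})$, with $\tilde M_{\chi_{\tilde\gamma}}$ acting on the right-hand side through $\phi_T$.

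Passing to the $\pi$-isotypic quotient on the $G$-side and invoking Proposition~\ref{prop:SchwartzWhittakerModels} on the right, while on the left using $\Y_{G,\pi}\cong\pi\otimes\Theta(\pi)$ to rewrite the $\pi$-isotypic quotient of $\Hom_{\tilde N}(\Y,\S_{\chi_{\tilde\gamma}})$ as $\pi\otimes\Wh_{\tilde\gamma}(\Theta(\pi))$, produces the required isomorphism $\Wh_{\tilde\gamma}(\Theta(\pi))\cong\Wh_\gamma(\check\pi)$ (after cancelling the common $\pi$-factor by Schur's lemma), with the $\tilde M_{\chi_{\tilde\gamma}}$-equivariance through $\phi_T$ inherited step by step from the equivariance already built into the key proposition.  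The principal obstacle is the key proposition itself, specifically the \emph{injectivity} of the induced map on $(\tilde U,\chi_{\tilde\gamma})$-covariants: while surjectivity can be built into the construction of the matrix-coefficient map, injectivity demands that at each inductive step the residual Heisenberg-oscillator data on the remaining strata match precisely the symplectic structure on $W_{\gamma,\tilde\gamma}$ predicted by $J_T$, and in the Archimedean case this matching must be carried out uniformly in the Fr\'echet topology so that the pointwise identities promote to continuous isomorphisms.
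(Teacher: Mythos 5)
Your route is essentially the paper's: the same key proposition on the covariants (Proposition \ref{prop:mainproposition}, refined as Proposition \ref{prop:tildechicoinvariants} and proved by induction on $r$ in the mixed model), the factorization $\HH_{\gamma,\tilde{\gamma}}\cong\S_{\check{\chi}_{\gamma}}\otimes\S_{\chi_{\tilde{\gamma}}}$ of Lemma \ref{lemma:tauTisomorphism}, Proposition \ref{prop:SchwartzWhittakerModels}, and the comparison of the two orders of passing to isotypic pieces of $\Y$, with Schur's lemma cancelling the common $\pi$-factor and $\phi_{T}$ carrying the $\tilde{M}_{\chi_{\tilde{\gamma}}}$-equivariance, so the proposal is correct in substance. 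The only caveat is a duality/variance slip in your Hom-space bookkeeping: $\Hom_{\tilde{N}}(\Y,\S_{\chi_{\tilde{\gamma}}})$ is the continuous dual of $\Sch(N\backslash G;\S_{\check{\chi}_{\gamma}})$, not that space itself, and the $\pi$-part of such a Hom space is naturally a subspace isomorphic to $\check{\pi}\otimes\Wh_{\tilde{\gamma}}(\Theta(\pi))$ rather than a ``$\pi$-isotypic quotient''; the paper sidesteps this by comparing the double covariants $\Y_{(\tilde{U},\chi_{\tilde{\gamma}}),(G,\pi)}$ and $\Y_{(G,\pi),(\tilde{U},\chi_{\tilde{\gamma}})}$ directly, with Proposition \ref{prop:SchwartzWhittakerModels} supplying $W_{\gamma}(\check{\pi})$ (the dual of $\Wh_{\gamma}(\check{\pi})$) so the duals cancel at the end, and your argument goes through once the duals are tracked in the same way.
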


The key to our proof of Theorem \ref{thm:maintheorem} is the following result, which computes $\Y_{\tilde{U},\chi_{\tilde{\gamma}}}$, the $(\tilde{U},\chi_{\tilde{\gamma}})$-isotypic quotient of $\Y$. See Section \ref{frechet} for the unexplained notation.

\begin{proposition}\label{prop:mainproposition} 
 Let $\gamma\subset \g$ and $\tilde{\gamma}\subset \tilde{\g}$ be two $\sl_{2}$-triples of type $\Orb$ and $\Theta(\Orb)$, respectively. Then, given any $T\in \Orb_{\gamma,\tilde{\gamma}}^{\Max}$, there exists a $ G\times \tilde{M}_{\chi_{\tilde{\gamma}}}\tilde{N}$-intertwining isomorphism
\begin{equation}
\label{covariants}
\Psi_{T}:\Y_{\tilde{U},\chi_{\tilde{\gamma}}}\longrightarrow \Sch(N\backslash G;\HH_{\gamma,\tilde{\gamma}}),
\end{equation}
where $M_{\chi_{\gamma}}N\times \tilde{M}_{\chi_{\tilde{\gamma}}}\tilde{N}$ acts on $\HH_{\gamma,\tilde{\gamma}}$ via the representation $\tau_{\gamma,\tilde{\gamma}}^{T}$ defined in equation (\ref{eq:deftaut}), and the action of $G\times \tilde{M}_{\chi_{\tilde{\gamma}}}\tilde{N}$ on $\Sch(N\backslash G;\HH_{\gamma,\tilde{\gamma}})$ is defined in the following way: given $f\in \Sch(N\backslash G;\HH_{\gamma,\tilde{\gamma}})$ and $g\in G$,
\begin{eqnarray}
(g'\cdot f)(g) & = & f(gg') \qquad \mbox{for all $g'\in G$,} \label{eq:tildemactionNG1}\\
(\tilde{n} \cdot f)(g) & = & \tau_{\gamma,\tilde{\gamma}}^{T}(\tilde{n})f(g) \qquad \mbox{for all $\tilde{n}\in \tilde{N}$} \label{eq:tildemactionNG2}\\
(\tilde{m}\cdot f)(g) & = &
\tau_{\gamma,\tilde{\gamma}}^{T}((\phi_{T}(\tilde{m}),\tilde{m}))f(\phi_{T}(\tilde{m})^{-1}g)
\qquad \mbox{for all $\tilde{m}\in
\tilde{M}_{\chi_{\tilde{\gamma}}}$.} \label{eq:tildemactionNG3}
\end{eqnarray}
\end{proposition}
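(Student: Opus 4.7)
The plan is to realize the smooth oscillator representation $\Y$ via a Schr\"odinger model adapted to the $\sl_{2}$-gradations $V=\bigoplus_{k} V_{k}$ and $\tilde{V}=\bigoplus_{k}\tilde{V}_{k}$ induced by $\gamma$ and $\tilde{\gamma}$, and then to compute the $(\tilde{U},\chi_{\tilde{\gamma}})$-coinvariants by an explicit Fourier-analytic manipulation. One decomposes $\Hom(V,\tilde{V})=\bigoplus_{i,j}\Hom(V_{i},\tilde{V}_{j})$ and observes (following the calculations in Section~\ref{subsection:embedding}) that the symplectic form $\langle\cdot,\cdot\rangle$ perfectly pairs $\Hom(V_{i},\tilde{V}_{j})$ with $\Hom(V_{-i},\tilde{V}_{-j})$. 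In particular $W_{\gamma,\tilde{\gamma}}$ is symplectic, and the positive-degree piece $L_{+}=\bigoplus_{j-i>0}\Hom(V_{i},\tilde{V}_{j})$ is a Lagrangian of its orthogonal complement. Choosing also a Lagrangian $L_{0}\subset W_{\gamma,\tilde{\gamma}}$, one realizes $\Y$ as a Schwartz space on $L_{+}\oplus L_{0}$.

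Next I would work out the action of $\tilde{U}=\exp\tilde{\u}$ on $\Y$ in this model. Since $\tilde{\u}=\bigoplus_{j\le -2}\tilde{\g}_{j}$ shifts the $\tilde{V}$-gradation down by at least two, its image in $\mathfrak{sp}(\Hom(V,\tilde{V}))$ acts on $L_{+}\oplus L_{0}$ by operators of sufficiently negative degree that the Weil representation of $\tilde{U}$ is implemented by explicit quadratic exponentials with no derivatives in the Schwartz variables. Imposing the character $\chi_{\tilde{\gamma}}$ then forces the support of the resulting coinvariant to concentrate on a subvariety of $L_{+}$ parameterizing elements of $\Hom(V,\tilde{V})$ which ``lift $\gamma$ to $\tilde{\gamma}$'' in the sense of Section~\ref{liftTriple}. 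Combining Kostant's factorization $G_{X}=M_{X}\overline{N}_{X}$ (Theorem~\ref{thm:kostant}) with Lemma~\ref{lem:oggMax}, which identifies $\Orb_{\gamma,\tilde{\gamma}}^{\Max}$ with a single $M_{X}\times\tilde{M}_{\tilde{X}}$-orbit through $T$, puts this subvariety naturally in bijection with $N\backslash G$.

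To assemble the isomorphism, I would use the homogeneous components $T_{j}\in\Hom(V_{j},\tilde{V}_{j+1})$ of $T$ to write down an explicit ``matrix-coefficient'' formula sending $v\in\Y$ to the function $g\mapsto \Psi_{T}(v)(g)\in \HH_{\gamma,\tilde{\gamma}}$. The isomorphism $J_{T}\colon \g_{-1}\oplus\tilde{\g}_{-1}\to W_{\gamma,\tilde{\gamma}}$ of Lemma~\ref{lem:sigma} together with Lemma~\ref{lemma:tauTisomorphism} identifies the fiber over each point of $N\backslash G$ with $\HH_{\gamma,\tilde{\gamma}}\cong \S_{\check{\chi}_{\gamma}}\otimes\S_{\chi_{\tilde{\gamma}}}$. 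Equivariance under $G\times \tilde{M}_{\chi_{\tilde{\gamma}}}\tilde{N}$ can then be verified directly from the explicit formula; the appearance of $\phi_{T}$ in the $\tilde{M}_{\chi_{\tilde{\gamma}}}$-intertwining is dictated by formula~\eqref{eq:combinedMXtildeMXaction}, which shows that $\tilde{m}$ and $\phi_{T}(\tilde{m})$ must act simultaneously on $\HH_{\gamma,\tilde{\gamma}}$.

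The main obstacle will be the Archimedean analytic estimates. One must verify that $\Psi_{T}(v)$ lies in the Fr\'echet space $\Sch(N\backslash G;\HH_{\gamma,\tilde{\gamma}})$ with all the seminorms of Section~\ref{frechet} finite, and conversely that every such Schwartz function arises from some $v\in\Y$. As hinted in the introduction, I would manage this by induction on the heights of the two $\sl_{2}$-gradations: peeling off the top graded piece of $\tilde{U}$ reduces the computation to a Kudla-type Jacquet module calculation for a smaller dual pair, to which the inductive hypothesis applies. Maintaining injectivity, surjectivity, and the full $\phi_{T}$-equivariance simultaneously through the induction is where most of the technical care will be needed.
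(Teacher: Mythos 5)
Your outline follows the same skeleton as the paper's argument (this is Proposition \ref{prop:tildechicoinvariants}, proved via Lemma \ref{lemma:outerlayer} and induction on the height $r$): a mixed model of $\Y$ adapted to the two gradations, a matrix-coefficient map built from the homogeneous components $T_{j}$ of $T$, identification of the fibre with $\HH_{\gamma,\tilde{\gamma}}$ via $J_{T}$ and Lemma \ref{lemma:tauTisomorphism}, and an induction for the Archimedean estimates. The gap is in the central localization step. It is not true that $\tilde{U}$ acts in your model on $L_{+}\oplus L_{0}$ purely by quadratic exponentials: for $\tilde{Z}\in\tilde{\u}$ the element $\exp\tilde{Z}$ does preserve the ``position'' Lagrangian (so no Fourier transforms appear), but it acts on the complementary variables by a nontrivial unipotent substitution in addition to multiplications; concretely, compare \eqref{eq:action4}, where the center $\tilde{\z}_{r+1}$ acts by the scalar $\psi(\Tr\tilde{Z}TT^{\ast}/2)$, with \eqref{eq:action5}, where the rest of $\tilde{\n}_{r+1}$ acts by the \emph{operator} $\omega_{(r),(r)}(-\tilde{R}T)$ on the values, i.e.\ by translations and characters in the inner variables. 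Hence ``imposing the character forces the support to concentrate'' is not a one-line consequence of equivariance: only the central directions give scalar multiplications and a genuine support condition, and the constraint from the remaining part of $\tilde{\u}_{r+1}$ can be exploited only after restricting to the open stratum and moving the base point so that the operator becomes a scalar character (this is the role of the normalization $T|_{\oplus_{k\leq r-1}V_{k}}=0$ and of \eqref{eq:sigmaaction3}). This is exactly why the paper localizes one graded layer at a time instead of treating all of $\tilde{U}$ at once; your induction is invoked only for the estimates, whereas it is needed for the algebraic identification itself.

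Even granting a support statement, two further ingredients are indispensable and absent from your sketch. First, the null cone $\{TT^{\ast}=0\}$ in $\Hom(V_{(r)},\tilde{V}_{r+1})$ contains, besides the full-rank stratum $\Hom_{GNM}(V_{(r)},\tilde{V}_{r+1})$ of \eqref{defGMN}, finitely many singular $G\times\GL(\tilde{V}_{r+1})$-orbits, and one must prove that no nonzero $(\tilde{U}_{r+1},\tilde{\chi}_{r+1})$-equivariant distribution is carried by them (statement \eqref{eq:claimvanish}); the paper does this by a Bruhat-type induction over the strata, showing the equivariance is inconsistent there because $\Im\tilde{X}$ contains $\tilde{V}_{r+1}$ while the image of $\tilde{m}^{-1}TS^{\ast}\tilde{g}$ cannot, $T$ being singular. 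Second, in the Archimedean case support concentration is strictly weaker than ``living on'' the subvariety: an equivariant distribution supported on a stratum may involve transverse derivatives, and one needs the transverse jet bundle analysis and the vanishing theorem of \cite{KV96}, together with the computation that $\tilde{N}_{r+1}$ acts trivially on the transverse jets, to reduce to distributions on the orbit itself. Finally, the identification with $N\backslash G$ does not come from Kostant's factorization: in the paper it comes from the stabilizer of $T_{r}$ in $G$ being $G_{(r-1)}N_{r}$, iterated over the layers, with the leftover variables assembling into $\HH_{\gamma,\tilde{\gamma}}$. Without these points the asserted isomorphism onto $\Sch(N\backslash G;\HH_{\gamma,\tilde{\gamma}})$ is not established.
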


Before starting with the proof of this proposition, let us show how it implies Theorem \ref{thm:maintheorem}.

\begin{proof}[Proof (of Theorem \ref{thm:maintheorem})]
By Lemma \ref{lemma:tauTisomorphism}, we have
\[
\Sch(N\backslash G;\HH_{\gamma,\tilde{\gamma}})\cong \Sch(N\backslash G; \S_{\check{\chi}_{\gamma}}\otimes \S_{\chi_{\tilde{\gamma}}})\cong \Sch(N\backslash G;\S_{\check{\chi}_{\gamma}})\otimes \S_{\chi_{\tilde{\gamma}}} .
\]
Now, given $f\in \Sch(N\backslash G;\S_{\check{\chi}_{\gamma}})$ and
$v\in\S_{\chi_{\tilde{\gamma}}}$, we identify $f\otimes v$ with an
element of $\Sch(N\backslash G;\HH_{\gamma,\tilde{\gamma}})$ via the
above isomorphism. With this identification, we may rewrite equation
(\ref{eq:tildemactionNG3}) as
\begin{eqnarray*}
\tilde{m}\cdot(f\otimes v)(g) & = &\tau_{\gamma,\tilde{\gamma}}^{T}(\phi_{T}(\tilde{m}))f(\phi_{T}(\tilde{m})^{-1}g)\otimes \tau_{\gamma,\tilde{\gamma}}^{T}(\tilde{m})v \\
& = & (\phi_{T}(\tilde{m})\cdot f)(g)\otimes \tau_{\gamma,\tilde{\gamma}}^{T}(\tilde{m})v,
\end{eqnarray*}
for all $\tilde{m}\in \tilde{M}_{\chi_{\tilde{\gamma}}}$. See
equation (\ref{eq:mactiononNG}).

By Propositions \ref{prop:mainproposition} and \ref{prop:SchwartzWhittakerModels}, we have the induced
isomorphisms:
\begin{equation}
\label{2qq1}
\Y_{(\tilde{U},\chi_{\tilde{\gamma}}),(G,\pi)}\cong \Sch(N\backslash G;\S_{\check{\chi}_{\gamma}})_{(G,\pi)}\otimes \S_{\chi_{\tilde{\gamma}}}\cong \pi\otimes W_{\gamma}(\check{\pi})\otimes \S_{\chi_{\tilde{\gamma}}},
\end{equation}
where $\tilde{M}_{\chi_{\tilde{\gamma}}}$ acts on $W_{\gamma}(\check{\pi})$ through the map $\phi_{T}:\tilde{M}_{\chi_{\tilde{\gamma}}}\twoheadrightarrow M_{\chi_{\gamma}}$. On the other hand, we have (from the definition of $\Theta(\pi)$) that
\begin{equation}
\label{2qq2}
\Y_{(G,\pi), (\tilde{U},\chi_{\tilde{\gamma}})}\cong \pi \otimes \Theta(\pi)_{(\tilde{U},\chi_{\tilde{\gamma}})}.
\end{equation}

Combining equations \eqref{2qq1} and \eqref{2qq2} and noting that
\[\Wh_{\tilde{\gamma}}(\Theta(\pi))=\Hom_{\tilde N}(\Theta(\pi),\S_{\chi_{\tilde{\gamma}}})= \Hom_{\tilde N}(\Theta(\pi)_{(\tilde{U},\chi_{\tilde{\gamma}})}, \S_{\chi_{\tilde{\gamma}}}),\]
we obtain the required isomorphism
\[
\Phi_{T}:\Wh_{\gamma}(\check{\pi})\longrightarrow \Wh_{\tilde{\gamma}}(\Theta(\pi)),
\]
such that
\[
\tilde{m}\Phi_{T}(\lambda)=\Phi_{T}(\phi_{T}(\tilde{m}\lambda)) \qquad \mbox{for all $\tilde{m}\in\tilde{M}_{\chi_{\tilde{\gamma}}}$, $\lambda\in \Wh_{\gamma}(\check{\pi})$.}
\]
\end{proof}

In the rest of this section we will closely examine the space
$\Y_{\tilde{U},\chi_{\tilde{\gamma}}}$. In fact we will prove a
refinement of Proposition \ref{prop:mainproposition} in which we
give an explicit isomorphism $\Psi_{T}$ in equation
(\ref{covariants}). This is Proposition
\ref{prop:tildechicoinvariants}. As we have already seen, our main
result follows immediately from this proposition.

\subsection{Strategy for the key proposition}

Let $\gamma=\{X,H,Y\}\subset\g$ and $\tilde{\gamma}=\{\tilde{X},\tilde{H},\tilde{Y}\}\subset\tilde{\gamma}$ be two $\sl_{2}$-triples of type $\Orb$ and $\Theta(\Orb)$, respectively. From the proof of Proposition \ref{prop:Gammatilde}, we may assume that
\begin{equation}
\label{gradingvv}
V=\bigoplus_{k=-r}^{r} V_{k}, \qquad  \mbox{and} \qquad \tilde{V}=\bigoplus_{k=-r-1}^{r+1} \tilde{V}_{k},
\end{equation}
for some $r\geq 0$.  ($r=0$ corresponds to the zero orbit.) Now fix $T_{\gamma,\tilde{\gamma}}\in \Orb^{\Max}_{\gamma,\tilde{\gamma}}$, that is, fix an injective element $T_{\gamma,\tilde{\gamma}}\in \Hom(V,\tilde{V})$, such that $T_{\gamma,\tilde{\gamma}}^{\ast}T_{\gamma,\tilde{\gamma}}=X$, $T_{\gamma,\tilde{\gamma}}T_{\gamma,\tilde{\gamma}}^{\ast}=\tilde{X}$ and $T_{\gamma,\tilde{\gamma}}(V_{k})\subset \tilde{V}_{k+1}$ for all $-r\leq k\leq r$. Then we have a decomposition
\begin{equation}
T_{\gamma,\tilde{\gamma}}=\oplus_{k=-r}^{r} T_{k},
\end{equation}
with $T_{k}\in \Hom(V_{k},\tilde{V}_{k+1})$, and $T_{k}$ is an isomorphism for $k\geq 0$. C.f. Lemma \ref{lemma:injsur}. Figuratively
\[
\begin{array}{c}   V_{-r}   \oplus    V_{-r+1}    \oplus    \cdots    \oplus    V_{r-1}   \oplus   V_{r}   \\
  \hspace{45pt} \searrow \hspace{-3pt} {\scriptstyle T_{-r}}  \hspace{10pt}  \searrow \hspace{-3pt} {\scriptstyle T_{-r+1}} \hspace{5pt}  \cdots\hspace{5pt}    \searrow \hspace{-3pt} {\scriptstyle T_{r-1}} \hspace{-5pt}   \searrow \hspace{-3pt} {\scriptstyle T_{r}}    \\
\tilde{V}_{-r-1}    \oplus     \tilde{V}_{-r}     \oplus     \tilde{V}_{-r+1}    \oplus    \cdots    \oplus    \tilde{V}_{r-1}   \oplus   \tilde{V}_{r}    \oplus   \tilde{V}_{r+1}.
\end{array}
\]

We will use the gradings of $V$ and $\tilde{V}$ associated to $H$ and $\tilde{H}$, respectively, to describe the so-called mixed model of the smooth oscillator representation $(\omega,\Y)$ associated to the dual pair $(G,\tilde{G})$. Using this model we will give a convenient description of the space of covariants $\Y_{\tilde{U},\chi_{\tilde{\gamma}}}$ leading to Proposition \ref{prop:tildechicoinvariants}. But before we outline the strategy, let us fix some notation. For $l\leq r$ and $m\leq r+1$, set
\begin{equation}
\label{vvbracket}
V_{(l)}=\bigoplus_{k=-l}^{l}V_{k}, \qquad \mbox{and} \qquad \tilde{V}_{(m)}=\bigoplus_{k=-m}^{m}\tilde{V}_{k}.
\end{equation}
Let $G_{(l)}=\{g\in G\, | \, \mbox{$g\cdot v =v$ for all $v\in V_{(l)}^{\perp}$}\}$, which is isomorphic with $G(V_{(l)})$.
Let $(\omega_{(l),(m)},\Y_{(l),(m)})$ be the smooth oscillator
representation associated to the dual pair
$(G_{(l)},\tilde{G}_{(m)})$ and the character $\psi$ of $\k$. Set
$\S_{(l),m}=\S(\Hom(V_{(l)},\tilde{V}_{m}))$, the Schwartz space of
$\Hom(V_{(l)},\tilde{V}_{m})$. Similarly, set
$\S_{l,(m)}=\S(\Hom(V_{l},\tilde{V}_{(m)}))$ and
$\S_{l,m}=\S(\Hom(V_{l},\tilde{V}_{m}))$. (Note that $\Y_{(l),(m)}$ can be identified with the
Schwartz space of a Lagrangian subspace of $\Hom(V_{(l)},\tilde{V}_{(m)})$, through the Schrodinger model.)

\begin{remark} In the current notation, our original dual pair $(G,\tilde{G})=(G_{(r)},\tilde{G}_{(r+1)})$, and the oscillator representation is
$(\omega,\Y)=(\omega_{(r),(r+1)},\Y_{(r),(r+1)})$.
\end{remark}

Now observe that
\begin{eqnarray*}
\Hom(V_{(r)},\tilde{V}_{(r+1)}) & = & \Hom(V_{(r)},\tilde{V}_{r+1})\oplus \Hom(V_{(r)},\tilde{V}_{-r-1})\oplus \Hom(V_{(r)},\tilde{V}_{(r)}) \\
&= & (\Hom(V_{(r)},\tilde{V}_{r+1})\oplus \Hom(V_{-r},\tilde{V}_{(r)}))\oplus (\Hom(V_{(r)},\tilde{V}_{-r-1}) \oplus \Hom(V_{r},\tilde{V}_{(r)}))\\
&  & {}\oplus \Hom(V_{(r-1)},\tilde{V}_{(r)})
\end{eqnarray*}
and $\Hom(V_{(r)},\tilde{V}_{r+1})\oplus \Hom(V_{-r},\tilde{V}_{(r)})$, $\Hom(V_{(r)},\tilde{V}_{-r-1}) \oplus \Hom(V_{r},\tilde{V}_{(r)})$ are totally isotropic, complementary subspaces. It then follows, from the standard theory of the oscillator representation \cite{HoUP}, that
\begin{equation}\label{eq:polarization}
\begin{aligned}
\Y_{(r),(r+1)}&\cong \S_{(r),r+1}\otimes \Y_{(r),(r)}\\
&\cong [\S_{(r),r+1}\otimes \S_{-r,(r)}]\otimes \Y_{(r-1),(r)}, \ \ \ \ r>0.
\end{aligned}
\end{equation}
What this equation is saying is that we may interpret the space
$\Y_{(r),(r+1)}$ as the space of Schwartz class functions on
$\Hom(V_{(r)},\tilde{V}_{r+1})$ with values in $\Y_{(r),(r)}$, and
the latter, in turn, can be interpreted as the space of Schwartz
class functions on $\Hom(V_{-r},\tilde{V}_{(r)})$ with values in
$\Y_{(r-1),(r)}$. More concretely, we describe this space in the
following way: given $\rho$ a seminorm on $\Y_{(r),(r)}$, $Z\in
\D(\Hom(V_{(r)},\tilde{V}_{r+1}))$ ($\D$ denotes the space of
constant-coefficient differential operators), $d\in \N$ and $f\in
C^{\infty}(\Hom(V_{(r)},\tilde{V}_{r+1});\Y_{(r),(r)})$, set
\[
q_{Z,d,\rho}(f)=\sup_{T\in \Hom(V_{(r)}\tilde{V}_{r+1})} \rho(Zf(T))(1+\|T\|)^{d},
\]
where $\|T\|$ is the operator norm of $T$. Then
\[
\S_{(r),r+1}\otimes \Y_{(r),(r)}\cong \left\{f\in C^{\infty}(\Hom(V_{(r)},\tilde{V}_{r+1});\Y_{(r),(r)})\, \left| \, \begin{array}{c} \mbox{$q_{Z,d,\rho}(f)<\infty$, for all  $d\in \N$,}\\ \mbox{$Z\in  \D(\Hom(V_{(r)},\tilde{V}_{r+1}))$,}\\ \mbox{  $\rho$ seminorm on $\Y_{(r),(r)}$}\end{array}  \right\}, \right.
\]
and likewise for $[\S_{(r),r+1}\otimes \S_{-r,(r)}]\otimes \Y_{(r-1),(r)}$. Proceeding inductively, we obtain the tensor product decomposition
\begin{equation}\label{eq:completepolarization}
\begin{aligned}
\Y_{(r),(r+1)}&\cong [\S_{(r),r+1}\otimes \S_{-r,(r)}]\otimes \cdots \otimes [\S_{(1),2}\otimes \S_{-1,(1)}]\otimes \Y_{(0),(1)}\\
&\cong [\S_{(r),r+1}\otimes \S_{-r,(r)}]\otimes \cdots \otimes [\S_{(1),2}\otimes \S_{-1,(1)}]\otimes \S_{(0),1} \otimes \Y_{(0),(0)}.
\end{aligned}
\end{equation}
Again we may interpret the space appearing in the right hand side of this equation as the space of Schwartz class functions on $\oplus _{k=r,...,1}[\Hom(V_{(k)},\tilde{V}_{k+1})\oplus \Hom(V_{-k},\tilde{V}_{(k)})]
\oplus \Hom(V_{(0)},\tilde{V}_{1})$ with values in $\Y_{(0),(0)}$. This is the mixed model of the smooth oscillator representation associated to the $\gamma$, $\tilde{\gamma}$-gradings. In the rest of this article we will make these identifications of spaces without further explanation.

For simplicity, assume for the moment that $\Orb$ and $\Theta(\Orb)$ are even orbits, that is $\g_{-1}=0$ and $\tilde{\g}_{-1}=0$.
(By Section \ref{subsection:embedding}, this is equivalent to $W_{\gamma,\tilde{\gamma}}=\bigoplus_{k=-r}^{r}
\Hom(V_{k},\tilde{V}_{k})=0$.) In this case we have that $\Y_{(0),(0)}=\C$ and we may define a continuous linear functional $\lambda_{(r)} \in \Y_{(r),(r+1)}'$ by setting
\[
\lambda_{(r)}(f)=f(T_{r},T_{-r},\ldots,T_{1},T_{-1},T_{0}),
\]
where $f\in \Y_{(r),(r+1)}\cong [\S_{(r),r+1}\otimes \S_{-r,(r)}]\otimes \cdots \otimes [\S_{(1),2}\otimes \S_{-1,(1)}]\otimes \S_{(0),1} \otimes \Y_{(0),(0)}$. As we will see later in this section, for such an $f$ we have that
\[
\lambda_{(r)}(\omega_{(r),(r+1)}(n) f)=\chi_{\gamma}^{-1}(n)\lambda_{(r)}(f) \qquad \mbox{for all $n\in U=N$,}
\]
and
\[
\lambda_{(r)}(\omega_{(r),(r+1)}(\tilde{n}) f)=\chi_{\tilde{\gamma}}(\tilde{n})\lambda_{(r)}(f) \qquad \mbox{for all $\tilde{n}\in \tilde{U}=\tilde{N}$.}
\]
Now given $f \in \Y_{(r),(r+1)}$, set
\[
f_{(r)}(g)=\lambda_{(r)}(\omega_{(r),(r+1)}(g) f),
\]
for $g\in G$. It is then immediate that $f_{(r)}\in
C^{\infty}(N\backslash G; \HH_{\gamma,\tilde{\gamma}})$ but we will
show that actually $f_{(r)}\in \Sch(N\backslash G;
\HH_{\gamma,\tilde{\gamma}})$. (Here we note that
$\HH_{\gamma,\tilde{\gamma}}$ is $1$-dimensional since
$W_{\gamma,\tilde{\gamma}}=0$.) On the other hand, since $M_{\chi}$
and $\tilde{M}_{\chi_{\tilde{\gamma}}}$ preserve the gradings on $V$
and $\tilde{V}$, respectively, it is straightforward to check in
this case that
\[
(\omega_{(r),(r+1)}(\tilde{m})f)_{(r)}(g)=f_{(r)}(\phi_{T}(\tilde{m})^{-1}g), \mbox{for all $f\in \Y_{(r),(r+1)}$, $\tilde{m}\in \tilde{M}_{\chi_{\tilde{\gamma}}}$, $g\in G$}.
\]
It then follows that the map $f\mapsto f_{(r)}$ induces a
$G\times \tilde{M}_{\chi_{\tilde{\gamma}}}\tilde{N}$-intertwining
map
\[
\Psi_{T}:
(\Y_{(r),(r+1)})_{\tilde{U},\chi_{\tilde{\gamma}}}\longrightarrow
\Sch(N\backslash G;\HH_{\gamma,\tilde{\gamma}}),
\]
that we will show to be an isomorphism.

To describe the map $\Psi_{T}$ in the general case, observe that if $(\tau_{\gamma,\tilde{\gamma}},\HH_{\gamma,\tilde{\gamma}})$ is the smooth Heisenberg representation associated to $W_{\gamma,\tilde{\gamma}}$ (as in Section \ref{Heisenberg}), then, following arguments similar to those leading to equation (\ref{eq:completepolarization}), we obtain the following tensor product decomposition:
\begin{equation}
\label{Schitchi}
\HH_{\gamma,\tilde{\gamma}} \cong \S_{-r,-r}\otimes
\S_{-r+1,-r+1}\cdots\otimes \S_{-1,-1}\otimes \Y_{(0),(0)}.
\end{equation}
Again we may identify the space $\HH_{\gamma,\tilde{\gamma}}$ with
the space of Schwartz class functions on
$\Hom(V_{-r},\tilde{V}_{-r})\oplus \cdots \oplus
\Hom(V_{-1},\tilde{V}_{-1})$ with values in $\Y_{(0),(0)}$. Now,
given $f\in \Y_{(r),(r+1)}$, set
\begin{equation}
\label{def:fr0}
f_{(r)}(g)(S_{-r},\ldots,S_{-1})=(\omega_{(r),(r+1)}(g) f)(T_{r},T_{-r}+S_{-r},\ldots,T_{1},T_{-1}+S_{-1},T_{0}),
\end{equation}
for all $g\in G$, $S_{-k}\in \Hom(V_{-k},\tilde{V}_{-k})$, $k=1,\ldots,r$.  Note that when $\Orb$ and $\Theta(\Orb)$ are both even this definition agrees with the one given before.

The following result is a refinement of Proposition \ref{prop:mainproposition}.

\begin{proposition}\label{prop:tildechicoinvariants}
 Let $\gamma\subset \g$ and $\tilde{\gamma}\subset \tilde{\g}$ be two $\sl_{2}$-triples of type $\Orb$ and $\Theta(\Orb)$, respectively. Fix $T\in \Orb_{\gamma,\tilde{\gamma}}^{\Max}$. Given $f\in \Y_{(r),(r+1)}$, define $f_{(r)}\in
C^{\infty}(G;\HH_{\gamma,\tilde{\gamma}})$ as in equation (\ref{def:fr0}). Then the map $f\mapsto f_{(r)}$ induces a $G\times \tilde{M}_{\chi_{\tilde{\gamma}}}\tilde{N}$-intertwining isomorphism
\[
\Psi_{T}:(\Y_{(r),(r+1)})_{\tilde{U},\chi_{\tilde{\gamma}}}\longrightarrow
\Sch(N\backslash G;\HH_{\gamma, \tilde{\gamma}}).
\]
where $G\times \tilde{M}_{\chi_{\tilde{\gamma}}}\tilde{N}$ acts on
$\Sch(N\backslash G;\HH_{\gamma, \tilde{\gamma}})$ via equations
(\ref{eq:tildemactionNG1})--(\ref{eq:tildemactionNG3}).
\end{proposition}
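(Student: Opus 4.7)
The plan is to verify the transformation properties of the map $f\mapsto f_{(r)}$ by direct computation in the mixed model, and then establish that $\Psi_T$ is an isomorphism by an induction on the height $r$ of the gradings, combined with the type of rapid-decay estimates developed in Section \ref{frechet}.

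First I would check the equivariance. Using the polarization leading to \eqref{eq:completepolarization}, the action of $U$ and $\g_{-1}$ on $\Y_{(r),(r+1)}$ consists of central translations (involving $\psi(\kappa(X,\cdot))$) and geometric translations on the $V_{-k}$-factors. Evaluating at the distinguished point $(T_r, T_{-r}, \ldots, T_0)$, and using $T^*T=X$ together with the identity $\kappa_{-1}(R,S)=\kappa(X,[R,S])$, one checks that the $U$-action produces the character $\chi_{\check{\gamma}}=\check{\chi}_\gamma$ while the $\g_{-1}$-translations are pushed onto the Schwartz variables $S_{-1},\ldots,S_{-r}$ exactly as the Heisenberg action in $\HH_{\gamma,\tilde{\gamma}}$ coming from the embedding $J_T$ of Lemma \ref{lem:sigma}. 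A symmetric computation with $TT^*=\tilde{X}$ shows $f_{(r)}$ transforms by $\chi_{\tilde{\gamma}}$ under $\tilde{U}$, so that the map factors through $\tilde{U}$-coinvariants. The $M_{\chi_\gamma}$ and $\tilde{M}_{\chi_{\tilde{\gamma}}}$-equivariance follow from $\tilde{m}T=T\phi_T(\tilde{m})$ (equation \eqref{defphi}) together with the functorial property of the Weil representation for the action described in \eqref{eq:combinedMXtildeMXaction}.

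Second, I would prove that $f_{(r)}$ actually lies in $\Sch(N\backslash G;\HH_{\gamma,\tilde{\gamma}})$. Using the Cartan-like decomposition $G=KAK$ from Section \ref{norms}, together with the fact that $T$ is of full rank and that the $A$-action dilates the coordinates $T_k$ by characters of $A$, one sees that large $g\in G$ push the evaluation point $(T_r,\ldots,T_0)$ to large arguments, where the Schwartz behavior of $f$ forces rapid decay. Making this precise will require rerunning the norm inequalities of Lemmas \ref{lemma:ntildeninequality} and Corollary \ref{cor:nmktildeninequality} with seminorms on $\HH_{\gamma,\tilde{\gamma}}$ in place of the ones for $\S_{\chi_\gamma}$; the structure is parallel to Lemma \ref{lemma:equalityss} and Lemma \ref{lemma:smFrobenius}.

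Third, the isomorphism is proved by induction on $r$. The base case $r=0$ reduces, in view of \eqref{eq:polarization} and \eqref{Schitchi}, to showing that the evaluation-at-$T_0$ map from $\S_{(0),1}\otimes\Y_{(0),(0)}$ onto $\Sch(N\backslash G(V_0);\Y_{(0),(0)})$ is onto the appropriate coinvariant quotient; this is essentially a Frobenius-type statement, and in the stable-range shadow it follows from the standard analysis of Schr\"odinger models. For the inductive step, peeling off the outermost factor $\S_{(r),r+1}\otimes \S_{-r,(r)}$ from the left side of \eqref{eq:completepolarization} and the outermost factor $\S_{-r,-r}$ from the right side of \eqref{Schitchi}, one reduces the statement for the pair $(G_{(r)},\tilde{G}_{(r+1)})$ to that for $(G_{(r-1)},\tilde{G}_{(r)})$. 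The compatibility is ensured by the decomposition $J_T=\oplus J_{T,k}$ with $J_{T,k}$ mapping $\g_{-1}\cap\End_{-1}(V_{(k)})$ and the corresponding $\tilde{\g}$-piece to $\Hom(V_{-k},\tilde{V}_{-k})$.

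The main obstacle will be the surjectivity combined with the Schwartz-estimate matching in the inductive step: starting from an arbitrary $F\in\Sch(N\backslash G;\HH_{\gamma,\tilde{\gamma}})$, one must construct a preimage $f\in\Y_{(r),(r+1)}$ whose $(r)$-evaluation recovers $F$ modulo $(\tilde{U},\chi_{\tilde{\gamma}})$. The construction proceeds by ``un-evaluating'' at $(T_r,T_{-r})$ using a bump function supported near these points in $\Hom(V_{(r)},\tilde{V}_{r+1})\oplus\Hom(V_{-r},\tilde{V}_{(r)})$, producing a function of Schwartz class on the outer factor by the injectivity/full-rank of $T_r$ and the isomorphism $T_{-r-1}^*\colon\tilde{V}_{-r}\to V_{-r+1}$ (Lemma \ref{lemma:injsur}); matching the rapid-decay seminorms in the induction step is the technical heart of the argument, relying on norm equivalences analogous to \eqref{eq:equivalenceofnorms}. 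Once this is in hand, injectivity modulo $\tilde{U}$-coinvariants follows from the explicit cokernel description inherent in the Stone--von Neumann realization, completing the induction.
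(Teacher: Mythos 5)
There is a genuine gap, and it sits exactly at the point you defer to a vague principle: the injectivity of the induced map on the coinvariant quotient $(\Y_{(r),(r+1)})_{\tilde{U},\chi_{\tilde{\gamma}}}$. Your equivariance computations and the Schwartz-estimate matching are the routine parts; the heart of the matter is to show that \emph{every} $(\tilde{U},\chi_{\tilde{\gamma}})$-equivariant $\Y_{(r),(r)}'$-valued distribution on $\Hom(V_{(r)},\tilde{V}_{r+1})$ lives on the locus of \emph{full-rank} null mappings, so that the quotient map factors through functions on the single $G$-orbit of $T_r$ (and similarly, in the $S$-variable, through the affine slice $T_{-r}+\Hom(V_{-r},\tilde{V}_{-r})$). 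Saying that ``injectivity modulo $\tilde{U}$-coinvariants follows from the explicit cokernel description inherent in the Stone--von Neumann realization'' is not an argument: in the Archimedean case one must rule out equivariant distributions (a priori of infinite order, and transversally supported) concentrated on the singular, lower-rank strata of $\{T:TT^{\ast}=0\}$. The paper does this by a support localization coming from the central character of $\tilde{\z}_{r+1}$ (equivariance forces $\supp\lambda\subseteq\{TT^{\ast}=0\}$, resp.\ $\{TT^{\ast}=\tilde{X}\}$ when $r=0$), followed by a Bruhat-type orbit-by-orbit vanishing theorem: a partition of unity reduces to finite order, the transverse jet bundle machinery of Kolk--Varadarajan applies because the $\tilde{N}_{r+1}$-action on transverse jets is trivial, and on each singular orbit the equivariance condition $\tilde{R}\tilde{m}^{-1}TS^{\ast}\tilde{g}=\tilde{R}\tilde{X}$ is unsatisfiable since $\Im\tilde{X}\supseteq\tilde{V}_{r+1}$ while singular $T$ cannot produce this. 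Only after this vanishing does the regular-value/change-of-variables argument identify the coinvariants with Schwartz functions on the open orbit, which is what makes $\Psi_T$ injective. Your proposal contains no substitute for this step, and a bump-function construction of preimages only addresses surjectivity, which is not where the difficulty lies.

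A secondary omission: in the inductive step you assert that compatibility ``is ensured by the decomposition $J_T=\oplus J_{T,k}$,'' but what is actually required is the identity $(\bar{\sigma}_{T_{r}}(n)f)_{(r-1)}(e)=\tau_{T_{(r)}}(n)f_{(r-1)}(e)$ for $n\in N_{r}$, i.e.\ a pointwise check that the geometric action of $U_r$ and of $\g_{-1}\cap\n_{r}$ on the outer Schwartz factor reproduces the character $\chi_{\gamma}^{-1}$ and the Heisenberg action $\tau^{T}_{\gamma,\tilde{\gamma}}$ on $\HH_{\gamma,\tilde{\gamma}}$; without this the induction-by-stages isomorphism $\Sch(N\backslash G;\HH_{\gamma,\tilde{\gamma}})\cong \Sch(N_{r}G_{(r-1)}\backslash G;\Sch(N\backslash G_{(r-1)}N_{r};\HH_{\gamma,\tilde{\gamma}}))$ cannot be invoked. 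This check is computational and your framework can accommodate it, but as written it is missing.
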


As we have already shown, the existence of such a $\Psi_{T}$ is enough to prove Theorem \ref{thm:maintheorem}.
 Before proceeding to the proof of this key proposition, 
 we introduce all the remaining notation.

Recall that we have set
 $G_{(l)}=\{g\in G\, | \, \mbox{$g\cdot v =v$ for all $v\in V_{(l)}^{\perp}$}\}.$ Let $P_{l}$ be the stabilizer of $V_{-l}$
in $G_{(l)}$. Then $P_{l}=M_{(l)}N_{l}$, where $N_{l}$ is the
unipotent radical of $P_{l}$, $M_{(l)}= M_{l}\times G_{(l-1)}$, and $M_{l}\cong \GL(V_{-l})$. Now observe that, if $S\in \Hom(V_{(l-1)},V_{-l})$, then $S-S^{\ast}\in \n_{l}$. Hence, we can use the map $S\mapsto S-S^{\ast}$ to define a Lie algebra isomorphism
\begin{equation}
\Hom(V_{(l-1)},V_{-l})\oplus \z_{l} \cong \n_{l}, \label{eq:nlisomorphism}
\end{equation}
where the Lie algebra structure on the left hand side is given as follows: $\z_{l}=\{Z\in \Hom(V_{l},V_{-l}) \, | \, Z^{\ast}=-Z\}$ is central, and $[T,S]=ST^{\ast}-TS^{\ast}$ for all $T$, $S\in
\Hom(V_{(l-1)},V_{-l})$.  In what follows we will make implicit use of this isomorphism to identify these two
spaces.

In a similar fashion, we define $\tilde{G}_{(m)}\cong
G(\tilde{V}_{(m)})$, and observe that if we set $\tilde{P}_{m}$ to be the
stabilizer of $\tilde{V}_{m}$ in $\tilde{G}_{(m)}$, then
$\tilde{P}_{m}=\tilde{M}_{(m)}\tilde{N}_{m}$, where $\tilde{N}_{m}$ is the
unipotent radical of $\tilde{P}_{m}$, $\tilde{M}_{(m)}=\tilde{M}_{m}\times \tilde{G}_{(m-1)}$, and
$\tilde{M}_{m}\cong \GL (\tilde{V}_{m})$. Similarly to (\ref{eq:nlisomorphism}), we use the map $\tilde{S}\mapsto \tilde{S}-\tilde{S}^{\ast}$ from $\Hom(\tilde{V}_{m},\tilde{V}_{(m-1)})$ to $\tilde{\n}_{m}$ to define a Lie algebra isomorphism
\begin{equation}
\Hom(\tilde{V}_{m},\tilde{V}_{(m-1)})\oplus \tilde{\z}_{m}\cong \tilde{\n}_{m}, \label{eq:tildenmisomorphism}
\end{equation}
where, similarly, the Lie algebra structure on the left hand side is given as follows: $\tilde{\z}_{m} = \{\tilde{Z}\in \Hom(\tilde{V}_{m},\tilde{V}_{-m}) \, | \, \tilde{Z}^{\ast}=-\tilde{Z}\}$
is central, and $[\tilde{T},\tilde{S}]=\tilde{S}^{\ast}\tilde{T}-\tilde{T}^{\ast}\tilde{S}$ for all $\tilde{T}$, $\tilde{S}\in \Hom(\tilde{V}_{m},\tilde{V}_{(m-1)})$.
We will also set
\begin{equation}
\label{nlul}
N_{(l)}=N\cap G_{(l)}, \qquad U_{l}=U\cap N_{l}, \qquad \mbox{and} \qquad U_{(l)}=U\cap N_{(l)},
\end{equation}
with similar definitions for $\tilde{N}_{(m)}$, $\tilde{U}_{m}$ and
$\tilde{U}_{(m)}$. Let $\chi=\chi_{\gamma}$ be the character of $U$
associated to the $\sl_{2}$-triple $\gamma$, and let
$\chi_{l}=\chi|_{U_{l}}$, $\chi_{(l)}=\chi|_{U_{(l)}}$. Similarly we
define $\tilde{\chi}=\chi_{\tilde{\gamma}}$, $\tilde{\chi}_{m}$ and
$\tilde{\chi}_{(m)}$. Finally, define $X_{(l)}\in \End(V_{(l)})$ in
the following way: for $k<l-1$ we define
$X_{(l)}|_{V_{k}}=X|_{V_{k}}$ and for $l-1\leq k \leq l$ set
$X_{(l)}|_{V_{k}}=0$. Then it is clear that $X_{(l)}\in
\g_{(l)}:=\g(V_{(l)})$ is a nilpotent element. Let
$H_{(l)}=H|_{V_{(l)}}$. Then $H_{(l)}\in \g_{(l)}$
is semisimple, and $[H_{(l)},X_{(l)}]=2X_{(l)}$. Therefore, there
exists $Y_{(l)}\in \g_{(l)}$ such that
$\gamma_{(l)}=\{X_{(l)},H_{(l)},Y_{(l)}\}$ is an $\sl_{2}$-triple.
Observe that the parabolic subgroup associated to this
$\sl_{2}$-triple is precisely $P_{\gamma_{(l)}}=P_{\gamma}\cap
G_{(l)}=M_{(l)}N_{(l)}$, where $M_{(l)}=M\cap G_{(l)}$. Furthermore,
observe that $\chi_{\gamma_{(l)}}=\chi_{(l)}$, and hence
$M_{\chi_{\gamma_{(l)}}}=M_{\chi_{(l)}}$. We make analogous
definitions for $\tilde{X}_{(l)}$, $\tilde{\gamma}_{(l)}$, etc.

\vsp
We recall the following explicit formulas for $\omega|_{G\times \tilde{P}_{r+1}}=\omega_{(r),(r+1)}|_{G\times \tilde{P}_{r+1}}$ \cite{HoUP}: for $f\in \Y_{(r),(r+1)}\cong \S_{(r),r+1}\otimes \Y_{(r),(r)}$ and $T\in \Hom(V_{(r)},\tilde{V}_{r+1})$,
\begin{eqnarray}
(\omega_{(r),(r+1)}(g)f)(T) & = & \omega_{(r),(r)}(g)[f(Tg)] \qquad \mbox{for all $g\in G=G(V_{(r)})$},\label{eq:action1}\\
(\omega_{(r),(r+1)}(\tilde{m})f)(T) & = & \nu (\tilde{m})[f(\tilde{m}^{-1}T)] \qquad \mbox{for all $\tilde{m}\in\tilde{M}_{r+1}\cong \GL(\tilde{V}_{r+1})$}, \label{eq:action2} \\
(\omega_{(r),(r+1)}(\tilde{g})f)(T) & = & \omega_{(r),(r)}(\tilde{g})[f(T)] \qquad \mbox{for all $\tilde{g}\in \tilde{G}_{(r)}\cong G(\tilde{V}_{(r)})$}, \label{eq:action3} \\
(\omega_{(r),(r+1)}(\exp \tilde{Z})f)(T) & = & \psi(\Tr \tilde{Z}TT^{\ast}/2)[f(T)] \qquad \mbox{for all $\tilde{Z}\in \tilde{\z}_{r+1}$}, \label{eq:action4} \\
(\omega_{(r),(r+1)}(\exp \tilde{R})f)(T) & = & \omega_{(r),(r)}(-\tilde{R}T)[f(T)] \qquad \mbox{for all $\tilde{R}\in \Hom(\tilde{V}_{r+1},\tilde{V}_{(r)})$}. \label{eq:action5}
\end{eqnarray}
Here $\nu $ is a certain character whose explicit form will not concern us. In the last equation we implicitly understand that $\Y_{(r),(r)}$ carries an action of the Heisenberg group associated to $\Hom(V_{(r)},\tilde{V}_{(r)})$. We also observe the following: since $\tilde{V}_{r+1}\subset \tilde{V}_{(r+1)}$ is stable under the action of $\tilde{P}_{r+1}$, we may consider $\Hom(V_{(r)},\tilde{V}_{r+1})$ as a $G\times \tilde{P}_{r+1}$-module with trivial $\tilde{G}_{(r)}\times \tilde{N}_{r+1}$ ($\subset \tilde{P}_{r+1}$) action. From equations (\ref{eq:action1})--(\ref{eq:action5}), it is then clear that the action $\omega_{(r),(r+1)}|_{G\times \tilde{P}_{r+1}}$ may be interpreted as coming from the usual vector bundle action of $G\times \tilde{P}_{r+1}$ on the trivial vector bundle with base $\Hom(V_{(r)},\tilde{V}_{r+1})$ and the fiber $\Y_{(r),(r)}$. (This action depends on $T$.) To be more precise, we may rewrite equations (\ref{eq:action1})--(\ref{eq:action5}) in terms of an action $\sigma_{T}$ of $G\times \tilde{P}_{r+1}$ on $\Y_{(r),(r)}$, as follows:
\begin{eqnarray}
(\omega_{(r),(r+1)}(g)f)(T)& = &\sigma_{T}(g)f(Tg) \qquad \mbox{for all $g\in G$} \label{eq:vectorbundleaction1}\\
(\omega_{(r),(r+1)}(\tilde{p})f)(T) & = & \sigma_{T}(\tilde{p})f(\tilde{p}^{-1}T) \qquad \mbox{for all $\tilde{p}\in \tilde{P}_{r+1}$,} \label{eq:vectorbundleaction2}
\end{eqnarray}
where $\sigma_{T}(g)=\omega_{(r),(r)}(g)$ for all $g\in G$, and
\[
\sigma_{T}(\tilde{p})=\psi(\Tr \tilde{Z}TT^{\ast}/2)\nu(\tilde{m})\omega_{(r),(r)}(\tilde{g})\omega_{(r),(r)}(\tilde{R}T),
\]
for $\tilde{p}=\tilde{m}\tilde{g}(\exp \tilde{R})(\exp \tilde{Z})$, with $\tilde{m}\in \tilde{M}_{r+1}$, $\tilde{g}\in \tilde{G}_{(r)}$, $\tilde{R}\in \Hom(\tilde{V}_{r+1},\tilde{V}_{(r)})$, $\tilde{Z}\in \tilde{\z}_{r+1}$.

\subsection{The inductive step: relating covariants of $\Y_{(r),(r+1)}$ with $\Y_{(r-1),(r)}$}
\label{subsection:induction}

Given a function $f\in \Y_{(r),(r+1)}\cong [\S_{(r),r+1}\otimes
\S_{-r,(r)}]\otimes \Y_{(r-1),(r)}$, define a new function $f_{r}\in
C^{\infty}(G;\S_{-r,-r}\otimes \Y_{(r-1),(r)})$ by
\begin{equation}
\label{def-fr}
f_{r}(g)(S)=[\omega_{(r),(r+1)}(g)f](T_{r},T_{-r}+S),\qquad \mbox{for all $g\in G$, $S\in\Hom(V_{-r},\tilde{V}_{-r})$},
\end{equation}
where $T_{\gamma,\tilde{\gamma}}=\sum_{k=-r}^{r}T_{k}$ is as before. Then, from equations (\ref{eq:action4}) and (\ref{eq:action5}), we have that for all $f\in \Y_{(r),(r+1)}$ $g\in G$, $S\in \Hom(V_{-r},\tilde{V}_{-r})$,
\begin{eqnarray}
(\omega_{(r),(r+1)}(\exp \tilde{Z})f)_{r}(g)(S) & = & (\omega_{(r),(r+1)}(\exp \tilde{Z})[\omega_{(r),(r+1)}(g)f])(T_{r},T_{-r}+S) \nonumber \\
& = & \psi(\Tr \tilde{Z}T_{r}T_{r}^{\ast}/2)(\omega_{(r),(r+1)}(g)f)(T_{r},T_{-r}+S) \nonumber \\
& = & f_{r}(g)(S), \label{f(r)center}
\end{eqnarray}
for all $\tilde{Z}\in \tilde{\z}_{r+1}$, and
\begin{eqnarray}
(\omega_{(r),(r+1)}(\exp \tilde{R})f)_{r}(g)(S) & = & (\omega_{(r),(r+1)}(\exp \tilde{R})[\omega_{(r),(r+1)}(g)f])(T_{r},T_{-r}+S) \nonumber \\
& = & (\omega_{(r),(r)}(-\tilde{R}T_{r})[\omega_{(r),(r+1)}(g)f](T_{r}))(T_{-r}+S) \nonumber \\
& = & \psi(\Tr ((T_{-r}+S)^{\ast}\tilde{R}T_{r}))[\omega_{(r),(r+1)}(g)f](T_{r},T_{-r}+S) \nonumber \\
& = & \psi(\Tr (T_{-r}^{\ast}\tilde{R}T_{r}))[\omega_{(r),(r+1)}(g)f](T_{r},T_{-r}+S) \nonumber \\
& = & \chi_{\tilde{\gamma}}(\exp \tilde{R})f_{r}(g)(S), \label{f(r)character}
\end{eqnarray}
for all $\tilde{R}\in \Hom(\tilde{V}_{r+1},\tilde{V}_{(r-1)}\oplus \tilde{V}_{-r})\subset \tilde{\u}_{r+1}$, in other words, for all $\tilde{u}\in \tilde{U}_{r+1}$,
\[
(\omega_{(r),(r+1)}(\tilde{u})f)_{r}(g)(S)=\chi_{\tilde{\gamma}}(\tilde{u})f_{r}(g)(S).
\]
 It is then clear that understanding the map $f\mapsto f_{r}$ is an important first step towards understanding the more complicated map $f\mapsto f_{(r)}$.

We start by observing that $G_{(r-1)}N_{r}$ is the stabilizer of $T_{r}$ in $G$. Hence, according to equation (\ref{eq:vectorbundleaction1}) we obtain a representation $(\sigma_{T_{r}},\Y_{(r),(r)})$ of $G_{(r-1)}N_{r}$, where we are viewing $\Y_{(r),(r)}$ as the fiber over the point $T_{r}\in \Hom(V_{(r)},\tilde{V}_{r+1})$. Now, since by definition $\sigma_{T_{r}}(g)\varphi=\omega_{(r),(r)}(g)\varphi$ for all $\varphi\in \Y_{(r),(r)}$, we may use the formulas given in \cite{HoUP} to describe this action explicitly: given $\varphi\in \Y_{(r),(r)}$, $S\in \Hom(V_{-r},\tilde{V}_{(r)})$,
\begin{eqnarray}
(\sigma_{T_{r}}(\exp Z)\varphi)(S)& = & \psi(\Tr S^{\ast}SZ/2)\varphi(S), \qquad \mbox{for all $Z\in \z_{r}$,} \label{eq:simgaTr1}\\
(\sigma_{T_{r}}(\exp R)\varphi)(S)& =&\omega_{(r-1),(r)}(SR)[\varphi(S)], \qquad \mbox{for all $R\in \Hom(V_{(r-1)},V_{-r})$,} \label{eq:simgaTr2} \\
(\sigma_{T_{r}}(g)\varphi)(S)& =&\omega_{(r-1),(r)}(g)[\varphi(S)], \qquad \mbox{for all $g\in G_{(r-1)}$}. \label{eq:simgaTr3}
\end{eqnarray}

Now, given $\varphi \in \Y_{(r),(r)}$, set $\bar{\varphi}(S)=\varphi(T_{-r}+S)$ for all $S\in \Hom(V_{-r},\tilde{V}_{-r})$. It is clear that the map $\varphi\mapsto \bar{\varphi}$ defines a surjection
\[
\bar{\,}: \ \ \Y_{(r),(r)}\twoheadrightarrow \S_{-r,-r}\otimes \Y_{(r-1),(r)}.
\]
 Furthermore, using equations (\ref{eq:simgaTr1})--(\ref{eq:simgaTr3}), we may thus define a representation $(\bar{\sigma}_{T_{r}},\S_{-r,-r}\otimes \Y_{(r-1),(r)})$ of $G_{(r-1)}N_{r}$ explicitly by the following formulas: given $\varphi\in \S_{-r,-r}\otimes \Y_{(r-1),(r)}$, $S\in \Hom(V_{-r},\tilde{V}_{-r})$,
\begin{eqnarray}
(\bar{\sigma}_{T_{r}}(\exp Z)\varphi)(S)& = & \psi(\Tr(T_{-r}+ S)^{\ast}(T_{-r}+S)Z/2)\varphi(S)=\varphi(S), \qquad \mbox{for $Z\in \z_{r}$,} \label{eq:barsimgaTr1}\\
(\bar{\sigma}_{T_{r}}(\exp R)\varphi)(S)& =&\omega_{(r-1),(r)}((T_{-r}+S)R)[\varphi(S)], \qquad \mbox{for $R\in \Hom(V_{(r-1)},V_{-r}),$} \label{eq:barsimgaTr2} \\
(\bar{\sigma}_{T_{r}}(g)\varphi)(S)& =&\omega_{(r-1),(r)}(g)[\varphi(S)], \qquad \mbox{for $g\in G_{(r-1)}$}. \label{eq:barsimgaTr3}
\end{eqnarray}

Observe that if $f\in \Y_{(r),(r+1)}$, then $f_{r}(g)=\overline{[\omega_{(r),(r+1)}(g)f](T_{r})}$. It follows immediately that
\[
f_{r}\in C^{\infty}(G_{(r-1)}N_{r}\backslash G;\S_{-r,-r}\otimes \Y_{(r-1),(r)}). 
\]
Now, given $\rho$ a seminorm in
$\Y_{(r-1),(r)}$, $Z_{1} \in \D(\Hom(V_{-r},\tilde{V}_{-r}))$, $Z_{2}\in
U(\g)$, $d_{1}$, $d_{2}\in \N$ and $f\in
C^{\infty}(G;\S_{-r,-r}\otimes \Y_{(r-1),(r)})$, let
\begin{equation}\label{eq:definitionseminorm}
q_{Z_{1},Z_{2},d_{1},d_{2},\rho}(f)=\sup_{\begin{array}{c} \mbox{\scriptsize $k\in K$, $m\in M_{r}$} \\ \ \mbox{\scriptsize $S\in \Hom(V_{-r},\tilde{V}_{-r})$}\end{array}} \rho(Z_{1}[Z_{2}f(mk)](S)) \|m\|^{d_{1}}(1+\|S\|)^{d_{2}},
\end{equation}
where $K\subset G$ is a maximal compact subgroup as in Section
\ref{norms}. Set
\begin{eqnarray*}
\lefteqn{\Sch(N_{r} G_{(r-1)}\backslash G;\S_{-r,-r}\otimes \Y_{(r-1),(r)})=} \\
& & \set{f\in C^{\infty}(N_{r} G_{(r-1)}\backslash G;\S_{-r,-r}\otimes \Y_{(r-1),(r)})}{\begin{array}{c}\mbox{$q_{Z_{1},Z_{2},d_{1},d_{2},\rho}(f) <\infty$, for all}\\ \mbox{   $q_{Z_{1},Z_{2},d_{1},d_{2},\rho}$ as in (\ref{eq:definitionseminorm})} 
\end{array}}.
\end{eqnarray*}

\begin{lemma}\label{lemma:outerlayer}

For $r>0$, the map $f\mapsto f_{r}$ induces a $G$-intertwining isomorphism
\[
\Psi_{r}:(\Y_{(r),(r+1)})_{\tilde{U}_{r+1},\tilde{\chi}_{r+1}} \longrightarrow
\Sch(N_{r} G_{(r-1)}\backslash G;\S_{-r,-r}\otimes\Y_{(r-1),(r)}),
\]
where $N_{r} G_{(r-1)}$ acts on $\S_{-r,-r}\otimes\Y_{(r-1),(r)}$ by the representation $\bar{\sigma}_{T_{r}}$, given in equations (\ref{eq:barsimgaTr1})--(\ref{eq:barsimgaTr3}).
\end{lemma}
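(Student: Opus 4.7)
The plan is to verify the lemma in three stages. First, I would verify that $\Psi_r$ is well-defined and equivariant. The stabilizer of $T_r \in \Hom(V_{(r)}, \tilde{V}_{r+1})$ (extended by zero on $V_{(r-1)} \oplus V_{-r}$) under the right action of $G$ coincides with $N_r G_{(r-1)}$: elements of $N_r$ preserve $V_{-r}$ and act as the identity modulo $\ker T_r = V_{(r-1)} \oplus V_{-r}$, while $G_{(r-1)}$ fixes $V_r$ pointwise. Combined with formula (\ref{eq:vectorbundleaction1}), this yields $f_r(g'g) = \bar{\sigma}_{T_r}(g') f_r(g)$ for $g' \in N_r G_{(r-1)}$, $g \in G$, so $f_r$ descends to a section on $N_r G_{(r-1)} \backslash G$; matching (\ref{eq:simgaTr1})--(\ref{eq:simgaTr3}) against (\ref{eq:barsimgaTr1})--(\ref{eq:barsimgaTr3}) is immediate upon restriction to the shifted slice $T_{-r} + \Hom(V_{-r}, \tilde{V}_{-r})$. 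The right-$G$-equivariance is manifest, and the $(\tilde{U}_{r+1}, \tilde{\chi}_{r+1})$-equivariance is precisely (\ref{f(r)center}) together with (\ref{f(r)character}), so $\Psi_r$ descends to the claimed coinvariants.

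Second, I would establish the Schwartz estimate. By Cartan decomposition applied to $M_r$, it suffices to control $f_r(m)(S)$ for $m$ in a maximal split torus of $M_r \cong \GL(V_{-r})$, the $K$-directions contributing boundedly. Using (\ref{eq:action1})--(\ref{eq:action5}), a direct computation gives $f_r(m)(S) = |\det m|^{c}\, f\bigl(T_r \circ (m^*)^{-1},\, (T_{-r}+S) \circ m\bigr)$ up to a scalar Jacobian. Since $T_r: V_r \xrightarrow{\sim} \tilde{V}_{r+1}$ and $T_{-r}: V_{-r} \hookrightarrow \tilde{V}_{-r+1}$ are nonzero injective maps by Lemma \ref{lemma:injsur}, as $\|m\|$ grows at least one of the two arguments of $f$ must escape to infinity: the first grows with $\|(m^*)^{-1}\|$ on $V_r$ while the second grows with $\|m\|$ on $V_{-r}$. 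Schwartz decay of $f$ in its Lagrangian arguments then yields $\|m\|^{-N}$ decay for every $N$, dominating both the weights $\|m\|^{d_1}$ and $(1+\|S\|)^{d_2}$ in (\ref{eq:definitionseminorm}); applying derivatives $Z_1 \in \D(\Hom(V_{-r}, \tilde{V}_{-r}))$ and $Z_2 \in U(\g)$ preserves the Schwartz class, so the full seminorm bounds hold.

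Third, I would establish bijectivity on the coinvariants. For surjectivity, given $h$ in the target, construct a lift supported near the embedded orbit $T_r \cdot G \subset \Hom(V_{(r)}, \tilde{V}_{r+1})$: choose a smooth transverse cutoff $\phi$ and define $f$ to vanish outside a tubular neighborhood and to satisfy $f(T_r \cdot g + \nu,\, T_{-r} + S + \eta,\, s) = \phi(\nu, \eta) \cdot [\sigma_{T_r}(g)^{-1} h(g)](S)$ on the transverse slice, with rapidly decaying extension in $(\nu, \eta)$; one checks $f_r = h$ directly, and the Schwartz class of $f$ follows from that of $h$ and smoothness of the $G$-action. For injectivity on the coinvariants, one shows that if $f_r = 0$ then $f$ lies in the span of $\omega_{(r),(r+1)}(\tilde{u})f' - \tilde{\chi}_{r+1}(\tilde{u})f'$ for $\tilde{u} \in \tilde{U}_{r+1}$ and $f' \in \Y_{(r),(r+1)}$. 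This is Fourier-analytic: the $\tilde{\z}_{r+1}$-character constraint from (\ref{eq:action4}) pins down the effective $T$-support to the locus $\{T : TT^* - \tilde{X} \perp \tilde{\z}_{r+1}\}$, and the $\tilde{R}$-shifts from (\ref{eq:action5}) translate this locus onto the orbit through $T_r$ modulo the coinvariants relations. The main obstacle is this injectivity: verifying that the Fourier-theoretic pin-down of the $\tilde{U}_{r+1}$-coinvariants precisely matches the Schwartz coinvariants with the target, in the topological setting appropriate for Casselman-Wallach representations, requires careful handling of the Schwartz topology and a density statement for the $\tilde{U}_{r+1}$-translates transverse to the orbit $T_r \cdot G$.
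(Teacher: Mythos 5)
Your first two stages track the paper: the equivariance of $f\mapsto f_{r}$ under $G$ and under $(\tilde{U}_{r+1},\tilde{\chi}_{r+1})$ is exactly equations (\ref{f(r)center})--(\ref{f(r)character}) together with the observation that $G_{(r-1)}N_{r}$ stabilizes $T_{r}$, and your norm estimate (decay forced through $T_{r}(m^{\ast})^{-1}$ and $(T_{-r}+S)m$, with the weights $\|m\|^{d_{1}}(1+\|S\|)^{d_{2}}$ absorbed by choosing the Schwartz decay exponents large) is essentially the computation the paper carries out at the end of its proof to establish (\ref{eq:SastS}).

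The genuine gap is your third stage, which you yourself flag as the "main obstacle": that is precisely the content of the lemma, and your Fourier-analytic sketch only treats the generic locus. The $\tilde{\z}_{r+1}$-equivariance (with $\tilde{\chi}_{r+1}$ trivial on $\exp\tilde{\z}_{r+1}$ since $r>0$) pins the support of an equivariant functional, viewed as a $\Y_{(r),(r)}'$-valued distribution on $\Hom(V_{(r)},\tilde{V}_{r+1})$, to the null cone $\{TT^{\ast}=0\}$; but this cone is strictly larger than the single open orbit $\Hom_{GNM}(V_{(r)},\tilde{V}_{r+1})$ through $T_{r}$, and the heart of the paper's proof is the vanishing statement (\ref{eq:claimvanish}): no nonzero $(\tilde{U}_{r+1},\tilde{\chi}_{r+1})$-equivariant distribution is supported on the lower-rank locus $\A$. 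This is proved orbit by orbit (Bruhat-type induction over the finitely many $G(V_{(r)})\times\GL(\tilde{V}_{r+1})$-orbits), in the Archimedean case via the transverse jet bundle and the vanishing theorem of Kolk--Varadarajan --- one must check that $\tilde{U}_{r+1}$ acts trivially on transverse jets, so that possible transverse derivatives of the distribution cause no trouble --- and then by realizing $C_{c}^{\infty}(\Orb)\otimes\Y_{(r),(r)}$ as an induced module and showing the character $\tilde{R}\mapsto\psi(\Tr\tilde{R}\tilde{X})$ cannot be matched on a singular orbit because $\Im\tilde{X}\supseteq\tilde{V}_{r+1}$ while the image of $\tilde{m}^{-1}TS^{\ast}\tilde{g}$ cannot have this property when $T$ is singular. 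Your proposal contains neither the singular-orbit vanishing nor the control of transverse order (your "pin-down" would, as stated, only constrain the support, not exclude distributions with transverse derivatives along the null cone), and the subsequent regular-value arguments (for $T\mapsto TT^{\ast}$ on $\A^{c}$ and for $S\mapsto T_{r}S^{\ast}$ at $\tilde{X}$) that show the surviving distributions genuinely \emph{live on} $\Hom_{GNM}$ and on $G\times[T_{-r}+\Hom(V_{-r},\tilde{V}_{-r})]$ are also absent. Without these, the claimed injectivity of $\Psi_{r}$ on the coinvariants --- equivalently the identification (\ref{eqn:Sastchirplus1}) --- is not established, so the proposal stops short of proving the lemma.
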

\begin{proof}
Let $\lambda \in
(\Y_{(r),(r+1)}')^{\tilde{U}_{r+1},\tilde{\chi}_{r+1}}$, the $(\tilde{U}_{r+1},\tilde{\chi}_{r+1})$-isotypic subspace of $\Y_{(r),(r+1)}'$. As in the Appendix, we will identify $\lambda$ with the
$\Y_{(r),(r)}'$-valued distribution on
$\Hom(V_{(r)},\tilde{V}_{r+1})$ that sends $f\in \S_{(r),r+1}$ to the linear functional $v\mapsto \lambda(f\otimes v)$. In other words, and with some abuse of notation, we set
\[
\lambda(f)(v):=\lambda(f\otimes v) \qquad \mbox{for all $f\in \S_{(r),r+1}$, $v\in \Y_{(r),(r)}.$}
\]
Now recall that for $\tilde{Z}\in \tilde{\z}_{r+1}$, $f\in \S_{(r),r+1}$ and $T\in \Hom(V_{(r)},\tilde{V}_{r+1})$, we have $\omega_{(r),(r+1)}(\exp \tilde{Z}) f(T)=\psi(\Tr \tilde{Z}TT^{\ast}/2)f(T)$.
Hence, for all such $\tilde{Z}$ we have that
\begin{equation}\label{eq:tildezaction}
\omega_{(r),(r+1)}(\exp \tilde{Z}) \lambda = \psi(\Tr \tilde{Z}TT^{\ast}/2) \lambda,
\end{equation}
that is, the action of $\exp \tilde{Z}$ on $\lambda$ is given by multiplication by the function $T\mapsto \psi(\Tr \tilde{Z}TT^{\ast}/2)$.
On the other hand, since $\lambda\in
(\Y_{(r),(r+1)}')^{\tilde{U}_{r+1},\tilde{\chi}_{r+1}}$ and since $\tilde{\z}_{r+1} \subseteq \u_{r+1}$,
we must have that
\begin{equation}\label{eq:tildezequivariant}
\omega_{(r),(r+1)}(\exp \tilde{Z}) \lambda = \psi(\Tr \tilde{Z}\tilde{X}/2) \lambda=\lambda,
\end{equation}
where the latter equality is due to the fact that $\tilde{Z}\in \g_{-2r-2}(\tilde{V})$, $\tilde{X}\in \g_{2}(\tilde{V})$ and $r>0$.
Define $h_{\tilde{Z}}\in C^{\infty}(\Hom(V_{(r)},\tilde{V}_{r+1}))$ by
$h_{\tilde{Z}}(T)= \psi(\Tr \tilde{Z}TT^{\ast}/2) -1$ for all $T\in \Hom(V_{(r)},\tilde{V}_{r+1})$. Then we can reformulate equations (\ref{eq:tildezaction}) and (\ref{eq:tildezequivariant}) by saying that $ h_{\tilde{Z}}\lambda=0$, for all $\tilde{Z}\in \tilde{\z}_{r+1}$. It then follows immediately that
\[
\begin{aligned}
\supp \lambda\subseteq &\bigcap_{
\tilde{Z}\in \tilde{\z}_{r+1}}
\{T \in \Hom(V_{(r)},\tilde{V}_{r+1})\, | \, h_{\tilde{Z}}(T)=0\} \\
 &=\{T\in
\Hom(V_{(r)},\tilde{V}_{r+1}) \, | \, TT^{\ast}=0\}.
\end{aligned}
\]
Let
\begin{equation}
\label{defMN}
\Hom_{NM}(V_{(r)},\widetilde{V}_{r+1})=\{T\in
\Hom(V_{(r)},\widetilde{V}_{r+1})\, | \, \mbox{$TT^{\ast}=0$}\}, \ \ \text{and}
\end{equation}
\begin{equation}
\label{defGMN}
\Hom_{GNM}(V_{(r)},\widetilde{V}_{r+1})=\{T\in
\Hom(V_{(r)},\widetilde{V}_{r+1})\, | \, \mbox{$TT^{\ast}=0$ and $T$
has maximal rank}\}.
\end{equation}
(Here the subscripts NM and GNM stand for null mappings and generic null mappings, respectively. Note that both sets are locally closed.)

We claim
that the natural map
\[\Y_{(r),(r+1)} \twoheadrightarrow
(\Y_{(r),(r+1)})_{\tilde{U}_{r+1},\tilde{\chi}_{r+1}}
\] factors
through the intermediate space $\S(\Hom_{GNM}(V_{(r)},\tilde{V}_{r+1}))\otimes
\Y_{(r),(r)}$. In other words, if
$\lambda\in (\Y_{(r),(r+1)}')^{\tilde{U}_{r+1},\tilde{\chi}_{r+1}}$,
then we can identify $\lambda$ with a $\Y_{(r),(r)}'$-valued tempered distribution \emph{living on}
$\Hom_{GNM}(V_{(r)},\tilde{V}_{r+1})$. See the Appendix for the terminology ``\emph{living on}''.
So far we have only shown that if $\lambda \in (\Y_{(r),(r+1)}')^{\tilde{U}_{r+1},\tilde{\chi}_{r+1}}$, then $\supp \lambda \subseteq \Hom_{NM}(V_{(r)},\tilde{V}_{r+1})$. Let
\[\A =\{T \in \Hom(V_{(r)},\tilde{V}_{r+1}) \, | \, \mbox{$T$ is
singular}\}.\]
Clearly $\Hom_{GNM}(V_{(r)},\tilde{V}_{r+1})=\Hom_{NM}(V_{(r)},\tilde{V}_{r+1})\cap \A^{c}$, where $\A^{c}$ is the complement of $\A$. So, as a first step towards proving our claim, we will show that
\begin{equation}\label{eq:claimvanish}
\mbox{if $\lambda \in  (\Y_{(r),(r+1)}')^{\tilde{U}_{r+1},\tilde{\chi}_{r+1}}$} \quad \mbox{ and}\quad \mbox{ $\supp \lambda \subseteq \A,$} \qquad \mbox{then}\quad \mbox{ $\lambda=0.$}
\end{equation}

Observe that $\A \subset \Hom(V_{(r)},\tilde{V}_{r+1})$ is invariant under the natural action of $G(V_{(r)})\times \GL(\tilde{V}_{r+1})$ and decomposes as a finite union of orbits. (Actually, $\Hom(V_{(r)},\tilde{V}_{r+1})$ itself decomposes as a finite union of orbits.) Furthermore, if $\mathcal{C} \subset\A$ is a closed invariant subset, then there exists an orbit $\Orb\subset \mathcal{C}$ that is open in $\mathcal{C}$. Therefore, we may use a Bruhat type argument to prove statement (\ref{eq:claimvanish}). To be precise, we will show that if $\lambda \in (\Y_{(r),(r+1)}')^{\tilde{U}_{r+1},\tilde{\chi}_{r+1}}$, $\mathcal{C}\subset \A$ is a non-empty closed invariant subset containing $\supp \lambda$ and $\Orb \subset \mathcal{C}$ is a relatively open orbit, then $\lambda$ vanishes in the complement of $\mathcal{C}\backslash \Orb$. Once this is done, a straightforward inductive argument finishes the proof of statement (\ref{eq:claimvanish}).

If $\k$ is non-Archimedian then we may immediately identify $\lambda$ with a distribution living on $\mathcal{C}$. In this case, by restricting $\lambda$ to $\Orb$ we obtain a well-defined distribution on this orbit. So we will just need to focus on the case where $\k$ is Archimedian. Given any $x\in \Orb$, there exists a neighborhood $U_{x}\subset \Hom(V_{(r)},\tilde{V}_{r+1})$ of $x$ such that the order of $\lambda|_{U_{x}}$ is $\leq l$ for some $l\in \N$. Therefore, we may find a locally finite open cover $\{U_{i}\}_{i\in\N}$ of $\Hom(V_{(r)},\tilde{V}_{r+1})$ and a partition of unity $\{\rho_{i}\}$ subordinated to $\{U_{i}\}$ such that $\lambda =\sum_{i}\rho_{i} \lambda$ and each $\rho_{i}\lambda$ is of order $\leq l_{i}$, for some $l_{i}\in \N$. Now, since multiplication by a $C^{\infty}$-function clearly commutes with the action of $\tilde{U}_{r+1}$, we have that, if $\lambda \in (\Y_{(r),(r+1)}')^{\tilde{U}_{r+1},\tilde{\chi}_{r+1}}$, then $\rho_{i}\lambda$ is also in $(\Y_{(r),(r+1)}')^{\tilde{U}_{
r+1},\tilde{\chi}_{r+1}}$ for all $i$. The upshot is that in order to prove statement (\ref{eq:claimvanish}) it is enough to just consider distributions $ \lambda \in (\Y_{(r),(r+1)}')^{\tilde{U}_{r+1},\tilde{\chi}_{r+1}}$ of finite order. But in such a case, to prove the vanishing of $\lambda$ in the complement of $\mathcal{C}\backslash \Orb$, it suffices to show that $D'(\Orb;\Y_{(r),(r)}\otimes M^{(l)'})^{\tilde{U}_{r+1},\tilde{\chi}_{r+1}}=0$ for all $l\geq 0$ \cite[Vanishing Theorem 3.15]{KV96}. Here $M^{(l)}$ is the $l$-th transverse jet bundle of $\Orb$ (see the Appendix). More details on the transverse jet bundle in the context of a Lie group action can be found in \cite[Sections 2 and 3]{KV96}.
Now observe that, given $S\in \Hom(V_{(r)},\tilde{V}_{r+1})$ and $\tilde{Z}\in \tilde{\z}_{r+1}$,
\begin{eqnarray*}
\frac{\partial}{\partial S}(\exp \tilde{Z})\cdot f(T) & = & \left. \frac{d}{dt}\right|_{t=0}\psi(\Tr(T+tS)(T+tS)^{\ast}\tilde{Z}/2)f(T+tS)\\
& = & \psi(\Tr TT^{\ast}\tilde{Z}/2)\psi(\Tr ST^{\ast}\tilde{Z}/2)f(T)+\psi(\Tr TT^{\ast}\tilde{Z}/2)\psi(\Tr TS^{\ast}\tilde{Z}/2)f(T)\\ & & {}+\psi(\Tr TT^{\ast} \tilde{Z}/2)\frac{\partial f}{\partial S}(T),
\end{eqnarray*}
and if $R\in \Hom(\tilde{V}_{r+1},\tilde{V}_{(r)}) \subset \tilde{\n}_{r+1}$, then
\begin{eqnarray*}
\frac{\partial}{\partial S}(\exp R)\cdot f(T) & = & \left. \frac{d}{dt}\right|_{t=0}\omega_{(r),(r)}(R(T+tS))f(T+tS)\\
                             & = & \left. \frac{d}{dt}\right|_{t=0}\omega_{(r),(r)}(RT)\omega_{(r),(r)}(RtS)f(T+tS)\\
                             & = & \omega_{(r),(r)}(RT)d\omega_{(r),(r)}(RS)f(T)+\omega_{(r),(r)}(RT)\frac{\partial}{\partial S}f(T).
\end{eqnarray*}
What the above couple of equations imply is that the action of $\tilde{N}_{r+1}$ (and in particular the action of $\tilde{U}_{r+1}$) on the $l$-th transverse bundle is trivial, and hence $D'(\Orb;\Y_{(r),(r)}\otimes (M^{(l)})')^{\tilde{U}_{r+1},\tilde{\chi}_{r+1}}\cong M^{(l)}\otimes D'(\Orb;\Y_{(r),(r)})^{\tilde{U}_{r+1},\tilde{\chi}_{r+1}}$. From all this we see that, in order to prove statement (\ref{eq:claimvanish}), it suffices to show that
\begin{equation}
\label{vanish-orbit}
D'(\Orb;\Y_{(r),(r)})^{\tilde{U}_{r+1},\tilde{\chi}_{r+1}}=0
\end{equation}
for every orbit $\Orb \subset \A$.

At this point we come back to the general case, where $\k$ can be
Archimedean or non-Archimedean. Let $\lambda \in D'(\Orb;\Y_{(r),(r)})^{\tilde{U}_{r+1},\tilde{\chi}_{r+1}}$, where $\Orb \subset \A$ is a $G(V_{(r)})\times \GL(\tilde{V}_{r+1})$-orbit. As we have already seen, the equivariance of $\lambda$ implies that $\supp \lambda \subseteq \Orb\cap \Hom_{NM}(V_{(r)},\tilde{V}_{r+1})$. Since the latter set is $G(V_{(r)})\times \GL(\tilde{V}_{r+1})$ invariant, we see that actually $\Orb \subset \Hom_{NM}(V_{(r)},\tilde{V}_{r+1})$. From equations (\ref{eq:vectorbundleaction1}) and (\ref{eq:vectorbundleaction2}), it is clear that once we fix any $T\in \Orb$ we could describe $C_{c}^{\infty}(\Orb)\otimes \Y_{(r),(r)}$ as an induced representation. (Here we are using the identifications $G\cong G(V_{(r)})$ and $\tilde{M}_{r+1}\cong\GL(\tilde{V}_{r+1})$.) Translating under the action of $G(V_{(r)})$, if necessary, we may assume that $T\in \Orb$ satisfies  $T|_{\oplus_{k=-r}^{r-1}V_{k}}=0$. To see why, let $T\in \Hom_{NM}(V_{(r)},\tilde{V}_{r+1})$ ($\subseteq \Hom_{NM}(V_{(r)},\tilde{
V}_{(r+1)})$), and so $T^{\ast}|_{\tilde{V}_{r+1}^{\perp}}=0$. Since $\tilde{V}_{r+1}^{\perp}=\tilde{V}_{(r)}\oplus\tilde{V}_{r+1}$ we have that $\Im T^{\ast}$ ($\subset V_{(r)}$) is a totally isotropic subspace with dimension less than or equal to $\dim \tilde{V}_{-r-1}=\dim V_{-r}$. We may thus assume, by translating under $G(V_{(r)})$ if necessary, that $\Im T^{\ast}\subset V_{-r}$, which will imply that $T|_{\oplus_{k=-r}^{r-1}V_{k}}=0$. We identify such a $T$ with an element of $\Hom(V_{r},\tilde{V}_{r+1})\subset \Hom(V_{(r)},\tilde{V}_{r+1})$ in an obvious way. Now we have an isomorphism of $G\times \tilde{P}_{r+1}$-modules:
\begin{equation}\label{eq:compatlysupportedinduction}
C_{c}^{\infty}(\Orb)\otimes \Y_{(r),(r)}\cong C^{\infty}_c-\Ind_{\Stab_{T}}^{G\times
\tilde{P}_{r+1}} (\Y_{(r),(r)}).
\end{equation}
 Here $\Stab_{T}$ denotes the stabilizer of $T$ in $G\times \tilde{P}_{r+1}$ and the notation $C^{\infty}_c-\Ind$ means that
we are taking compactly supported induction. The isomorphism is given by the map $f\mapsto \hat{f}$, where we have set $\hat{f}(g,\tilde{p}) =[\omega_{(r),(r+1)}(g)\omega_{(r),(r+1)}(\tilde{p})f](T)$, for any given $f\in C_{c}^{\infty}(\Orb)\otimes \Y_{(r),(r)}$. Its inverse is given by the map $h\mapsto \check{h}$,  where $\check{h}(\tilde{p}^{-1}Tg)=\sigma_{T}(g)^{-1}\sigma_{T}(\tilde{p})^{-1}h(g,\tilde{p})$ for all $h\in C^{\infty}_c-\Ind_{\Stab_{T}}^{G\times
\tilde{P}_{r+1}} (\Y_{(r),(r)})$, $g\in G$, $\tilde{p}\in \tilde{P}_{r+1}$.
Let us describe the action on the right hand side of equation (\ref{eq:compatlysupportedinduction}) more precisely. For this we will use the isomorphism $\Y_{(r),(r)}\cong \S_{-r,(r)}\otimes \Y_{(r-1),(r)}$. According to equations (\ref{eq:action1})--(\ref{eq:vectorbundleaction2}), the action of $ \tilde{G}_{(r)}\tilde{N}_{r+1}$ on $\Y_{(r),(r)}$ is given by
\begin{eqnarray}
\sigma_{T}(\tilde{g})h(S) & = &\omega_{(r-1),(r)}(\tilde{g}) h(\tilde{g}^{-1}S) \qquad \mbox{for $\tilde{g}\in \tilde{G}_{(r)}$}, \label{eq:sigmaaction1} \\
\sigma_{T}(\exp \tilde{Z})h(S) & = &\psi(\Tr \tilde{Z}TT^{\ast}/2)h (S) \qquad \mbox{for $\tilde{Z}\in \tilde{\z}_{r+1}$}, \label{eq:sigmaaction2}\\
\sigma_{T}(\exp \tilde{R})h(S) & = & \omega_{(r),(r)}(\tilde{R}T)h(S)=\psi(\Tr\tilde{R}TS^{\ast})h(S) \qquad \mbox{for $\tilde{R}\in \Hom(\tilde{V}_{r+1},\tilde{V}_{(r)})$.} \label{eq:sigmaaction3}
\end{eqnarray}
In the last equality, we have used the fact that, since $T\in \Hom(V_{r},\tilde{V}_{r+1})\subset \Hom(V_{(r)},\tilde{V}_{r+1})$, then $\tilde{R}T$ is in $\Hom(V_{r},\tilde{V}_{(r)}) \subset \Hom(V_{(r)},\tilde{V}_{(r)})$ which is a totally isotropic subspace that is complementary to $\Hom(V_{-r},\tilde{V}_{(r)})$. (Here is where the choice of $T$ simplifies matters.)
Now observe that there is a continuous $G\times \tilde{P}_{r+1}$-intertwining map from $C_{c}^\infty(G\times  \tilde{P}_{r+1}\times \Hom(V_{-r},\tilde{V}_{(r)}); \Y_{(r-1),(r)})$ to  $C^{\infty}_c-\Ind_{\Stab_{T}}^{G\times
\tilde{P}_{r+1}} (\S_{-r,(r)}\otimes \Y_{(r-1),(r)})$, with dense image, given by
\[
\tilde{h}(g,\tilde{p},S)=\int_{\Stab_{T}}(\sigma_{T}(x)^{-1}h(x_{g}g,x_{\tilde{p}}\tilde{p})(S)\, dx, \qquad \mbox{ $h\in C_{c}^\infty(G\times  \tilde{P}_{r+1}\times \Hom(V_{-r},\tilde{V}_{(r)}); \Y_{(r-1),(r)})$.}
\]
Here $x_{g}$ and $x_{\tilde{p}}$ are the projections of $x\in \Stab_{T}$ to $G$ and $\tilde{P}_{r+1}$, respectively.
Therefore, we may define a $\tilde{U}_{r+1}$-equivariant $\Y_{(r-1),(r)}'$-valued distribution $\bar{\lambda}$ on $G\times \tilde{P}_{r+1}\times \Hom(V_{-r},V_{(r)})$ by setting
\[
\bar{\lambda}(h)=\lambda(\check{\tilde{h}}), \qquad \mbox{for all $h\in C_{c}^\infty(G\times  \tilde{P}_{r+1}\times \Hom(V_{-r},\tilde{V}_{(r)}); \Y_{(r-1),(r)})$.}
\]
It is clear that if $\tilde{p}=\tilde{m}\tilde{g}\tilde{n}$, with $\tilde{m}\in \tilde{M}_{r+1}\cong\GL(\tilde{V}_{r+1})$, $\tilde{n}\in \tilde{N}_{r+1}$, and $\tilde{g}\in \tilde{G}_{(r)}$, then, for all $h\in  C^{\infty}_c-\Ind_{\Stab_{T}}^{G\times
\tilde{P}_{r+1}} (\S_{-r,(r)}\otimes \Y_{(r-1),(r)})$, $S\in \Hom(V_{-r},\tilde{V}_{(r)})$, and $\tilde{R}\in \Hom(\tilde{V}_{r+1},\tilde{V}_{(r)})$, we have
\[
\exp \tilde{R} \cdot h(g,\tilde{m}\tilde{g}\tilde{n},S)=\psi(\Tr \tilde{R}\tilde{m}^{-1}TS^{\ast}\tilde{g})h(g,\tilde{m}\tilde{g}\tilde{n},S).
\]
Hence
\[
\exp \tilde{R}\cdot \bar{\lambda}=\psi(\Tr \tilde{R}\tilde{m}^{-1}TS^{\ast}\tilde{g})\bar{\lambda}.
\]
In other words, the action of $\exp \tilde{R}$ on $\bar{\lambda}$ is given by multiplication by the function $(g,\tilde{m}\tilde{g}\tilde{n},S) \mapsto \psi(\Tr \tilde{R}\tilde{m}^{-1}TS^{\ast}\tilde{g})$. On the other hand, since $\lambda$ is $\tilde{U}_{r+1}$-equivariant, we should have
\[
\exp \tilde{R} \cdot \bar{\lambda}=\psi(\Tr \tilde{R}\tilde{X})\bar{\lambda},  \qquad \mbox{for all $\tilde{R}\in \Hom(\tilde{V}_{r+1},\tilde{V}_{(r-1)}\oplus
\tilde{V}_{-r})\subset \tilde{\u}_{r+1}$}.
\]
Here for the trace function, we are considering $\tilde{X}$ and $\tilde{R}$ as elements of $\End(\tilde{V}_{(r+1)})$. Therefore,
\[
\supp \bar{\lambda}\subset \{(g,\tilde{m}\tilde{g}\tilde{n},S) \, | \, \mbox{$ \tilde{R}\tilde{m}^{-1}TS^{\ast}\tilde{g}=\tilde{R}\tilde{X}$, for all $\tilde{R}\in \Hom(\tilde{V}_{r+1},\tilde{V}_{(r-1)}\oplus
\tilde{V}_{-r})$}\},
\]
but the latter set is empty since $\Im \tilde{X}$ contains $\tilde{V}_{r+1}$, whereas the image of $\tilde{m}^{-1}TS^{\ast}\tilde{g}$ does not have the same property due to the fact that $T$ is singular. From all this, we conclude that if $\Orb\subset \A$, then $D'(\Orb;\Y_{(r),(r)})^{\tilde{U}_{r+1},\tilde{\chi}_{r+1}}=0$. This is statement (\ref{vanish-orbit}) which, as we have already seen, implies statement (\ref{eq:claimvanish}).

Having proved statement (\ref{eq:claimvanish}), we come back to our original assertion that the map $\Y_{(r),(r+1)} \twoheadrightarrow
(\Y_{(r),(r+1)})_{\tilde{U}_{r+1},\tilde{\chi}_{r+1}}$ factors
through $\S(\Hom_{GNM}(V_{(r)},\tilde{V}_{r+1}))\otimes
\Y_{(r),(r)}$. We have seen that any $\lambda\in
(\Y_{(r),(r+1)}')^{\tilde{U}_{r+1},\tilde{\chi}_{r+1}}$ is completely determined by its restriction to
$C_{c}^{\infty}(\A^{c})$, where $\A^{c}$ is the complement of $\A$ in
$\Hom(V_{(r)},\tilde{V}_{r+1})$. But on $\A^{c}$, $0$ is a regular
value of the map $T\mapsto TT^{\ast}$. Therefore, $\lambda|_{A^{c}}$ is a
distribution that \emph{lives on} $\Hom_{GNM}(V_{(r)},\tilde{V}_{r+1})$, that is, $\lambda$ can be identified with an
element of $(\S(\Hom_{GNM}(V_{(r)},\tilde{V}_{r+1}))\otimes
\Y_{(r),(r)})'$, as we wanted to show. C.f. \cite[page 49, Exercise 4.5]{FJ}. (Note that we may make a suitable change of variables, when we have a regular value.)

Given $f\in \S(\Hom_{GNM}(V_{(r)},\tilde{V}_{r+1}))\otimes
\Y_{(r),(r)}$, we set
\begin{equation}
\tilde{f}_{r}(g)=[\omega_{(r),(r+1)}(g)f](T_{r})=\omega_{(r),(r)}(g)[f(T_{r}g)],\qquad \mbox{for $g\in G$.}
\end{equation}
Then it is clear that $\tilde{f}_{r}\in
C^{\infty}(N_{r} G_{(r-1)}\backslash G;\Y_{(r),(r)})$ for all $f\in
\S(\Hom_{GNM}(V_{(r)},\tilde{V}_{r+1}))\otimes \Y_{(r),(r)}$. Let
\[
\S^{+}(N_{r} G_{(r-1)}\backslash G;\Y_{(r),(r)})=\{\tilde{f}_{r}\, | \, f\in
\S(\Hom_{GNM}(V_{(r)},\tilde{V}_{r+1}))\otimes \Y_{(r),(r)}\}.
\]
Now observe that $\Hom_{GNM}(V_{(r)},\tilde{V}_{r+1})$ is a single orbit
under the action of $G$. Therefore, since  $G_{(r-1)}N_{r}$ is the
stabilizer of $T_{r} \in
\Hom_{GNM}(V_{(r)},\tilde{V}_{r+1})$,  we have that as $G$-modules
\[
\S(\Hom_{GNM}(V_{(r)},\tilde{V}_{r+1}))\otimes \Y_{(r),(r)}\cong
\S^{+}(N_{r} G_{(r-1)}\backslash G;\Y_{(r),(r)}).
\]

Now we will show that
\begin{equation}\label{eqn:Sastchirplus1}
 \S^{+}(N_{r} G_{(r-1)}\backslash G;\Y_{(r),(r)})_{\tilde{U}_{r+1},\tilde{\chi}_{r+1}}\cong \S^{+}(N_{r} G_{(r-1)}\backslash G;\S_{-r,-r}\otimes \Y_{(r-1),(r)}),
\end{equation}
where
\[
\S^{+}(N_{r} G_{(r-1)}\backslash G;\S_{-r,-r}\otimes\Y_{(r-1),(r)})=\{f_{r} \,
| \, f\in \Y_{(r),(r+1)}\}.
\]
So let $\lambda \in
(\S^{+}(N_{r} G_{(r-1)}\backslash G;\Y_{(r),(r)})')^{\tilde{U}_{r+1},\tilde{\chi}_{r+1}}$.
We will identify $\lambda$ with a $\Y_{(r-1),(r)}'$-valued
distribution on $G\times \Hom(V_{-r},\tilde{V}_{(r)})$. Observe
that, \mbox{for all $ f\in \S^{+}(N_{r} G_{(r-1)}\backslash G;\Y_{(r),(r)})$}, $g\in G$, $S\in \Hom(V_{-r},\tilde{V}_{(r)})$, and $\tilde{R}\in
\Hom(\tilde{V}_{r+1},\tilde{V}_{(r)})$, we have
\[
\exp \tilde{R}\cdot f(g)(S)=\psi(\Tr \tilde{R}T_{r}S^{\ast})f(g)(S).
\]
Therefore, since $\lambda\in
(\S^{+}(N_{r} G_{(r-1)}\backslash G;\Y_{(r),(r)})')^{\tilde{U}_{r+1},\tilde{\chi}_{r+1}}$,
we must have
\[
\psi(\Tr \tilde{R}T_{r}S^{\ast})\lambda=\psi(\Tr \tilde{R}\tilde{X})\lambda,
\]
for all $\tilde{R}\in \Hom(\tilde{V}_{r+1},\tilde{V}_{(r-1)}\oplus
\tilde{V}_{-r}) \subset \tilde{\u}_{r+1}$. Now, since $\tilde{X}$ is a regular value of the
map $S\mapsto T_{r}S^{\ast}$, we conclude that $\lambda$ is a
distribution that \emph{lives on}
\[
G\times \{S \in \Hom(V_{-r},\tilde{V}_{(r)})\, | \,
\mbox{$\tilde{R}T_{r}S^{\ast}=\tilde{R}\tilde{X}$ for all $\tilde{R}\in
\Hom(\tilde{V}_{r+1},\tilde{V}_{(r-1)}\oplus
\tilde{V}_{-r})$}\}.
\]
But the last set is just $G\times [T_{-r}+\Hom(V_{-r},\tilde{V}_{-r})]$. The isomorphism given in equation (\ref{eqn:Sastchirplus1}) now follows immediately.

The only thing that remains to be checked is that
\begin{equation}\label{eq:SastS}
\S^{+}(N_{r} G_{(r-1)}\backslash G;\S_{-r,-r}\otimes\Y_{(r-1),(r)})\cong
\Sch(N_{r} G_{(r-1)}\backslash G;\S_{-r,-r}\otimes\Y_{(r-1),(r)}).
\end{equation}
For this, let $\rho$ be a seminorm in $\Y_{(r-1),(r)}$, $Z_{1}\in
\D(\Hom(V_{-r},\tilde{V}_{-r}))$, $Z_{2}\in U(\g)$, $d_{1}$, $d_{2}\in
\N$ and $f\in \S_{(r),r+1}\otimes \S_{-r,(r)}\otimes
\Y_{(r-1),(r)}$. Then
\begin{eqnarray*}
q_{Z_{1},Z_{2},d_{1},d_{2},\rho}(f_{r}) & = &\sup_{k\in K, m\in M_{r}, S\in \Hom(V_{-r},\tilde{V}_{-r})} \rho(Z_{1}[R_{Z_{2}}f_{r}(mk)](S))\|mk\|^{d_{1}}(1+\|S\|)^{d_{2}} \\
   & = &\sup_{k, m, S} \rho(Z_{1}[(k\cdot R_{Z_{2}}f)_{r}(m)](S))\|m\|^{d_{1}}(1+\|S\|)^{d_{2}} \\
                  & = & \sup_{k,m,S}\rho(\omega_{(r),(r)}(m)Z_{1}(k\cdot R_{Z_{2}}f)(T_{r}m)(T_{-r}+S))\|m\|^{d_{1}}(1+\|S\|)^{d_{2}}\\
                  & = & \sup_{k,m,S}\rho(Z_{1}(k\cdot R_{Z_{2}}f)(T_{r}m)(T_{-r}m+Sm))\|m\|^{d_{1}}(1+\|S\|)^{d_{2}}. \\
                  & \leq & \sup_{k,m,S} q_{Z_{1},c_{1},c_{2},\rho}(k\cdot R_{Z_{2}}f)(1+\|T_{r}m+T_{-r}m\|)^{-c_{1}}\\ & & {}\cdot (1+\|Sm\|)^{-c_{2}}    \|m\|^{d_{1}}(1+\|S\|)^{d_{2}},
\end{eqnarray*}
where $c_{1}$ ,$c_{2}\in \N$ are arbitrary and $q_{Z_{1},c_{1},c_{2},\rho}$  is a seminorm on $\S_{(r),r+1}\otimes \S_{-r,(r)}\otimes
\Y_{(r-1),(r)}$. But then there exists a constant $C>0$ such that
\begin{eqnarray*}
q_{Z_{1},Z_{2},d_{1},d_{2},\rho}(f_{r}) & \leq & \sup_{k,m,S}\, C\, q_{Z_{1},c_{1},c_{2},\rho}(k\cdot R_{Z_{2}}f)    \|m\|^{-c_{1}}(1+\|S\|)^{-c_{2}}\|m\|^{c_{2}}    \|m\|^{d_{1}}(1+\|S\|)^{d_{2}} \\
& \leq &\sup_{k,m,S}\, C\, q_{Z_{1},c_{1},c_{2},\rho}(k\cdot R_{Z_{2}}f) \|m\|^{d_{1}+c_{2}-c_{1}}(1+\|S\|)^{d_{2}-c_{2}}.
\end{eqnarray*}
Now observe that if we fix $c_{2} \gg d_{2}$ and $c_{1}\gg d_{1}+c_{2}$, then
\[
\sup_{m,S} C q_{Z_{1},c_{1},c_{2},\rho}(k\cdot R_{Z_{2}} f)\|m\|^{d_{1}+c_{2}-c_{1}}(1+\|S\|)^{d_{2}-c_{2}} <\infty,
\]
for all $k\in K$. Furthermore, the map $k\mapsto \sup_{m,S} C \,  q_{Z_{1},c_{1},c_{2},\rho}(k\cdot f)\|m\|^{d_{1}+c_{2}-c_{1}}(1+\|S\|)^{d_{2}-c_{2}}$ is continuous, and hence, since $K$ is compact, bounded. Therefore $q_{Z_{1},Z_{2},d_{1},d_{2},\rho}(f_{r})<\infty$ and (\ref{eq:SastS}) follows.
\end{proof}

Observe that the space
$(\Y_{(r),(r+1)})_{\tilde{U}_{r+1},\tilde{\chi}_{r+1}}$ carries a
natural action of the group $\tilde{N}$. As a consequence of the isomorphism
$(\Y_{(r),(r+1)})_{\tilde{U}_{r+1},\tilde{\chi}_{r+1}}\cong
\Sch(N_{r} G_{(r-1)}\backslash G;\S_{-r,-r}\otimes\Y_{(r-1),(r)})$, we may define an action of
$\tilde{N}$ on $\Sch(N_{r} G_{(r-1)}\backslash
G;\S_{-r,-r}\otimes\Y_{(r-1),(r)})$ by the formula $(\tilde{n}\cdot
f_{r})(g)(S):=(\omega_{(r),(r+1)}(\tilde{n})f)_{r}(g)(S)$, for
$f\in \Y_{(r),(r+1)}$, $\tilde{n}\in \tilde{N}$ and $S\in \Hom(V_{-r},\tilde{V}_{-r})$. We describe this action more concretely in the following

\begin{lemma}\label{lemma:inducedaction}
There is an induced action of $\tilde{N}$ ($=\tilde{N}_{(r)}\tilde{N}_{r+1}$) on $\Sch(N_{r} G_{(r-1)}\backslash
G;\S_{-r,-r}\otimes\Y_{(r-1),(r)})$, as follows:
for all $\phi\in  \Sch(N_{r} G_{(r-1)}\backslash G;\S_{-r,-r}\otimes\Y_{(r-1),(r)})$, $g\in G$, $S\in \Hom(V_{-r},\tilde{V}_{-r})$,
\begin{eqnarray*}
(\exp \tilde{Z}\cdot \phi)(g)(S)&= & \phi(g)(S) \qquad \mbox{for $\tilde{Z}\in \tilde{\z}_{r+1}$}\\
%\label{eq:ntildeaction1}\\
(\exp \tilde{R}\cdot \phi)(g)(S)&= & \psi(\Tr \tilde{R}T_{r}T_{-r}^{\ast})\psi(\Tr \tilde{R}T_{r}S^{\ast})\phi(g)(S) \qquad \mbox{for $\tilde{R}\in \Hom(\tilde{V}_{r+1},\tilde{V}_{(r)})$}\\
%\label{eq:ntildeaction2}
(\exp \tilde{Z}\cdot \phi)(g)(S)&= &\omega_{(r-1),(r)}(\exp \tilde{Z})[ \phi(g)(S)] \qquad \mbox{for $\tilde{Z}\in \tilde{\z}_{r}$}\\
%\label{eq:ntildeaction3}
(\exp \tilde{R}\cdot \phi)(g)(S)&= &\omega_{(r-1),(r)}(\exp \tilde{R})[ \phi(g)(S+\tilde{R}^{\ast}T_{-r})] \qquad \mbox{for $\tilde{R}\in \Hom(\tilde{V}_{r},\tilde{V}_{(r-1)})$}\\
%\label{eq:ntildeaction4}
(\tilde{n}\cdot \phi)(g)(S)&=&\omega_{(r-1),(r)}(\tilde{n})[\phi(g)(S)]  \qquad \mbox{for $\tilde{n}\in \tilde{N}_{(r-1)}.$}
%\label{eq:ntildeaction5}
\end{eqnarray*}
In particular, we have
\begin{equation}
\label{eq:ntildeaction6}
(\tilde{u}\cdot \phi)(g)(S)=\omega_{(r-1),(r)}(\tilde{u})[\phi(g)(S)]  \qquad \mbox{for $\tilde{u}\in \tilde{U}_{(r)}$.}
\end{equation}
\end{lemma}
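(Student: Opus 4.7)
The plan is to compute $(\omega_{(r),(r+1)}(\tilde n)f)_r(g)(S)$ for each generator $\tilde n$ of $\tilde N=\tilde N_{(r)}\tilde N_{r+1}$ by direct application of the explicit action formulas (\ref{eq:action1})--(\ref{eq:action5}) for $\tilde P_{r+1}$ on $\Y_{(r),(r+1)}\cong \S_{(r),r+1}\otimes \Y_{(r),(r)}$, together with their recursive analogs for $\omega_{(r),(r)}$ on $\Y_{(r),(r)}\cong \S_{-r,(r)}\otimes \Y_{(r-1),(r)}$. The key starting observation is the rewriting
\[
(\omega_{(r),(r+1)}(\tilde n)f)_r(g)(S) = [\omega_{(r),(r+1)}(\tilde n)F](T_r,T_{-r}+S),
\]
where $F=\omega_{(r),(r+1)}(g)f$, valid because $G$ and $\tilde G$ commute in the oscillator representation.

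For $\tilde Z\in\tilde\z_{r+1}$, equation (\ref{eq:action4}) contributes the scalar $\psi(\Tr \tilde Z T_r T_r^*/2)$; but viewing $T_r\in\Hom(V_{(r)},\tilde V_{r+1})$, the adjoint $T_r^*$ necessarily factors through $\tilde V_{-r-1}$, which is disjoint from $\tilde V_{r+1}$ for $r>0$, so $T_r T_r^*=0$ and the action is trivial. For $\tilde R\in\Hom(\tilde V_{r+1},\tilde V_{(r)})$, equation (\ref{eq:action5}) yields the Heisenberg action $\omega_{(r),(r)}(-\tilde R T_r)$ on $F(T_r)$; since $-\tilde R T_r\in\Hom(V_r,\tilde V_{(r)})$ lies in the character-side of the polarization of $\Y_{(r),(r)}$, this is multiplication by $\psi(\langle -\tilde R T_r,\,T_{-r}+S\rangle)$, and expanding the symplectic pairing $\langle A,B\rangle=\Tr(A^*B)$ together with cyclicity of the trace produces the two factors $\psi(\Tr \tilde R T_r T_{-r}^*)$ and $\psi(\Tr \tilde R T_r S^*)$ stated in the lemma.

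For $\tilde n\in\tilde N_{(r)}\subset \tilde G_{(r)}$, equation (\ref{eq:action3}) reduces the computation to $\omega_{(r),(r)}(\tilde n)[F(T_r)]$, after which one applies the analogs of (\ref{eq:action1})--(\ref{eq:action5}) one level down, using the parabolic $\tilde P_r=\tilde M_{(r)}\tilde N_r$ of $\tilde G_{(r)}$ and the decomposition $\Hom(V_{(r)},\tilde V_{(r)})=\Hom(V_{-r},\tilde V_{(r)})\oplus \Hom(V_r,\tilde V_{(r)})\oplus \Hom(V_{(r-1)},\tilde V_{(r)})$. Unipotents from $\tilde G_{(r)}$ preserve this decomposition summand-wise and hence act on the Schwartz factor $\S_{-r,(r)}$ by translation. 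For $\tilde g=\exp(\tilde R-\tilde R^*)$ with $\tilde R\in\Hom(\tilde V_r,\tilde V_{(r-1)})$, the nilpotency identities $\tilde R(T_{-r}+S)=0$ and $(\tilde R-\tilde R^*)^2(T_{-r}+S)=0$ give $\tilde g^{-1}(T_{-r}+S)=T_{-r}+(S+\tilde R^*T_{-r})$, matching the stated shift. For $\tilde Z\in\tilde\z_r$, the same analysis gives $\tilde Z(T_{-r}+S)=0$ (since $\tilde Z$ annihilates $\tilde V_{(r-1)}\oplus\tilde V_{-r}$), leaving only the action $\omega_{(r-1),(r)}(\exp\tilde Z)$ on the $\Y_{(r-1),(r)}$-fiber; and for $\tilde n\in\tilde N_{(r-1)}\subset\tilde G_{(r-1)}$, the Schwartz variable is untouched and only $\omega_{(r-1),(r)}(\tilde n)$ remains.

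The main obstacle is not conceptual but bookkeeping: tracking signs, factors of $1/2$, and the various duality identifications in the Heisenberg formulas, especially the exact form of the two trace factors in the $\Hom(\tilde V_{r+1},\tilde V_{(r)})$ case, where one must recognize $\Tr(T_r^*\tilde R^*T_{-r})$ and $\Tr(\tilde R T_r T_{-r}^*)$ as equal up to the cyclicity of the $\k$-linear trace. No genuinely new idea beyond (\ref{eq:action1})--(\ref{eq:action5}) and their recursive application to the smaller pair $(G_{(r)},\tilde G_{(r)})$ is required.
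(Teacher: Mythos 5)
Your proposal is correct and takes essentially the same route as the paper's own proof: there the lemma is established by exactly this direct evaluation of $(\omega_{(r),(r+1)}(\tilde{n})f)_{r}$ at the point $(T_{r},T_{-r}+S)$, using the explicit formulas (\ref{eq:action1})--(\ref{eq:action5}) together with their one-level-down analogues (\ref{eq:sigmaaction1})--(\ref{eq:sigmaaction3}), and the same vanishing identities $\tilde{Z}(T_{-r}+S)=0$, $\tilde{R}(T_{-r}+S)=0$, $\tilde{R}^{\ast}S=0$ that yield the shift $S\mapsto S+\tilde{R}^{\ast}T_{-r}$. One small correction to your wording in the $\tilde{\z}_{r+1}$ case: $T_{r}T_{r}^{\ast}=0$ holds not because $\tilde{V}_{-r-1}$ is disjoint from $\tilde{V}_{r+1}$, but because $T_{r}^{\ast}$ has image in $V_{-r}$ while $T_{r}$ is supported on $V_{r}$ (these graded pieces differ precisely because $r>0$); the conclusion you need is unaffected.
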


\begin{proof} If $\tilde{Z}\in
\tilde{\z}_{r+1}$, we have already seen in equation (\ref{f(r)center}) that
\[
(\omega_{(r),(r+1)}(\exp \tilde{Z})f)_{r}(g)(S)  = f_{r}(g)(S),
\]
while if $\tilde{R} \in \Hom(\tilde{V}_{r+1},\tilde{V}_{(r)}) \subset \tilde{\n}_{r+1}$, similar to equation (\ref{f(r)character}), we have
\begin{eqnarray*}
(\omega_{(r),(r+1)}(\exp \tilde{R})f)_{r}(g)(S)&= &\psi(\Tr \tilde{R}T_{r}(T_{-r}+S)^{\ast})f_{r}(g)(S) \\
                                  &= &\psi(\Tr \tilde{R}T_{r}T_{-r}^{\ast})\psi(\Tr \tilde{R}T_{r}S^{\ast})f_{r}(g)(S).
\end{eqnarray*}
On the other hand, if $\tilde{Z}\in \tilde{\z}_{r}$, then
\begin{eqnarray*}
(\omega_{(r),(r+1)}(\exp \tilde{Z})f)_{r}(g)(S) & = & (\omega_{(r),(r+1)}(\exp \tilde{Z})[\omega_{(r),(r+1)}(g)f])(T_{r},T_{-r}+S) \\
& = & (\omega_{(r),(r)}(\exp \tilde{Z})[(\omega_{(r),(r+1)}(g)f)(T_{r})])(T_{-r}+S)  \\
& = & \omega_{(r-1),(r)}(\exp \tilde{Z})([\omega_{(r),(r+1)}(g)f](T_{r},(\exp \tilde{Z})^{-1}(T_{-r}+S))) \\
& = & \omega_{(r-1),(r)}(\exp \tilde{Z})([\omega_{(r),(r+1)}(g)f](T_{r},T_{-r}+S)) \\
& = & \omega_{(r-1),(r)}(\exp \tilde{Z})f_{r}(g)(S).
\end{eqnarray*}
Here we are using that $\tilde{Z}(T_{-r}+S)=0$ because $\Im (T_{-r}+S)\subset \tilde{V}_{-r+1}\oplus \tilde{V}_{-r}$ and $\tilde{Z}|_{\tilde{V}_{-r+1}\oplus \tilde{V}_{-r}}=0$. On the other hand, if $\tilde{R}\in \Hom(\tilde{V}_{r},\tilde{V}_{(r-1)})\subset \tilde{\n}_{r}$, then, according to equations (\ref{eq:sigmaaction1}), (\ref{eq:sigmaaction2}) and (\ref{eq:sigmaaction3})
\begin{eqnarray*}
(\omega_{(r),(r+1)}(\exp \tilde{R})f)_{r}(g)(S) & = & (\omega_{(r),(r+1)}(\exp \tilde{R})[\omega_{(r),(r+1)}(g)f])(T_{r},T_{-r}+S) \\
& = & (\omega_{(r),(r)}(\exp \tilde{R})[(\omega_{(r),(r+1)}(g)f)(T_{r})])(T_{-r}+S)  \\
& = & \omega_{(r-1),(r)}(\exp \tilde{R})([\omega_{(r),(r+1)}(g)f](T_{r},(\exp \tilde{R})^{-1}(T_{-r}+S))) \\
& = & \omega_{(r-1),(r)}(\exp \tilde{R})([\omega_{(r),(r+1)}(g)f](T_{r},T_{-r}+S+\tilde{R}^{\ast}T_{-r})) \\
& = & \omega_{(r-1),(r)}(\exp \tilde{R})f_{r}(g)(S+\tilde{R}^{\ast}T_{-r}).
\end{eqnarray*}
Here we are using the identification $\Hom(\tilde{V}_{r},\tilde{V}_{(r-1)}) \ni \tilde{R} \leftrightarrow \tilde{R}-\tilde{R}^{\ast}\in \tilde{\n}_{r}$ as in equation (\ref{eq:tildenmisomorphism}). We are also using that, since $T_{-r}\in \Hom(V_{-r},\tilde{V}_{-r+1})$, then $\tilde{R}^{\ast}T_{-r}\in \Hom(V_{-r},\tilde{V}_{-r})$ and that
\[
\tilde{R}^{\ast}S_{-r}=\tilde{R}S_{-r}=\tilde{R}T_{-r}=\tilde{R}^{\ast}\tilde{R}T_{-r}=\tilde{R}^{\ast}\tilde{R}S_{-r}=0,
\]
 for all $\tilde{R}\in \Hom(\tilde{V}_{r},\tilde{V}_{(r-1)})$, $S\in \Hom(V_{-r},\tilde{V}_{-r})$. We note, in particular, that $\tilde{R}^{\ast}T_{-r}=0$ if $\tilde{R}\in \Hom(\tilde{V}_{r},\tilde{V}_{(r-2)}\oplus V_{-r+1})\subset \tilde{\u}_{r}$.
Finally, if $\tilde{n}\in \tilde{N}_{(r-1)}\subset \tilde{G}_{(r)}$, then
\begin{eqnarray*}
(\omega_{(r),(r+1)}(\tilde{n})f)_{r}(g)(S) & = & (\omega_{(r),(r+1)}( \tilde{n})[\omega_{(r),(r+1)}(g)f])(T_{r},T_{-r}+S) \\
& = & (\omega_{(r),(r)}( \tilde{n})[(\omega_{(r),(r+1)}(g)f)(T_{r})])(T_{-r}+S)  \\
& = & \omega_{(r-1),(r)}( \tilde{n})([\omega_{(r),(r+1)}(g)f](T_{r}, \tilde{n}^{-1}(T_{-r}+S))) \\
& = & \omega_{(r-1),(r)}( \tilde{n})([\omega_{(r),(r+1)}(g)f](T_{r},T_{-r}+S)) \\
& = & \omega_{(r-1),(r)}( \tilde{n})f_{r}(g)(S).
\end{eqnarray*}
Here we are using that, considered as an element of $\tilde{G}_{(r)}$, $\tilde{n}\in \tilde{N}_{(r-1)}$ leaves $\tilde{V}_{-r+1}\oplus \tilde{V}_{-r}$ fixed.
\end{proof}

\subsection{From $\Y_{(r),(r+1)}$ to $\HH_{\gamma, \tilde{\gamma}}$: proof of Proposition \ref{prop:tildechicoinvariants}}
\label{subsection:keyprop}
 We are finally ready to complete the proof of the key proposition, to which we refer the reader for the unexplained notation below.

\begin{proof}[Proof (of Proposition \ref{prop:tildechicoinvariants})]
We will prove the result by induction on $r$.

We first look at the case where $r=0$. Observe that in this case $U_{1}=U$, $\tilde{\chi}_{1}=\tilde{\chi}$ and $\z_{1}=\u$. We want to show that the map $f\mapsto f_{(0)}$ induces an isomorphism between $(\Y_{(0),(1)})_{\tilde{U},\tilde{\chi}}$ and $\Sch(G,\HH_{\gamma,\tilde{\gamma}})$.

Let $\lambda\in (\Y'_{(0),(1)})^{\tilde{U},\tilde{\chi}}$. As in the proof of Lemma \ref{lemma:outerlayer}, we identify $\lambda$ with a $\Y'_{(0),(0)}$-valued distribution on $\Hom(V_{(0)},\tilde{V}_{1})$ by setting
\[
 \lambda(f)(v)=\lambda(f\otimes v) \qquad \mbox{ for all $f\in \S_{(0),1}$, $v\in \Y_{(0),(0)}$.}
\]
As in Equation (\ref{eq:tildezaction}) we have that for all $\tilde{Z}\in \u$, $f \in \S_{(0),1}$, $T\in \Hom(V_{(0)},\tilde{V}_{1})$,
\begin{equation}\label{eq:r0action}
 \omega_{(0),(1)}(\exp \tilde{Z}) \lambda =\psi(\Tr \tilde{Z}TT^{\ast}/2)\lambda.
\end{equation}
On the other hand, since $\lambda \in (\Y'_{(0),(1)})^{\tilde{U},\tilde{\chi}}$, we must have
\begin{equation}\label{eq:r0invariance}
 \omega_{(0),(1)}(\exp \tilde{Z})\lambda= \psi(\Tr \tilde{Z}\tilde{X}/2)\lambda.
\end{equation}
Observe that, in contrast with the situation in Lemma \ref{lemma:outerlayer}, $\tilde{X}:\tilde{V}_{-1}\longrightarrow \tilde{V}_{1}$ is a regular value of the map $T\mapsto TT^{\ast}$. Let
\[
 \Orb=\{T\in \Hom(V_{(0)},\tilde{V}_{1})\, | \, TT^{\ast}=\tilde{X}\}.
\]
From Equations (\ref{eq:r0action}), (\ref{eq:r0invariance}) and the above observation it follows that if $\lambda \in (\Y'_{(0),(1)})^{\tilde{U},\tilde{\chi}}$, then $\lambda$ is in fact a distribution that lives on $\Orb$. Since $r=0$, we have an isomorphism $\Y_{(0),(0)}\cong \HH_{\gamma,\tilde{\gamma}}$. On the other hand, if we fix a $T_{(0)}\in \Orb$ we have an isomorphism
\[
 \begin{array}{ccc} G & \cong & \Orb\\
                  g& \mapsto &   g\cdot T_{(0)}.
 \end{array}
\]
From all this it follows that $(\Y_{(0),(1)})_{\tilde{U},\tilde{\chi}}\cong \S^{+}(G;\HH_{\gamma,\tilde{\gamma}})$, where
\[
 \S^{+}(G;\HH_{\gamma,\tilde{\gamma}}):=\{f_{(0)}\, | \, f\in \Y_{(0),(1)}\}.
\]
The result now follows by noting that the growth conditions imposed on functions in $\Sch(G,\HH_{\gamma,\tilde{\gamma}})$ and $\S^{+}(G;\HH_{\gamma,\tilde{\gamma}})$ are in fact equivalent (c.f. the proof of Equation \eqref{eq:SastS}).

For the rest of proof, we assume that $r>0$.
Observe that $\tilde{U}=\tilde{U}_{(r)}\tilde{U}_{r+1}$ and $\tilde{\chi}_{(r+1)}=\tilde{\chi}_{(r)}\tilde{\chi}_{r+1}$. Therefore, from Lemma \ref{lemma:outerlayer}
\begin{eqnarray*}
(\Y_{(r),(r+1)})_{\tilde{U},\tilde{\chi}} & \cong & (\Sch(N_{r}G_{(r-1)}\backslash G;\S_{-r,-r}\otimes \Y_{(r-1),(r)}))_{\tilde{U}_{(r)},\tilde{\chi}_{(r)}} \\
 & \cong & \Sch(N_{r}G_{(r-1)}\backslash G;\S_{-r,-r}\otimes (\Y_{(r-1),(r)})_{\tilde{U}_{(r)},\tilde{\chi}_{(r)}}),
\end{eqnarray*}
where the last equation follows from the fact that $\tilde{U}_{(r)}$ acts only on the values. See equation (\ref{eq:ntildeaction6}).
Now, given $f\in \S_{-r,-r}\otimes \Y_{(r-1),(r)}$, $g\in G$ and $S_{-k}\in \Hom(V_{-k},\tilde{V}_{-k})$, $k=1,\ldots,l$, set
\begin{eqnarray*}
f_{(r-1)}(g)(S_{-r},\ldots,S_{-1})& = & (\bar{\sigma}_{T_{r}}(g)f)(S_{-r},T_{r-1},T_{-r+1}+S_{-r+1},\ldots,T_{0}) \\
& = & (\omega_{(r-1),(r)}(g)[f(S_{-r})])(T_{r-1},T_{-r+1}+S_{-r+1},\ldots,T_{0}).
\end{eqnarray*}
For convenience, we denote the representation $\tau_{\gamma,\tilde{\gamma}}^{T}$ on $\HH_{\gamma,\tilde{\gamma}}$ (defined in equation (\ref{eq:deftaut})) by the more concise symbol $\HH_{T}$.  Then, by induction hypothesis, the map $f\mapsto f_{(r-1)}$ induces
a $G_{(r-1)}$-intertwining isomorphism
\[
\Psi_{(r-1)}: \S_{-r,-r}\otimes
(\Y_{(r-1),(r)})_{\tilde{U}_{(r)},\tilde{\chi}_{(r)}}
\longrightarrow \Sch(N_{(r-1)}\backslash
G_{(r-1)}; \HH_{T_{(r)}}),
\]
where $T_{(l)}=\oplus_{k=-l}^{l} T_{k}$, for $l=0,\ldots,r$. Note that $T_{(r)}=T_{\gamma,\tilde{\gamma}}$. Also note that in the above equation we are identifying the spaces $\S_{-r,-r}\otimes \Sch(N_{(r-1)}\backslash
G_{(r-1)}; \HH_{T_{(r-1)}}) \cong \Sch(N_{(r-1)}\backslash
G_{(r-1)}; \HH_{T_{(r)}})$. We claim that $\Psi_{(r-1)}$ is actually an isomorphism of $G_{(r-1)}N_{r}$-modules:
\begin{equation}
\Psi_{(r-1)}: \S_{-r,-r}\otimes
(\Y_{(r-1),(r)})_{\tilde{U}_{(r)},\tilde{\chi}_{(r)}}
\longrightarrow  \Sch(N\backslash G_{(r-1)}N_{r}; \HH_{T_{(r)}}),
\label{eq:GNr1isomorphism}
\end{equation}
where
\[
\Sch(N\backslash G_{(r-1)}N_{r};
\HH_{T_{(r)}})=\{f \in C^{\infty}(N\backslash G_{(r-1)}N_{r};
\HH_{T_{(r)}})\,
| \, f|_{G_{(r-1)}}\in
\Sch(N_{(r-1)}\backslash G_{(r-1)}; \HH_{T_{(r-1)}}). \}
\]
Observe that if $f\in \Sch(N\backslash G_{(r-1)}N_{r};
\HH_{T_{(r)}})$, $g\in G_{(r-1)}$ and $n\in N_{r}$, then
\[
(n\cdot f)(g)=f(gn)=f((gng^{-1})g)=\tau_{T_{(r)}}(gng^{-1})f(g).
\]
Therefore, to prove our claim, we need to show that for all $f\in \S_{-r,-r}\otimes \Y_{(r-1),(r)}$, $g\in G_{(r-1)}$, $n\in N_{r}$,
\begin{equation}
(\bar{\sigma}_{T_{r}}(n)f)_{(r-1)}(g)=\tau_{T_{(r)}}(gng^{-1})f_{(r-1)}(g). \label{eq:compatibilitysigmatau}
\end{equation}
Now, by definition,
\begin{eqnarray*}
(\bar{\sigma}_{T_{r}}(n)f)_{(r-1)}(g)(S_{-r},\ldots,S_{-1}) & = &  (\bar{\sigma}_{T_{r}}(g)\bar{\sigma}_{T_{r}}(n)f)(S_{-r},T_{r-1},T_{-r+1}+S_{-r+1},\ldots,T_{0}) \\
 & = &  (\bar{\sigma}_{T_{r}}(gng^{-1})[\bar{\sigma}_{T_{r}}(g)f])(S_{-r},T_{r-1},T_{-r+1}+S_{-r+1},\ldots,T_{0}) \\
 & = &  (\bar{\sigma}_{T_{r}}(gng^{-1})[\bar{\sigma}_{T_{r}}(g)f])_{(r-1)}(e)(S_{-r},S_{-r+1},\ldots,S_{-1}).
\end{eqnarray*}
From all this we see that in order to prove the claim given in equation (\ref{eq:GNr1isomorphism}) it suffices to show that for all $f\in \S_{-r,-r}\otimes \Y_{(r-1),(r)}$, $n\in N_{r}$,
\begin{equation}
(\bar{\sigma}_{T_{r}}(n)f)_{(r-1)}(e)=\tau_{T_{(r)}}(n)f_{(r-1)}(e). \label{eq:simplecompatibilitysigmatau}
\end{equation}
Now, by equation (\ref{eq:barsimgaTr1}), if $Z\in \z_{r}$, then
\[
(\bar{\sigma}_{T_{r}}(\exp Z)f)_{(r-1)}(e)(S_{-r},\dots,S_{-1})=f(S_{-r},T_{r-1},\ldots,T_{0})=f_{(r-1)}(e)(S_{-r},\dots,S_{-1}).
\]
On the other hand, by equation (\ref{eq:barsimgaTr2}), if $R\in \Hom(V_{(r-2)}\oplus V_{r-1},V_{-r})\subset \u_{r}$, then
\begin{eqnarray*}
(\bar{\sigma}_{T_{r}}(\exp R)f)_{(r-1)}(e)(S_{-r},\dots,S_{-1}) & = & (\omega_{(r-1),(r)}(T_{-r}R)[f(S_{-r})])(T_{r-1},\ldots,T_{0}) \\
& = & \psi(-\Tr T_{-r}RT_{r-2}^{\ast})f(S_{-r},T_{r-1},\ldots,T_{0}) \\
& = & \chi_{\gamma}(\exp R)^{-1}f_{(r-1)}(e)(S_{-r},\ldots,S_{-1}),
\end{eqnarray*}
in other words, for all $u\in U_{r}$,
\[
(\bar{\sigma}_{T_{r}}(u)f)_{(r-1)}(e)(S_{-r},\dots,S_{-1})=\chi_{\gamma}(u)^{-1}f_{(r-1)}(e)(S_{-r},\ldots,S_{-1}).
\]
Finally, again by equation (\ref{eq:barsimgaTr2}), if $R\in \g_{-1}\cap \n_{r}\cong \Hom(V_{-r+1},V_{-r})$, then
\begin{eqnarray*}
(\bar{\sigma}_{T_{r}}(\exp R)f)_{(r-1)}(e)(S_{-r},\dots,S_{-1}) & = & (\omega_{(r-1),(r)}(T_{-r}R+S_{-r}R)[f(S_{-r})](T_{r-1},\ldots,T_{0}) \\
 & = & \psi(-\Tr S_{-r}RT^{\ast}_{r-1})f_{(r-1)}(e)(S_{-r},S_{-r+1}+T_{-r}R,\ldots,S_{-1}).
\end{eqnarray*}
Now, for such an $R$, $\sigma_{T_{(r)}}(R,0)=T_{-r}R-T_{r-1}R^{\ast}$. Therefore,
\begin{eqnarray*}
\tau_{T_{(r)}}(\exp R)f_{(r-1)}(e)(S_{-r},\ldots,S_{-1}) & = &\tau_{\gamma,\tilde{\gamma}}(T_{-r}R-T_{r-1}R^{\ast})f_{(r-1)}(e)(S_{-r},\ldots, S_{-1}) \\
 & = & \psi(-\Tr S_{-r}RT^{\ast}_{r-1})f_{(r-1)}(e)(S_{-r},S_{-r+1}+T_{-r}R,\ldots,S_{-1}).
\end{eqnarray*}
Therefore, equation (\ref{eq:simplecompatibilitysigmatau}) holds true, and hence we have proved our claim given in equation (\ref{eq:GNr1isomorphism}).

We will now show that
\[
\Sch(N\backslash G; \HH_{T_{(r)}})\cong \Sch(N_{r} G_{(r-1)}\backslash G;\Sch(N\backslash G_{(r-1)}N_{r};\HH_{T_{(r)}})).
\]
Given $f\in  \Sch(N_{r} G_{(r-1)}\backslash G;\Sch(N\backslash G_{(r-1)}N_{r};\HH_{T_{(r)}}))$, set $\check{f}(g)=f(g)(e)$, for all $g\in G$. Then it is clear that $\check{f}\in C^{\infty}(N\backslash G; \HH_{T_{(r)}})$, but we claim that $\check{f}$ is actually in $\Sch(N\backslash G;\HH_{T_{(r)}})$. Effectively, given a semi-norm $\rho$ of $\HH_{T_{(r)}}$, $Z\in U(\g)$ and $d\in \N$,
\begin{eqnarray*}
q_{Z,d,\rho}(\check{f})&= &\sup_{k\in K, \, m_{1}\in M_{r}, \, m_{2}\in G_{(r-1)}} \rho(R_{Z}\check{f}(m_{2} mk_{1} )) \|m_{1}m_{2}\|^{d}\\
             &= & \sup_{k,m_1,m_2}\rho(R_{Z}f(m_{2} mk_{1} )(e)) \|m_{1}m_{2}\|^{d}  \\
                            & \leq & \sup_{k,m_1,m_2}\rho(R_{Z}f(mk_{1})(m_{2}^{-1})) \|m_{1}\|^{d}\|m_{2}\|^{d}    \\
            & = & p_{Z,1,d,d,\rho}(f)<\infty.
\end{eqnarray*}
Now given $f\in \Sch(N\backslash G; \HH_{T_{(r)}})$, set $\hat{f}(g)(h)=f(hg)$. Then again it is clear that
\[
\hat{f} \in C^{\infty}(N_{r} G_{(r-1)}\backslash G;C^{\infty}(N\backslash N_{r}G_{(r-1)};\HH_{T_{(r)}})),
\]
 but we claim that, actually, $\hat{f}$ is a function in $ \Sch(N_{r} G_{(r-1)}\backslash G;\Sch(N\backslash G_{(r-1)}N_{r};\HH_{T_{(r)}}))$. Effectively, let $\rho$ be a semi-norm of $\HH_{T_{(r)}}$, $Z_{1}\in U(\g)$, $Z_{2}\in U(\g_{(r-1)})$ and $d_{1}$, $d_{2}\in \N$. Then $Z_{2}\in U^{m}(\g)$ for some $m$ and hence there exists a basis $\{Y_{j}\}_{j=1}^{l}$ of $U^{m}(\g)$ such that for all $g\in G$, $\Ad(g) Z_{2}=\sum_{j} a_{j}(g)Y_{j}$ for some functions $a_{j}$. Since $(\Ad,U^{m}(\g))$ is a finite dimensional representation, there exists a constant $d_{m}$ such that $|a(g)|\leq \|g\|^{d_{m}}$ for all $g\in G$. Taking this into account we have that
\begin{eqnarray*}
p_{Z_{1},Z_{2},d_{1},d_{2},\rho}(\hat{f})&= &\sup_{k\in K, \, m_{1}\in M_{r}, \, m_{2}\in G_{(r-1)}} \rho(R_{Z_{2}}(R_{Z_{1}}\hat{f}(mk_{1}))( m_{2})) \|m_{1}\|^{d_{1}}\|m_{2}\|^{d_{2}}\\
             &= & \sup_{k,m_1,m_2}\rho(R_{\Ad(mk_{1})^{-1}Z_{2}}R_{Z_{1}}f(m_{2} mk_{1} )) \|m_{1}\|^{d_{1}}\|m_{2}\|^{d_{2}}\\
                            & \leq & \sup_{k,m_1,m_2}\sum_{j=1}^{l}\rho(a(mk_{1})R_{Y_{j}}R_{Z_{1}}f(mk_{1}m_{2})) \|m_{1}\|^{d_{1}}\|m_{2}\|^{d_{2}}   \\
            & \leq & \sup_{k,m_1,m_2}\sum_{j=1}^{l}\rho(R_{Y_{j}}R_{Z_{1}}f(mk_{1}m_{2})) \|m_{1}\|^{d_{1}+d_{m}}\|m_{2}\|^{d_{2}}  \\
& \leq & \sup_{k,m_1,m_2}\sum_{j=1}^{l}\rho(R_{Y_{j}Z_{1}}f(mk_{1}m_{2})) \|m_{1}m_{2}\|^{d_{1}+d_{m}+d_{2}}    \\
& = & \sum_{j=1}^{l} q_{Y_{j}Z_{1},d_{1}+d_{2}+d_{m},\rho}(f)<\infty.
\end{eqnarray*}
Here we have used the fact that $\|m_{1}m_{2}\|=\max\{\|m_{1}\|,\|m_{2}\|\}\geq \|m_{1}\|,\|m_{2}\|$.
Now the only thing left to be check is that $\hat{\check{f}}=f$ and $\check{\hat{f}}=f$, but by definition
\[
\hat{\check{f}}(g)(h)=\check{f}(gh)=f(gh)(e)=f(g)(h),
\]
and similarly $\check{\hat{f}}=f$.
\end{proof}

\appendix
\section{Some facts on vector valued distributions}
\label{sec:vectordistributions}

In this appendix, we collect some elementary facts about vector valued distributions. For a more detailed presentation of the material, the interested reader should refer to \cite{KV96}, on  which this appendix is based.
\vsp

\subsection{Vector valued distributions}
Let $\X$ be a $C^{\infty}$-manifold and $\V$ a Fr\'echet space. We write $C^{\infty}(\X;\V)$ and $C_{c}^{\infty}(\X;\V)$ for the topological vector spaces of the $C^{\infty}$ mappings from $\X$ to $\V$, and those with compact support, respectively. Let $\V'$ be the dual of $\V$ provided with the strong dual topology, that is, the topology of uniform convergence on bounded subsets of $\V$. Finally, let $\Diff^{(l)}(\X)$ be the $C^{\infty}(\X)$-module of $C^{\infty}$ differential operators on $X$ of order $\leq l\in \Z_{\geq 0}$.

\begin{dfn} A $\V$-distribution on $\X$ is an element of the topological vector space
\[
D'(\X;\V):=C_{c}^{\infty}(\X;\V)'\cong (C_{c}^{\infty}(\X)\otimes \V)'
\]
consisting of the continuous linear forms on $C_{c}^{\infty}(\X;\V)$, provided with the strong dual topology. A $\V'$-valued distribution on $\X$ is an element of the space $\Lin(C_{c}^{\infty}(\X);\V')$ of continuous linear maps from $C_{c}^{\infty}(\X)$ to $\V'$. We will also equip $\Lin(C_{c}^{\infty}(\X);\V')$ with the strong dual topology.
\end{dfn}

A $\V$ distribution $\lambda$ defines canonically a $\V'$-valued distribution $\tilde{\lambda}$ on $\X$, that is, we have a canonical isomorphism
\[
\lambda\longleftrightarrow \tilde{\lambda}  \qquad D'(\X;\V) \longleftrightarrow \Lin(C_{c}^{\infty}(\X);\V'),
\]
given as follows: if $\lambda\in D'(\X;\V)$ and $f\in C_{c}^{\infty}(\X)$, then $\tilde{\lambda}(f)\in \V'$ is well-defined by $\tilde{\lambda}(f)(v)=\lambda(f\otimes v)$, for all $v\in \V$. In what follows we will frequently identify these two spaces.

\vsp

\subsection{Transverse order of a $\V$-distribution}

\begin{dfn} A $\V$-distribution $\lambda\in D'(\X;\V)$ is said to be of order $\leq l$ if for every compact set $\mathcal{K}\subset \X$ there exists a constant $c>0$, $Z_{1},\ldots, Z_{m}\in \Diff^{(l)}(\X)$ and a seminorm $\nu$ on $\V$ such that for all $f\in C_{c}^{\infty}(\X;\V)$
\[
|\lambda(f)|\leq c \sum_{j=1}^{m} \sup_{x\in \mathcal{K}} \nu(Z_{j}f(x)).
\]
We write $D'^{(l)}(\X;\V)$ for the linear subspace of $D'(\X;\V)$ consisting of elements of order $\leq l$.
\end{dfn}

It can be shown that every $\V$-distribution on $\X$ is of finite order over every relatively compact open subset of $\X$.

Suppose $\mathcal{C}$ is a closed $C^{\infty}$ submanifold that is regularly embedded in $\X$. We write $D_{\mathcal{C}}'(\X;\V)$ for the distributions in $D'(\X;\V)$ with support contained in $\mathcal{C}$.

\begin{dfn} A $\V$-distribution $\lambda\in D'(\X;\V)$ is said to have transverse order $\leq l$ at $x\in \mathcal {C}$ if the following holds: there exists an open neighborhood $U_{x}$ of $x$ in $\X$ such that
\[
\lambda(f)=0, \qquad \begin{array}{c} \mbox{if $f\in C_{c}^{\infty}(U_{x};\V)$ satisfies $Zf|_{\mathcal{C}\cap   U_{x}}=0$}\\
                                                      \mbox{for all $Z\in \Diff^{(l)}(U_{x})$}. \end{array}
\]
\end{dfn}

Denote by $D_{\mathcal{C}}'^{(l)}(\X;\V)$ the linear subspace of elements in $D'(\X;\V)$ which have transverse order $\leq l$ for all points of $\mathcal{C}$. Note that $\supp(\lambda) \subseteq \mathcal{C}$, for $\lambda \in D_{\mathcal{C}}'^{(l)}(\X;\V)$, justifying the notation.

We say that $\lambda$ \emph{lives on} $\mathcal{C}$ if $\lambda$ belongs to $D'(\mathcal{C};\V):=D'^{(0)}_{\mathcal{C}}(\X;\V)$. This is the same as saying that $\lambda(f)=0$ if $f\in C_{c}^{\infty}(\X;\V)$ satisfies $f|_{\mathcal{C}}=0$. In this case there exists a unique distribution $\mu$ on $\mathcal{C}$ such that $\lambda(f)=\mu(f|_{\mathcal{C}})$, for all $f\in C_{c}^{\infty}(\X;\V)$; and often we will identify $\lambda$ with $\mu$.

\vsp

\subsection{Transverse jet bundle}\label{subsec:transversebundle}
As before, let $\X$ be a $C^{\infty}$-manifold, and let $\mathcal{C}\subset \X$ be a closed $C^{\infty}$-manifold regularly embedded in $\X$. For any $x\in \mathcal{C}$, let $C^{\infty}_{x}$ be the algebra of germs of $C^{\infty}$ functions defined around $x$ and $\Diff_{x}^{(l)}$ the $C^{\infty}_{x}$-module of germs of differential operators of order $\leq l$ defined around $x$. Let $V_{x}^{(l)}$ be the $C^{\infty}_{x}$-submodule of $\Diff_{x}^{(l)}$ generated by germs of $l$-tuples $(v_{1},\ldots,v_{l})$ of vector fields around $x$ for which at least one of the $v_{i}$'s is tangent to $\mathcal{C}$; let $I_{x}^{(l)}=\Diff_{x}^{(l-1)}+V_{x}^{(l)}$. By choosing local coordinates at $x$, it is easy to see that $I_{x}^{(l)}$ actually is the stalk at $x$ of a subsheaf $I^{(l)}$ of the sheaf $\Diff^{(l)}$ of differential operators of order $\leq l$ on $\X$. Hence we have a well-defined quotient sheaf
\[
M^{(l)}=\Diff^{(l)}/I^{(l)},
\]
with stalk at $x$ equal to $M_{x}^{(l)}=\Diff_{x}^{(r)}/I_{x}^{(l)}$. We write $\partial \mapsto \overline{\partial}$ for the projection  $\Diff_{x}^{(l)}\longrightarrow M_{x}^{(l)}$. Since $\Diff_{x}^{(l)}$ and $I_{x}^{(l)}$ are both $C^{\infty}_{x}$-modules, $M_{x}^{(l)}$ also has the structure of a $C^{\infty}_{x}$-module, and the projection is a mapping of $C^{\infty}_{x}$-modules.

Given $x\in \mathcal{C}$ we can select an open neighborhood $U_{x}$ of $X$ and local coordinates $(t:u)=(t_{1},\ldots,t_{p},u_{1},\ldots,u_{q})$ on $U_{x}$ such that
\[
U_{x}\cap \mathcal{C}=\{(t:u)\, | \, t_{1}=\ldots=t_{p}=0\}.
\]
It is easy to show that the elements $\overline{\partial^{\alpha}_{t}}$ ($|\alpha|=l$) form a free basis for the sections of $M^{(l)}$ around $x$. Here $\alpha=(\alpha_{1},\ldots,\alpha_{p})$ is a multi-index, $|\alpha|=\alpha_{1}+\cdots+ \alpha_{p}$ and $\partial^{\alpha}_{t}=\partial^{|\alpha|}/\partial_{t_{1}}^{\alpha_{1}}\cdots\partial_{t_{p}}^{\alpha_{p}}.$ Thus $M^{(l)}$ is a vector bundle over $\mathcal{C}$ of finite rank. This is the $l$-th graded part of the transverse jet bundle on $\mathcal{C}$. We have that $M^{(l)}$ is the $l$-th symmetric power of $M^{(1)}$. We write $M^{(l)'}$ for the dual to $M^{(l)}$.

\end{document}